\documentclass{amsart}

\usepackage{xcolor}
\usepackage{mathtools}
\usepackage{amsmath}
\usepackage{amsthm}
\usepackage{graphicx}
\usepackage{amssymb}
\usepackage{epstopdf}
\usepackage{nicefrac}
\usepackage{mathrsfs}
\usepackage{hyperref}
\usepackage{enumitem}
\usepackage{tikz}
\usepackage{caption}
\usepackage{subcaption}
\usepackage{cite}
\usetikzlibrary{calc,arrows.meta,decorations.pathreplacing}

\newcommand{\R}{\mathbb R}

\theoremstyle{plain}
\newtheorem{theo}{Theorem}[section]
\newtheorem{lm}[theo]{Lemma}
\newtheorem{cor}[theo]{Corollary}
\newtheorem{prop}[theo]{Proposition}

\newtheorem{remark}{Remark}

\newtheorem*{example*}{Example}

\DeclareMathOperator{\conv}{conv}

\DeclareMathOperator{\Ha}{H}

\DeclareMathOperator{\PW}{PW}
\DeclareMathOperator{\supp}{supp}

\DeclareMathOperator{\Real}{Re}

\DeclareMathOperator{\Image}{Im}
\DeclareMathOperator{\cone}{cone}

\title{Boundary-Weighted Fourier Inequalities for Convex Domains}
\author[K. Bampouras]{Konstantinos Bampouras}
\address{Department of Mathematics and Systems Analysis, School of Science, Aalto University, Espoo, Finland}
\email{konstantinos.bampouras@aalto.fi}

\author[K.-M. Perfekt]{Karl-Mikael Perfekt}
\address{Department of Mathematical Sciences, Norwegian University of Science and Technology (NTNU), 7491 Trondheim, Norway}
\email{karl-mikael.perfekt@ntnu.no}

\date{\today}

\begin{document}
	
	\begin{abstract}
		We consider the natural family of Fourier inequalities for the Paley--Wiener space $\PW^q(\Omega)$, consisting of $L^q$-functions with Fourier support in a convex set $\Omega \subset \mathbb{R}^n$, $n \geq 2$, free of affine lines. Namely, 
		\begin{equation*}
			\int_{\Omega}\dfrac{|\hat{f}(x)|^p}{\omega_{\Omega}^d(x)}dx\leq C\|f\|_{L^q}^p,\quad f\in \PW^q(\Omega).
		\end{equation*} 
		Here $\hat{f}$ is the Fourier transform of $f$, $1 \leq p, q < \infty$, $d \in \R$, and $\omega_\Omega$ is the  frequency multiplier weight associated with the Paley--Wiener space of $\Omega$,
		\[
		\omega_{\Omega}(x)=m(\Omega\cap (2x-\Omega)), \qquad x \in \Omega.
		\]
		For an arbitrary polyhedron $P$, we completely characterize the triples $(p,q,d)$ which yield valid Fourier inequalities. For a ball $B$, we characterize the valid triples when $p \geq 2$. When $p < 2$, the situation is different for the ball, and natural critical inequalities fail. However, we show that the spherical restriction conjecture implies a family of subcritical Fourier inequalities for the ball, which in turn imply the Kakeya conjecture (in its Minkowski-form). Finally, we link our family of Fourier inequalities to the theory of truncated Hankel operators acting on the Paley--Wiener space of $\Omega$.
	\end{abstract}
	
	\maketitle

	\tableofcontents

	\section{Introduction and main results}
	
	The purpose of this article is to study Fourier inequalities for Paley--Wiener spaces 
	\[
	\PW^q(\Omega) = \{f \in L^q(\mathbb{R}^n) \, : \, \supp \hat{f} \subset \overline{\Omega}\}, \qquad 1 \leq q < \infty,
	\]
	consisting of functions with (distributional) Fourier support in some convex set $\overline{\Omega} \subset \R^n$, $n \geq 2$. Our theory treats open convex sets that do not contain any affine lines, and $\Omega$ will henceforth denote such a set. 
	
	The natural frequency multiplier to consider for Fourier inequalities on $\PW^q(\Omega)$ has been identified in \cite{bampouras2024besovspacesschattenclass}, in connection with the construction of Besov spaces $B^s_{p,q}(\Omega)$ of Paley--Wiener type. Namely, let
	\[
	\omega_{\Omega}(x)=m(\Omega\cap (2x-\Omega)), \qquad x \in \Omega,
	\]
	finite because $\Omega$ is free of lines. Powers $\omega_\Omega^{-s}$ of this Fourier weight play a similar role to the Fourier multiplier weights defining the Riesz and Bessel potentials in the classical theory; this is evidenced in particular by the fact that the corresponding scale of Besov spaces describes the Schatten classes for truncated operators of convolution-type on $\PW^2(\Omega)$, see \cite{bampouras2024besovspacesschattenclass}.
	
	The Fourier inequalities that we will consider are of the form
	\begin{equation}\label{hardy} 
		\int_{\Omega}\dfrac{|\hat{f}(x)|^p}{\omega_{\Omega}^d(x)}dx\leq C\|f\|_{L^q}^p,\quad f\in \PW^q(\Omega),
	\end{equation} 
	where $1 \leq p, q < \infty$ and $d \in \R$. We will refer to inequality \eqref{hardy} as the $(p,q,d)$ boundary-weighted Fourier inequality for $\Omega$. Rather unusually, we denote the Fourier-variable by $x \in \R^n$, to emphasize that our analysis will be almost entirely based on real-variable techniques on the Fourier side. 
	
	Two classical inequalities motivate important endpoints of this scale of boundary-weighted Fourier inequalities.
	The triple $(p,q,d)=(2,1,1)$ yields what we call Helson's inequality,
	\begin{equation} \label{eq:helsonineq}
		\int_\Omega \dfrac{|\hat{f}(x)|^2}{\omega_{\Omega}(x)}dx\leq C\|f\|_{L^1}^2, \quad f\in\PW^{1}(\Omega).
	\end{equation}
	When $n = 1$ and $\Omega = \R_+ = (0,\infty)$, the space $\PW^q(\R_+)$ is the usual analytic Hardy space $H^q$ of the upper half plane. In this case, \eqref{eq:helsonineq} with $C = 1/2$ is a classical inequality due to Carleman, which presents a strengthening of the isoperimetric inequality \cite{MR1984405}. When $\Omega = \R_+^n$, inequality \eqref{eq:helsonineq} was established by H. Helson \cite{MR2263964}, in order to show that every several variable Hankel operator of Hilbert--Schmidt class is generated by a bounded symbol, and that the corresponding symbol assignment is contractive.  The other prototypical endpoint is the $(1,1,1)$ Fourier inequality,
	\begin{equation} \label{eq:hardyineq}
		\int_\Omega \dfrac{|\hat{f}(x)|}{\omega_{\Omega}(x)}dx \leq C\|f\|_{L^1}, \quad f\in\PW^{1}(\Omega),
	\end{equation}
	which, when valid, is the Paley--Wiener analogue of Hardy's inequality for the analytic Hardy space $H^1$. Recall that Hardy's inequality for $H^1$ is the estimate
	\[
	\int_{\R_+} \frac{|\hat{f}(x)|}{x} \, dx \leq \pi \|f\|_{L^1}, \qquad f \in \PW^1(\R_+) = H^1.
	\]
	It is important not to confuse our type of Hardy inequality \eqref{eq:hardyineq} with Hardy inequalities for Sobolev spaces $W^{1,p}(\Omega)$. In our context, the functions $f$ have \textit{Fourier support} in $\Omega$. In the classical case, $n=1$, the Hardy and Helson inequalities are closely connected, essentially due to the availability of Nehari's theorem, see Lemma~\ref{lem:hardyineqandhilbert}. We will soon see that these endpoint inequalities can behave differently in higher dimensions.
	
	We first record a set of necessary conditions on the triple $(p,q,d)$, valid for every set $\Omega$. As usual, $p'$ and $q'$ denote the dual indices of $p$ and $q$, $\frac{1}{p} + \frac{1}{p'} = 1$, $\frac{1}{q} + \frac{1}{q'} = 1$.
	\begin{theo}
		\label{thm:intro-necessary}
		If the $(p,q,d)$ boundary-weighted Fourier inequality holds for $\Omega$, then
		\[
		q\leq2,\qquad p\leq q',
		\qquad d\leq d_c(p,q) = 1+\frac pq-p.
		\]
		When $1 \leq p < q < 2$, it is necessary that
		\begin{equation} \label{eq:defect}
			d < d_c(p,q).
		\end{equation}
		Furthermore, if \(\Omega\) is unbounded, then it must be that $p \geq q$ and $d=d_c(p,q)$.
	\end{theo}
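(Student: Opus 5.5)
We prove each condition by testing \eqref{hardy} against explicit functions whose Fourier transforms are smooth bumps adapted to convex pieces of $\Omega$ on which $\omega_\Omega$ is essentially constant. The basic localization fact is this. For $x\in\Omega$ set $K_x=\Omega\cap(2x-\Omega)$, a convex body symmetric about $x$ with $m(K_x)=\omega_\Omega(x)$. For $y\in x+\tfrac14(K_x-x)$, the set $y+\tfrac12(K_x-x)$ is symmetric about $y$ and, by convexity and symmetry of $K_x$, contained in $K_x\subset\Omega$. Hence $K_y\supset y+\tfrac12(K_x-x)$, so $\omega_\Omega(y)\gtrsim\omega_\Omega(x)$. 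The reverse inequality $\omega_\Omega(y)\lesssim\omega_\Omega(x)$ should follow similarly by applying the same containment with the roles of $x$ and $y$ exchanged, up to a dimensional constant. Granting this, let $A_x$ be an affine map sending the unit ball to the John ellipsoid of $x+\tfrac14(K_x-x)$, so $|\det A_x|\sim\omega_\Omega(x)$. Fix $\psi\in C_c^\infty(B(0,1))$ and set $\hat f_x=\psi\circ A_x^{-1}$. A change of variables gives $\|f_x\|_{L^q}\sim\omega_\Omega(x)^{1-1/q}$, while the left side of \eqref{hardy} is $\sim\omega_\Omega(x)^{1-d}$. Thus \eqref{hardy} forces
\[
\omega_\Omega(x)^{1-d}\lesssim\omega_\Omega(x)^{p-p/q}.
\]
Since $\omega_\Omega(x)\to0$ as $x\to\partial\Omega$, we get $d\le d_c(p,q)$. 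If $\Omega$ is unbounded, then $\omega_\Omega$ is also unbounded: taking $x_0+tv$ with $v$ in the recession cone, $K_{x_0+tv}$ contains ever larger balls. Letting $\omega_\Omega\to\infty$ gives the opposite inequality, hence $d=d_c$.

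For $q\le2$ and $p\le q'$ we localize to a fixed small ball $B\Subset\Omega$, where $\omega_\Omega\sim1$. Then \eqref{hardy} becomes a reverse Hausdorff--Young type estimate $\|\hat f\|_{L^p(B)}\lesssim\|f\|_{L^q}$ for $\supp\hat f\subset B$.
\begin{itemize}
\item For $p\le q'$, take $\hat f(\xi)=\psi(R(\xi-x_0))$ with $R\to\infty$. Then $\|\hat f\|_p\sim R^{-n/p}$ and $\|f\|_q\sim R^{-n/q'}$, so $R^{-n/p}\lesssim R^{-n/q'}$ for large $R$ forces $p\le q'$.
\item For $q\le 2$, take $f=\sum_{j=1}^N\varepsilon_j\phi(\cdot-a_j)$, with $\hat\phi$ supported in $B$, widely separated $a_j$, and random signs $\varepsilon_j$. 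Then $\|f\|_q\sim N^{1/q}$. Averaging over signs and using Khintchine, $\mathbb E\|\hat f\|_{L^p(B)}^p\sim N^{p/2}$. Hence $N^{1/2}\lesssim N^{1/q}$, which gives $q\le2$.
\end{itemize}

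For $p\ge q$ in the unbounded case, and for the strict inequality \eqref{eq:defect} when $p<q<2$, we use a superposition argument at $d=d_c$. Suppose we have points $x_1,\dots,x_N$ whose quarter-bodies $x_j+\tfrac14(K_{x_j}-x_j)$ are pairwise disjoint. Let $f_j$ be the functions $f_{x_j}$ above, normalized so that $\|f_j\|_q=1$, and translated far apart in physical space. Then $f=\sum_j f_j$ satisfies $\|f\|_q^q\approx N$. The Fourier supports are disjoint, so the left side of \eqref{hardy} is additive. Each term is $\sim\omega_\Omega(x_j)^{1-d_c-p+p/q}=1$, regardless of the size of $\omega_\Omega(x_j)$. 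Hence $N\lesssim N^{p/q}$, impossible when $p<q$. In the unbounded case $d=d_c$ is already forced, so this gives $p\ge q$. In general it shows that $d=d_c$ fails when $p<q$, which yields \eqref{eq:defect}.

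The main obstacle is producing infinitely many disjoint quarter-bodies. Near a boundary point $z$, pick a supporting half-space $\{\langle y-z,\nu\rangle\ge0\}\supset\Omega$ and an interior point $x_0$, and let $x_j=z+4^{-j}(x_0-z)$. Write $c=\langle x_0-z,\nu\rangle>0$ and $s_j=4^{-j}c$. Since $K_{x_j}\subset 2x_j-\Omega$, every point of $K_{x_j}$ has height $\langle y-z,\nu\rangle\in[0,2s_j]$. Every point of the quarter-body then has height in $[\tfrac34 s_j,\tfrac54 s_j]$. These slabs are pairwise disjoint for successive $j$, which gives the required disjointness. This construction works for any $\Omega$, bounded or not.
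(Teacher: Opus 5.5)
Your proof is correct, and it reaches the theorem by a more elementary route than the paper. The paper assembles the statement from Lemma~\ref{bestd}, Theorem~\ref{plessqd=dc} and the lemma excluding $q>2$:
its test functions are the pieces $\phi_j^i$ of a Fourier partition of unity subordinate to the admissible cover of Theorem~\ref{admissibility}, built from John's theorem on Macbeath regions plus a maximal packing, with $L^q$ norms controlled by Lemma~\ref{estim};
$q\le 2$ comes from many tiny disjoint frequency bumps, with Khintchine applied on the space side;
and the obstruction for $p<q$ uses random signs, Khintchine and the bound $(\sum|\psi_j^i|^2)^{q/2}\le\sum|\psi_j^i|^q$, which needs $q\le 2$. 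The paper then reads off $p\ge q$ for unbounded $\Omega$ from ``$\Omega$ must be bounded'', together with Lemma~\ref{lem:qis2} for $q=2$.

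You do it differently:
you use a single explicit bump on the John ellipsoid of a quarter Macbeath region, with both sides computed exactly by scaling;
you prove $q\le 2$ dually, by random translates and Khintchine on the Fourier side;
you decouple the $L^q$ norms in the superposition by translating in physical space, so no restriction on $q$ enters there;
and you get infinitely many disjoint pieces from the chain $x_j=z+4^{-j}(x_0-z)$ and the slab bound, so $p\ge q$ in the unbounded case follows at once from $d=d_c$.
This is self-contained and bypasses the admissibility machinery. What the paper's route buys is access to \emph{all} pieces at a given scale. The count $\#I_j\approx a^{(1-n)j/(n+1)}$ is exactly what produces the quantitative defect $d\le d_c-\frac{(n-1)(q-p)}{(n+1)q}$ for domains of positive affine surface area, which a single chain of disjoint bodies cannot detect.

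A few small repairs are needed.

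First, the claim that $K_{x_0+tv}$ contains ever larger balls is false in general: in the half-strip $(0,\infty)\times(0,1)$ every Macbeath region has height at most $1$. What is true is $K_{x_0+tv}\supset x_0+[0,2t]v+B(0,r)$ whenever $B(x_0,r)\subset\Omega$, which gives $\omega_\Omega(x_0+tv)\gtrsim t$.

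Second, the reverse comparability is not literally the same containment with roles exchanged, since $x$ need not lie in $y+\frac14(K_y-y)$. But $x-y\in\frac14(K_x-x)\subset\frac12(K_y-y)$ gives $x+\frac12(K_y-y)\subset K_y$. Hence $K_x\supset x+\frac12(K_y-y)$ and $\omega_\Omega(x)\ge 2^{-n}\omega_\Omega(y)$; alternatively, use Lemma~\ref{easy}.

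Third, you use $\inf_\Omega\omega_\Omega=0$ without proof, as the paper does implicitly. Along your chain it holds for the following reason. If $\omega_\Omega(x_j)\ge c>0$ infinitely often, the symmetric bodies $K_{x_j}-x_j$, which lie in slabs of width $2s_j\to0$, must contain points $w_j$ with $|w_j|\to\infty$. The segments $[x_j-w_j,x_j+w_j]\subset\Omega$ then produce in the limit a line through $z$ in $\overline\Omega$, hence a line in $\Omega$.

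Finally, $\|f\|_q^q\approx N$ for your translated sums should be read as follows: for fixed $N$ the translations can be taken so large that $\|f\|_q^q\le 2\sum_j\|f_j\|_q^q$. Only this upper bound is used.
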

	We shall additionally see, Theorem~\ref{plessqd=dc}, that when $1 \leq p < q < 2$, the defect presented by \eqref{eq:defect} can be made quantitative for sets $\Omega$ with positive affine surface area. 
	
	For polyhedra $P \subset \R^n$ and $q < 2$, it turns out that the necessary conditions of Theorem~\ref{thm:intro-necessary} are also sufficient. A key step is to prove, in Theorem~\ref{interpolation}, that we can actually interpolate between $\PW^1(P)$ and $\PW^p(P)$, $1 < p < \infty$, by lifting the corresponding statement from the product Hardy space.
	
	\begin{theo}\label{poly}
		Let \(P\subset\mathbb R^n\) be an open polyhedron which does not contain any affine line,
		and let \(1\leq p,q<\infty\).
		\begin{enumerate}[label=\textup{(\roman*)}]
			
			\item Suppose $1\leq q<2$ and $q\leq p\leq q'$. If $P$ is bounded,
			then the $(p,q,d)$  boundary-weighted Fourier inequality holds for $\Omega = P$ if and only if
			\[
			d\leq d_c(p,q) = 1+\frac pq-p.
			\]
			If $P$ is unbounded, it holds exactly when $d=d_c(p,q)$.
			
			\item Suppose that $1\leq p<q < 2$. If $P$ is bounded, then
			the $(p,q,d)$ inequality holds if and only if
			\[
			d<d_c(p,q).
			\]
			If $P$ is unbounded, the $(p,q,d)$ inequality fails for every $d$.
		\end{enumerate}
	\end{theo}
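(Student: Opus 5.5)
Necessity is immediate from Theorem~\ref{thm:intro-necessary}: it forces $d\le d_c(p,q)$, strict inequality when $p<q<2$, and for unbounded $P$ it forces $p\ge q$ and $d=d_c$. So for unbounded $P$ with $p<q$ the inequality fails for every $d$. It therefore remains to prove sufficiency. By monotonicity it is enough to treat the largest admissible $d$. For bounded $P$, $\omega_P$ is bounded above, so $\omega_P^{-d}\lesssim \omega_P^{-d'}$ whenever $d\le d'$.

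\textbf{Part (i).} I would first establish the two endpoints of the range $q\le p\le q'$ at $d=d_c(p,q)$.

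The endpoint $p=q'$ gives $d_c=1+\tfrac{q'}{q}-q'=0$. There the inequality is Hausdorff--Young, $\|\hat f\|_{L^{q'}}\lesssim\|f\|_{L^q}$, valid since $q\le 2$.

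The endpoint $p=q$ gives $d_c=2-q$. At $q=1$ this is the Hardy inequality \eqref{eq:hardyineq} with $d=1$. At $q=2$ it would be Plancherel with $d=0$.

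For the Hardy inequality on a polyhedron I would localize. Near a point of $\partial P$, $P$ coincides with a cone, and after an affine change of variables the cone is simplicial or is a union of simplicial cones. For $x$ near a vertex whose tangent cone is $\R^k_+\times$(something), $\omega_P(x)$ is comparable to a product of the distances to the facets meeting there. This comparison is the key geometric estimate, and I would verify it via $2x-P$ near the vertex.

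I would then use a smooth partition of unity $\hat f=\sum_j \hat f\,\hat\phi_j$. Convolution with $\phi_j\in L^1$ preserves $\PW^1$, with Fourier support in $P$ intersected with a neighbourhood of the vertex. This reduces the problem to the product Hardy space $H^1(\R^k_+)\otimes$, where the product Hardy inequality $\int|\hat g|/\prod x_i\lesssim\|g\|_{H^1_{\rm prod}}$ holds. It follows by iterating the one-variable Hardy inequality, or from the lifting used in Theorem~\ref{interpolation}.

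Having both endpoints, I would interpolate. For fixed $q$, pass from $(q,q,2-q)$ to $(q',q,0)$ by complex interpolation of the weighted $L^p$ targets: Stein--Weiss gives $L^p(\omega^{-d})$ as an interpolation space, and $d_c$ is affine in $p$ along this segment.

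Only the source $\PW^q$ is fixed, so what I actually need is the family of $(q,q,2-q)$ bounds. I would get these by interpolating $(1,1,1)$ with Plancherel $(2,2,0)$, and this requires interpolating the sources $\PW^1(P)$ and $\PW^2(P)$. That is exactly what Theorem~\ref{interpolation} provides. Checking the exponents: at $1/q=1-\theta/2$ one gets $d=1-\theta=2-q$, as required.

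For unbounded $P$ the same argument gives $d=d_c$, and only that value is claimed.

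\textbf{Part (ii).} Here $p<q<2$, $P$ is bounded, and I want every $d<d_c$. By Hölder with exponents $q/p$ and $(q/p)'$,
\[
\int_P|\hat f|^p\omega^{-d}\le\Big(\int_P|\hat f|^q\omega^{-(2-q)}\Big)^{p/q}\Big(\int_P\omega^{-s}\Big)^{1-p/q},
\]
where $s=\big(d-\tfrac{p}{q}(2-q)\big)\big/(1-\tfrac pq)$. The condition $d<d_c$ is equivalent to $s<1$. The first factor is controlled by part (i). So the remaining issue is the integrability of $\omega_P^{-s}$ on a bounded polyhedron for all $s<1$.

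Using the product structure $\omega\asymp\prod\mathrm{dist}_i$ near vertices, this reduces to $\int_{[0,1]^k}\prod x_i^{-s}\,dx<\infty$, which holds for $s<1$.

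I expect the main obstacle to be the Hardy endpoint $(1,1,1)$ for general polyhedra. This has two parts: the two-sided comparison of $\omega_P$ with products of facet distances near non-simple vertices, where more than $n$ facets meet, and making the localization compatible with $\PW^1$ norms. For non-simple vertices I would first try to decompose the tangent cone into simplicial cones and use subadditivity.
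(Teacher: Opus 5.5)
Your architecture matches the paper's. Necessity comes from Theorem~\ref{thm:intro-necessary}, and monotonicity in $d$ holds for bounded $P$. The $(q,q,2-q)$ bound comes from interpolating $(1,1,1)$ with Plancherel through Theorem~\ref{interpolation}. H\"older against Hausdorff--Young handles $q\le p\le q'$ (your fixed-source Stein--Weiss step amounts to the same thing). Part (ii) uses H\"older plus integrability of $\omega_P^{-s}$, $s<1$. One small slip: Stein--Weiss gives the weight $\omega_P^{-(1-\theta)q}$, so $d=(1-\theta)q=2-q$, not $d=1-\theta$.

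\textbf{The gap: the $(1,1,1)$ endpoint at non-simple vertices.} The genuine gap is the endpoint you flag yourself, the $(1,1,1)$ Hardy inequality for an \emph{arbitrary} polyhedron. It is part of the statement ($p=q=1$), and your whole part (i) rests on it. Your localization is fine where $P$ is locally simplicial: near a simple vertex, $\omega_P$ coincides with the weight of the tangent cone, and iterating the one-variable Hardy inequality handles $\PW^1$ of an orthant. It fails at non-simple vertices, for two reasons.
\begin{enumerate}
\item There $\omega_P$ is not comparable to a product of facet distances. Under dilation about the vertex, $\omega_P$ is homogeneous of degree $n$, while a product over $N>n$ facets has degree $N$.
\item The suggested repair also fails. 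Writing the tangent cone as $C=\bigcup_\alpha C_\alpha$ with $C_\alpha$ simplicial and using $\omega_{C_\alpha}\le\omega_C$ yields
\[
\int_C\frac{|\hat f|}{\omega_C}\,dx\le\sum_\alpha\int_{C_\alpha}\frac{|\hat f|}{\omega_{C_\alpha}}\,dx .
\]
The right-hand side is infinite as soon as $\hat f$ is nonzero at an interior point of a triangulation wall, since $\omega_{C_\alpha}$ vanishes linearly there. Nor can you replace $f$ by an element of $\PW^1(C_\alpha)$, because the Fourier projection onto $C_\alpha$ is unbounded on $L^1$.
\end{enumerate}
A working reduction would have to use simplicial cones \emph{containing} $P$, cut out by $n$ of its facet hyperplanes. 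It would also need a new geometric comparison of $\omega_P$ with their weights on a finite cover of $P$. The paper does not do this either. It imports the polytope Hardy inequality from \cite[Theorem~1.2]{bampouras2026nehari}, with a constant $C(n,N)$ depending only on dimension and the number of facets.

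\textbf{Unbounded polyhedra.} Unbounded $P$ is not covered by ``the same argument'': a finite partition of unity near boundary points says nothing about $\hat f$ near infinity. In Lemma~\ref{polyhbamp}, the paper uses Lemma~\ref{compactdense} to reduce to $\supp\hat f\subset P_k=P\cap(-k,k)^n$. It then applies the polytope inequality on $P_k$ with the uniform constant $C(n,N+2n)$, and uses $\omega_{P_k}\le\omega_P$. That uniformity in $k$ is exactly what your localization, with constants tied to the shape of each $P_k$, does not supply as written.

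\textbf{Part (ii) is fine.} Your integrability claim does survive non-simple vertices, because it is a pointwise statement about weights. Triangulate $P$ into simplices $S_\alpha$ and note $\omega_P\ge\omega_{S_\alpha}$ on $S_\alpha$. Then use $\omega_{S_\alpha}\gtrsim\prod_{i\neq j}\lambda_i$ on the region where the barycentric coordinate $\lambda_j$ is largest. The paper instead cites \cite{Schuett1991}.
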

	For brevity, we have left out $q = 2$ from the statement. In this case, the problem is straightforward, only depending on the integrability of powers $\omega_\Omega^{-r}$, see Lemma~\ref{lem:qis2}.

        When $P = \R^n_+$, $\PW^q(\R^n_+)$ is nothing but the analytic product Hardy space. In this setting, on the diagonal $p=q < 2$, where the critical exponent is $d_c(q,q)=2-q$, the corresponding boundary-weighted Fourier inequality
        Theorem~\ref{poly} presents a Hardy--Littlewood-type inequality that has appeared in many different forms throughout the last hundred years. See for example \cite{DyachenkoEtAl2023}, which also includes an extension $0 < q < 1$.

	For a ball $B\subset\mathbb R^n$, $n \geq 2$, a simple computation shows that 
	\[
	\omega_B(x) \approx 
	\operatorname{dist}(x ,\partial B)^{(n+1)/2}.
	\]
	We begin our study of this case by writing down, more or less explicitly, an $L^\infty$-function whose distributional Fourier transform coincides with $\omega_B^{-1}$ on $B$; the existence of such a symbol directly implies that Helson's inequality \eqref{eq:helsonineq} holds for the ball, see Lemma~\ref{1/omega}. We do not know if it is possible to interpolate between $\PW^{p_1}(B)$ and $\PW^{p_2}(B)$, even for $1 < p_1 < p_2$. Indeed, a famous result of C. Fefferman \cite{MR296602} implies that there is no bounded projection $L^p(\R^n) \to \PW^p(B)$, unless $p = 2$, so a naive retraction argument cannot be applied. Nevertheless, we will devise an interpolation argument which gives the sharp positive result in the upper range $p \geq 2$.
	
	\begin{theo}\label{ball}
		Let $\Omega \subset\mathbb R^n$ be an open ball, where $n\geq2$, and suppose that
		\[
		1\leq q\leq2,
		\qquad 2\leq p<\infty.
		\]
		Then the $(p,q,d)$ boundary-weighted Fourier inequality holds for the ball if and only if
		\[
		p\leq q',
		\qquad d\leq d_c(p,q)=1+\frac pq-p.
		\]
	\end{theo}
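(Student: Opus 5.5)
Necessity is immediate from Theorem~\ref{thm:intro-necessary}: the ball is bounded, and the $(p,q,d)$ inequality forces $q\le 2$, $p\le q'$ and $d\le d_c(p,q)$. For sufficiency, $\omega_B$ is bounded on $B$, so $\omega_B^{-d}\lesssim \omega_B^{-d_c}$ whenever $d\le d_c$. It therefore suffices to prove the critical inequality $d=d_c(p,q)$ for every pair with $1\le q\le 2\le p\le q'$.

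I would identify the endpoints first. For $(p,q)=(2,1)$ the critical exponent is $d_c=1$, which is Helson's inequality \eqref{eq:helsonineq}; by Lemma~\ref{1/omega} it holds for the ball. For $p=q'$ we get $d_c(q',q)=1+\frac{q'}{q}-q'=0$, so the inequality reads $\|\hat f\|_{L^{q'}}\lesssim\|f\|_{L^q}$, which is Hausdorff--Young. In particular $(\infty,1,0)$ and $(2,2,0)$ hold, the latter by Plancherel.

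The point is to interpolate among these endpoints without interpolating the space $\PW^q(B)$ itself, since that is unavailable by Fefferman's theorem. Instead, I interpolate the image side with $q$ fixed. Fix $q$ and consider the linear map $T f=\hat f|_B$ from $\PW^q(B)$, a fixed normed space, into weighted spaces $L^p(B,\omega_B^{-d}dx)$. Stein--Weiss interpolation with change of measure applies to a single operator on a fixed domain. Interpolating between two target spaces $L^{p_0}(\omega^{-d_0})$ and $L^{p_1}(\omega^{-d_1})$ yields $L^{p_\theta}(\omega^{-d_\theta})$ with $\frac1{p_\theta}=\frac{1-\theta}{p_0}+\frac\theta{p_1}$ and $\frac{d_\theta}{p_\theta}=(1-\theta)\frac{d_0}{p_0}+\theta\frac{d_1}{p_1}$. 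Since $d_c/p=\frac1p+\frac1q-1$ is affine in $1/p$ for fixed $q$, criticality is preserved along such interpolation.

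So for each $q$ it suffices to establish the critical inequality at $p=2$ and at $p=q'$, and then interpolate in $1/p$.
\begin{itemize}
\item The case $p=q'$ is Hausdorff--Young.
\item The case $p=2$ requires $\int_B|\hat f|^2\omega_B^{-(2/q-1)}\lesssim\|f\|_q^2$ for $1\le q\le2$, which interpolates between Helson ($q=1$, weight $\omega^{-1}$) and Plancherel ($q=2$, weight $1$).
\end{itemize}

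The main obstacle is this last family, since it requires interpolating in the domain $\PW^q(B)$. I would handle it by duality, exploiting the Hilbert-space structure at $p=2$. The inequality is equivalent to the statement that for $g\in L^2(B,\omega^{-s})$, $s=2/q-1$, the function $h=\big(g\,\omega^{-s}\big)^\vee$ pairs boundedly against $\PW^q$. That is, $h$ should coincide on $\PW^q(B)$ with an $L^{q'}$ functional: there should exist $\phi\in L^{q'}$ with $\hat\phi=g\omega^{-s}$ on $B$, and $\|\phi\|_{q'}\lesssim\|g\|_{L^2(\omega^{-s})}$. Helson's inequality via Lemma~\ref{1/omega} gives this for $s=1$ with $\phi\in L^\infty$, constructed through the explicit $L^\infty$ symbol for $\omega_B^{-1}$ (convolution with $\chi_B\ast\chi_B$-type structure, $\omega_B(x)=\chi_B*\chi_B(2x)$). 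Plancherel gives it for $s=0$ with $\phi\in L^2$. I would then run complex interpolation on the analytic family $g\mapsto \phi_z$ built from $g\,\omega^{-z}$. On $\Re z=1$ it is bounded $L^2(\omega^{-1})\to L^\infty$; on $\Re z=0$ it is bounded $L^2\to L^2$. This requires the symbol construction to depend analytically on $z$ with admissible growth, which is the technical point I expect to verify carefully. Having the $p=2$ line, Stein--Weiss in $p$ completes the critical range, and monotonicity in $d$ finishes the proof.
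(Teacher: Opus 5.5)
Your necessity argument, the reduction to $d=d_c$, the endpoints (Helson at $(p,q)=(2,1)$, Hausdorff--Young at $p=q'$), and the interpolation in $1/p$ at fixed $q$ are all fine. With the domain $\PW^q(B)$ held fixed, that Stein--Weiss step is just the H\"older argument the paper uses.

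The gap is the $p=2$, $1<q<2$ line. It is not a matter of analyticity or growth: the endpoint you postulate on $\Real z=1$ cannot exist, even at $z=1$. Your duality reformulation is correct as stated. However, Helson's inequality produces, for each $g$, a symbol $\phi\in L^\infty$ with $\hat\phi=g\,\omega_B^{-1}$ on $B$ and $\|\phi\|_{L^\infty}\lesssim\|g\|_{L^2(\omega_B^{-1})}$ only through Hahn--Banach. The explicit symbol $\psi$ of Proposition~\ref{prop:symbol-ball} gives $\phi=(\mathcal F^{-1}g)*\psi$, whose norm is controlled by $\|\mathcal F^{-1}g\|_{L^1}$, not by $\|g\|_{L^2(\omega_B^{-1})}$.

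Complex interpolation needs a \emph{linear} bounded map $S\colon L^2(B,\omega_B^{-1})\to L^\infty$ with $\widehat{Sg}=g\,\omega_B^{-1}$ on $B$. Dualizing such an $S$ gives a bounded operator $L^1(\R^n)\to L^2(B,\omega_B^{-1})$ that extends $f\mapsto\hat f|_B$ from $\PW^1(B)$. By Grothendieck's theorem that operator is $1$-summing, hence $2$-summing. Testing $2$-summability on the wave packets $\varphi_{v,a_v}$ of Lemma~\ref{lem:capinequality} gives
\[
\#V\approx\sum_{v\in V}\int_B\frac{|\widehat{\varphi}_{v,a_v}|^2}{\omega_B}\,dx\lesssim\sup_{\|\alpha\|_{\ell^2}\le1}\Bigl\|\sum_{v\in V}\alpha_v\varphi_{v,a_v}\Bigr\|_{L^1}^2\lesssim |K_L|\,J_N^{-1}+C_M^2L^{-2M}\,\#V.
\]
For a large fixed $L$ this is the critical tube bound \eqref{eq:criticaltubekakeya}, which Besicovitch sets violate exactly as in Theorem~\ref{Hardygate}. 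So no such linear $S$ exists.

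The missing idea is to put the analytic family on the multiplier side and interpolate a quadratic form. Then all interpolation happens on the full scale $L^q(\R^n)$, and the restriction to $\PW^q(B)$ enters only at the end.

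Lemma~\ref{lem:ballinterp} builds distributions $M_z$ on $\R^n$ with $M_z|_B=\omega_B^{-z}$, $\|M_{it}\|_{L^\infty}\le Ce^{C|t|}$ and $\|\mathcal F^{-1}M_{1+it}\|_{L^\infty}\le Ce^{C|t|}$. It uses $(1-|x|^2+i0)^{-\alpha z}$ with $\alpha=\frac{n+1}{2}$ and the Hankel-function formula; this choice of $\alpha$ makes the Bessel order $\frac12$ at $z=1$, so the kernel is bounded.

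The multipliers $\mathcal T_zf=\mathcal F^{-1}(M_z\hat f)$ are then bounded $L^2\to L^2$ on $\Real z=0$ and $L^1\to L^\infty$ on $\Real z=1$. Stein interpolation gives $\mathcal T_{2/q-1}\colon L^q\to L^{q'}$. For $f\in\PW^q(B)$ one reads off $\int_B|\hat f|^2\omega_B^{1-2/q}\,dx=(\mathcal T_{2/q-1}f,f)\le C\|f\|_{L^q}^2$.

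The $\Real z=1$ endpoint is now the bilinear form of Helson's inequality, $|\langle f*\psi,f\rangle|\le\|\psi\|_{L^\infty}\|f\|_{L^1}^2$, which the explicit symbol does supply. In your dual language: interpolate in $f\in L^q(\R^n)$ with $g=\hat f$, not in $g\in L^2(B,\omega_B^{-s})$.
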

	
	However, when $p < 2$, the curved boundary of the ball yields a completely different situation from that of the flat-faced polyhedra. In fact, in the critical diagonal case $(q,q, 2-q)$, the corresponding boundary-weighted Fourier inequality always
	fails. This is a consequence of the Besicovitch/Kakeya set construction of a measure zero set containing every direction, not dissimilar to the original disproof of the disc conjecture \cite{MR296602}, alluded to earlier.
	
	\begin{theo}\label{Hardynegate-intro}
		Let $\Omega = B\subset\mathbb R^n$ be a ball, $n\geq2$. For every
		\(1\leq q<2\), the $(q,q,d_c)$ inequality, 
		\[
		\int_B\frac{|\widehat f(x)|^q}{\omega_B(x)^{2-q}}\,dx
		\lesssim
		\|f\|_{L^q}^q,
		\qquad f\in\PW^q(B),
		\]
		fails.
	\end{theo}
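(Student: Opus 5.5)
We will argue by contradiction, following Fefferman's disproof of the disc multiplier conjecture. Suppose the $(q,q,2-q)$ inequality holds for some $1\le q<2$. Since $\omega_B(x)\approx \operatorname{dist}(x,\partial B)^{(n+1)/2}$, the inequality reads
\[
\int_B |\hat f(x)|^q\,\delta(x)^{-(2-q)(n+1)/2}\,dx\lesssim\|f\|_{L^q}^q,\qquad \delta(x)=\operatorname{dist}(x,\partial B).
\]

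\emph{Test functions.} Fix a large parameter $R$ and cover $\partial B$ by $\approx R^{(n-1)/2}$ caps of size $R^{-1/2}\times\dots\times R^{-1/2}\times R^{-1}$. Each cap sits in a slab of depth $R^{-1}$ inside $B$ next to the boundary point $\xi_j$, so $\delta\approx R^{-1}$ on it. For each $j$ let $\phi_j$ be a smooth bump adapted to the $j$-th cap, placed at distance about $R^{-1}$ from $\partial B$. Its inverse Fourier transform $\check\phi_j$ is a wave packet, essentially $L^\infty$-normalised, concentrated on a tube $T_j$ of dimensions $R\times R^{1/2}\times\dots\times R^{1/2}$ with long axis normal to $\partial B$ at $\xi_j$, and modulated by $e^{2\pi i\xi_j\cdot y}$.

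\emph{Kakeya arrangement.} By the Besicovitch/Kakeya construction, for any $\varepsilon>0$ one can translate the tubes, $T_j\mapsto T_j+a_j$, so that the rescaled (by $R^{-1}$) tubes have union of measure at most $\varepsilon$ times the sum of their measures, while their "reaches" $\tilde T_j$ (the adjacent translates along the axis) remain essentially disjoint. This is exactly the configuration used in \cite{MR296602}. Set $f=\sum_j \epsilon_j e^{2\pi i a_j\cdot y}\check\phi_j$ with random signs $\epsilon_j$. Then $\hat f=\sum_j\epsilon_j e^{-2\pi i a_j\cdot x}\phi_j$. Since the caps have bounded overlap, the left-hand side is
\[
\approx \sum_j\int|\phi_j|^q\,\delta^{-(2-q)(n+1)/2}\approx N\,|{\rm cap}|\,R^{(2-q)(n+1)/2},
\]
where $N\approx R^{(n-1)/2}$ and $|{\rm cap}|\approx R^{-(n+1)/2}$. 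For the right-hand side we compute in expectation using Khintchine's inequality:
\[
\mathbb E\|f\|_q^q\approx\int\Bigl(\sum_j|\check\phi_j(y-a_j)|^2\Bigr)^{q/2}dy.
\]
Because the sum is supported on a set $E=\bigcup(T_j+a_j)$ with $|E|\le\varepsilon\sum|T_j|$, Hölder with $q/2<1$ gives
\[
\mathbb E\|f\|_q^q\lesssim |E|^{1-q/2}\Bigl(\sum_j\|\check\phi_j\|_2^2\Bigr)^{q/2}.
\]

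\emph{Scaling check.} We have $\|\check\phi_j\|_2^2=|{\rm cap}|^2\cdot|T_j|\approx|{\rm cap}|$ and $|T_j|\approx|{\rm cap}|^{-1}$. The comparison then reduces to checking that both sides scale identically in $R$, which is the criticality of $d_c$: the left side carries exactly the extra factor $R^{(2-q)(n+1)/2}=|{\rm cap}|^{-(2-q)}$ compensating the $L^q$ versus $L^2$ loss. So the ratio of right to left is $\lesssim\varepsilon^{1-q/2}$. Letting $\varepsilon\to0$ with $R$ chosen accordingly then contradicts the inequality.

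\emph{Main obstacle.} The delicate point is the tail control. The Schwartz tails of the wave packets leave the Kakeya set $E$, so $f$ is only concentrated on $E$. We plan to handle this as Fefferman does, by truncating to smooth compactly Fourier-supported bumps and estimating the off-$E$ contribution by rapid decay. Care is also needed for the $R$-dependence of the Kakeya construction; we plan to fix $\varepsilon$ first, take the corresponding finite Besicovitch configuration of $N$ tubes, and then choose $R$ to match $N$.
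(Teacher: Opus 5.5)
Your proposal is essentially the paper's proof: caps of size $N^{-2}\times N^{-1}\times\cdots\times N^{-1}$ at depth $\approx N^{-2}$ (your $R=N^2$), random-sign wave packets, Khintchine, H\"older with $q/2<1$ on the union of the dual tubes, and neighbourhoods of a measure-zero Besicovitch set, with the same critical scaling you verify (the disjoint ``reaches'' of Fefferman's configuration play no role and can be dropped). The one step you still need to pin down is the tail: at $d=d_c$ the off-tube mass $\sum_j\|\check\phi_j\|_{L^q(T_j^c)}^q$ is of the same order as the left-hand side, so rapid decay closes the argument only after you enlarge the tubes by a large constant $L$, depending on the constant of the assumed inequality, and apply the Kakeya compression to the enlarged tubes; the paper does this in Lemmas~\ref{lem:capinequality} and \ref{lem:tubeinequality} by centring the packets at $a_v=NLb_v$, so that the enlarged tubes are exactly the dilate $NL\cdot U$ of the Besicovitch configuration (truncating the packets is not an option, since they must keep Fourier support in $B$).
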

	
	In fact, there is a direct link between the range of subcritical $(q,q,d)$ Fourier inequalities for the ball, $d < d_c(q,q) = 2-q$, and the Kakeya conjecture in its weaker Minkowski form. 
	
	\begin{theo} \label{thm:minkowskicons}
		Let \(1\le q<2\).  Assume that the $(q,q,d)$ boundary-weighted Fourier inequality for $B$ holds for every $d < d_c(q,q) = 2-q$, 
		\begin{equation*} 
			\int_{B}\dfrac{|\hat{f}(x)|^q}{\omega_{B}^d(x)}dx\leq C_{q, d}\|f\|_{L^q}^q,\quad f\in \PW^q(B). 
		\end{equation*}
		Then every compact Kakeya set in $\mathbb R^n$ has full Minkowski dimension $n$.
	\end{theo}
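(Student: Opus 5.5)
The plan is a Fefferman-type wave packet argument run in reverse. We test the assumed subcritical inequalities on a randomized sum of wave packets adapted to a Kakeya set. The $L^q$ norm of this sum is controlled by the measure of the union of tubes, while the weighted Fourier side is computed exactly. Suppose, for contradiction, that a compact Kakeya set $K$ has upper Minkowski dimension $\alpha<n$. Then for small $\varepsilon$ its $\varepsilon$-neighbourhood satisfies $|K_\varepsilon|\lesssim \varepsilon^{n-\alpha'}$ for some $\alpha'<n$, along a sequence $\varepsilon\to0$. Choose a maximal $\varepsilon$-separated set of directions $\{\omega_j\}_{j=1}^N\subset S^{n-1}$, so $N\approx \varepsilon^{-(n-1)}$. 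Let $\ell_j\subset K$ be unit segments in direction $\omega_j$, and let $T_j^\varepsilon$ be their $\varepsilon$-tubes, all contained in $K_\varepsilon$.

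Set $\delta=\varepsilon^2$ and rescale by $\delta^{-1}$. The tube $\delta^{-1}T_j^\varepsilon$ then has length $\delta^{-1}$ and width $\delta^{-1/2}$, and it is dual to the cap region
\[
R_j=\{x\in B:\ \operatorname{dist}(x,\partial B)\approx\delta,\ |x/|x|-\omega_j|\lesssim\delta^{1/2}\}
\]
(taking $B$ to be the unit ball). Let $\psi_j$ be a smooth bump adapted to $R_j$, with $\hat\psi_j\approx 1$ on a fixed fraction of $R_j$. By Schwartz decay, $|\psi_j|\approx |R_j|\,\mathbf 1_{\delta^{-1}T_j^\varepsilon}$ up to rapidly decaying tails, and we translate each $\psi_j$ so that it is concentrated on $\delta^{-1}T_j^\varepsilon$. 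Consider $f=\sum_j \epsilon_j\psi_j\in\PW^q(B)$ with random signs $\epsilon_j$.

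For the left-hand side, the $R_j$ are essentially disjoint because of the $\delta^{1/2}$-separation of the caps. Since $\omega_B\approx\delta^{(n+1)/2}\approx|R_j|$ on $R_j$, we get, independently of the signs,
\[
\int_B\frac{|\hat f|^q}{\omega_B^d}\gtrsim N\,\delta^{\frac{n+1}{2}(1-d)}.
\]
For the right-hand side, Khintchine's inequality gives $\mathbb E\|f\|_q^q\approx\int(\sum_j|\psi_j|^2)^{q/2}$. Since $q<2$, Hölder's inequality then gives
\[
\mathbb E\|f\|_q^q\lesssim \Big|\bigcup_j\delta^{-1}T_j^\varepsilon\Big|^{1-q/2}\Big(\sum_j\|\psi_j\|_2^2\Big)^{q/2}\lesssim \big(\delta^{-n}|K_\varepsilon|\big)^{1-q/2}\big(N\delta^{\frac{n+1}{2}}\big)^{q/2}.
\]
Choose signs attaining at most the expectation and apply the $(q,q,d)$ inequality. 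Writing $t=2-q-d>0$, the powers of $N\approx\varepsilon^{-(n-1)}$ and of $\delta=\varepsilon^2$ cancel, apart from the defect. The result is
\[
\varepsilon^{(n+1)t}\lesssim |K_\varepsilon|^{1-q/2},\qquad\text{i.e.}\qquad |K_\varepsilon|\gtrsim \varepsilon^{\frac{(n+1)t}{1-q/2}}.
\]
Since $t>0$ can be taken arbitrarily small by hypothesis, this contradicts $|K_\varepsilon|\lesssim\varepsilon^{n-\alpha'}$ once $(n+1)t/(1-q/2)<n-\alpha'$. Hence $K$ has Minkowski dimension $n$. For $q=1$ the Hölder step is the same, and Khintchine's inequality still holds in $L^1$.

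The main obstacle is making the wave packet heuristics rigorous. First, the bumps must be chosen so that $\hat\psi_j$ is supported in $B$, avoids overlaps, and is bounded below on a set of measure $\approx|R_j|$ where $\omega_B\approx\delta^{(n+1)/2}$. Second, the Schwartz tails of $\psi_j$ outside the tubes must be controlled. I would take $\hat\psi_j$ to be a smooth bump on a slightly shrunken box inside $R_j$. I would then bound $\sum_j|\psi_j|^2$ by $\sum_k 2^{-kM}\sum_j|R_j|^2\mathbf 1_{2^k\delta^{-1}T_j^\varepsilon}$, and use the fact that $2^k$-dilated tubes lie in $K_{2^k\varepsilon}$, whose measure obeys the same Minkowski bound with a harmless factor $2^{kn}$. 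With $M$ large, this summation absorbs the tails.
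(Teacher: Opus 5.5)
Your architecture matches the paper's (Lemmas~\ref{lem:capinequality} and~\ref{lem:tubeinequality}): signed wave packets on $\varepsilon$-caps at depth $\delta=\varepsilon^2$, then Khintchine, then H\"older over the union of tubes. Your exponent count for the main term, $\varepsilon^{(n+1)t}\lesssim|K_\varepsilon|^{1-q/2}$, is also correct.

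The gap is the tail step, which is where the real difficulty lies. The $2^k$-dilate of $\delta^{-1}T_j^\varepsilon$ about its centre is, in original units, the $2^k\varepsilon$-neighbourhood of a segment of length $2^k$ that extends $\ell_j$. That segment leaves $K$, so the dilate is not contained in $K_{2^k\varepsilon}$. Nothing in your hypotheses bounds $|\bigcup_j 2^kT_j^\varepsilon|$ by $2^{Ck}|K_\varepsilon|$, and no such bound holds for unions of tubes in general. In Fefferman's Besicovitch-type configurations, heavily overlapping tubes have translates along their axes, lying in a fixed dilate $CT_j$, that are pairwise disjoint; this is the very phenomenon you are exploiting. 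With only the trivial bound $|\bigcup_j 2^k\delta^{-1}T_j^\varepsilon|\lesssim 2^{kn}\delta^{-n}$, the resulting bound for the $k=1$ tail term exceeds your main term by a factor $\approx 2^{-Mq/2}|K_\varepsilon|^{-(1-q/2)}$. The argument then only returns the trivial $\varepsilon^{(n+1)t}\lesssim 1$. Enlarging $M$ improves a constant, not a power of $\varepsilon$.

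The missing idea, which is how the paper proceeds, is to separate the packet scale from the tube scale by a factor $L\to\infty$. Keep the packets at size $\delta^{-1}\times\delta^{-1/2}$, but centre $\psi_j$ at $L\delta^{-1}c_j$, where $c_j$ is the midpoint of $\ell_j$; this is the paper's choice $a_v=NLb_v$.

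Then the $L$-fold dilates of the packets about their centres are exactly the tubes $L\delta^{-1}T_j^\varepsilon$. Their union has measure at most $(L\delta^{-1})^n|K_\varepsilon|$. Off this union each packet keeps only $O_M(L^{-M})\|\psi_j\|_q^q$, which suffices by subadditivity of $s\mapsto s^{q/2}$.

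Relative to the left-hand side $N\delta^{\frac{n+1}{2}(1-d)}$, the tail is $O(L^{-M}\varepsilon^{-(n+1)t})$. Taking $L=\varepsilon^{-\eta}$ with $\eta M>(n+1)t$ makes it negligible. The main term then gives $|K_\varepsilon|\gtrsim\varepsilon^{\frac{(n+1)t}{1-q/2}+n\eta}$. The loss $\varepsilon^{-n\eta}$ is affordable only because you need just $|K_\varepsilon|\gtrsim_\epsilon\varepsilon^{\epsilon}$, which is exactly what the subcritical hypothesis is for.

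A minor point: ``along a sequence'' corresponds to lower, not upper, Minkowski dimension. Since your estimate holds at every scale, you in fact get full lower Minkowski dimension, as the paper states.
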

	We do not know whether the hypothesis of Theorem~\ref{thm:minkowskicons} is ever satisfied when $n \geq 3$. For $n = 2$, we will see below that the subcritical $(q,q,d)$ Fourier inequality actually does hold for the unit disc $\mathbb{D}$, when $\frac{4}{3} \leq q < 2$ and $d < 2-q$. The conclusion that every compact Kakeya set does have full Minkowski dimension is of course already known to be true for $n = 2$ \cite{Cordoba77}, and for $n=3$ by the very recent positive resolution of the Kakeya conjecture in three dimensions \cite{WangZahl2025Kakeya}; see also the streamlined proof given in \cite{GuthWangZahl2026}.
	
	There is also an implication in the opposite direction to be obtained from Fourier restriction theory. Recall that the spherical restriction conjecture \cite{Tao2004restriction} asserts the validity of the estimates
	\[
	\|\widehat F|_{S^{n-1}}\|_{L^p(S^{n-1})}
	\lesssim \|F\|_{L^q(\mathbb R^n)}, \qquad 1\le q< \frac{2n}{n+1}, \; \; 1\le p \le \frac{(n-1)q'}{n+1}.
	\]
	\begin{theo}
		Assume that the spherical restriction conjecture holds for some $n \geq 2$. Then, if $\frac{2n}{n+1} \leq q < 2$, the $(q, q, d)$ boundary-weighted Fourier inequality holds for the ball $B$ for every $d < d_c(q,q) = 2-q$. 
	\end{theo}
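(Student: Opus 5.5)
Assuming the restriction conjecture, the plan is to decompose the ball into concentric spherical shells and apply restriction on each sphere. Take $B$ to be the unit ball centred at the origin. Write $x = r\theta$ with $\theta\in S^{n-1}$ and $0<r<1$. Then $\omega_B(x)\approx (1-r)^{(n+1)/2}$, and
\[
\int_B \frac{|\hat f(x)|^q}{\omega_B^d(x)}\,dx \approx \int_0^1 (1-r)^{-d(n+1)/2}\int_{S^{n-1}}|\hat f(r\theta)|^q\,d\sigma(\theta)\, r^{n-1}\,dr .
\]
For each $r$, the function $\theta\mapsto \hat f(r\theta)$ is the restriction of $\widehat{f_r}$ to the unit sphere, where $f_r(y)=r^{-n}f(y/r)$, so $\|f_r\|_{L^q}=r^{-n/q'}\|f\|_{L^q}$. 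For $r$ near $1$ this is harmless. The region $r\le 1/2$ is handled separately, for example by the trivial bound $|\hat f|\le \|f\|_{L^1}$ combined with the fact that the weight is bounded below there. Since $f\in L^q$ with $q>1$, I would instead localize: multiply $f$ by a Schwartz function whose Fourier transform is supported in a small ball, reducing to the compact-support-in-frequency estimate $\|\hat f\|_{L^\infty(\frac12 B)}\lesssim \|f\|_{L^q}$, which follows from Bernstein/Hausdorff--Young type bounds for band-limited functions.

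The key point is that a single sphere gives a uniform bound but no gain as $r\to1$; the decay has to come from the weight. The restriction conjecture with exponent $p=q$ requires $q\le (n-1)q'/(n+1)$. This is equivalent to $q\le 2n/(n+1)$, so it just fails in the stated range $q\ge 2n/(n+1)$, apart from the endpoint. This forces an interpolation. I would use restriction at some $q_0<2n/(n+1)$ with $p_0=q_0$ (allowed once $q_0$ is slightly below the threshold), and the trivial $L^2$ estimate at $q_1=2$. For $q=2$, Plancherel in polar coordinates gives $\int_0^1 \|\hat f(r\cdot)\|^2_{L^2(S^{n-1})}\,dr\lesssim \|f\|_2^2$: the estimate holds on average in $r$, not for each $r$.

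The natural object is therefore the mixed-norm operator $T f(r,\theta)=\hat f(r\theta)$. It maps $L^{q_0}\to L^\infty_r L^{q_0}_\theta$ (uniform restriction, using the dilation and $r\in[1/2,1]$) and $L^2\to L^2_rL^2_\theta$ (Plancherel). Complex interpolation of mixed-norm spaces gives $T:L^q\to L^{s}_r L^q_\theta$ with $\frac1s=\frac{1-\vartheta}{\infty}+\frac{\vartheta}{2}$, where $\frac1q=\frac{1-\vartheta}{q_0}+\frac\vartheta2$. Here $T$ is defined on all of $L^q$ (no Paley--Wiener projection is needed), so interpolation is legitimate. Next apply Hölder in $r$:
\[
\int_{1/2}^1 (1-r)^{-d(n+1)/2}\|\hat f(r\cdot)\|_{L^q_\theta}^q\,dr \le \big\|(1-r)^{-d(n+1)/2}\big\|_{L^{(s/q)'}}\,\|Tf\|^q_{L^s_rL^q_\theta}.
\]
This is finite iff $d\frac{n+1}{2}(s/q)'<1$, that is, $d<\frac{2}{n+1}\bigl(1-\frac qs\bigr)$.

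The main obstacle is checking that, as $q_0\uparrow 2n/(n+1)$, this threshold exhausts $d<2-q$. If it comes out short, the fallback is to use restriction at the endpoint pair $(q_0,p_0)$ with $p_0<q_0$ and then Hölder on the sphere, or to insert a Littlewood--Paley/shell decomposition at scale $1-r\sim 2^{-k}$ with an uncertainty-principle gain. The interpolation computation is $\vartheta=(1/q_0-1/q)/(1/q_0-1/2)$ and $q/s=q\vartheta/2$. At $q_0=2n/(n+1)$ we have $1/q_0-1/2=1/(2n)$, so $\vartheta=2n(\frac{n+1}{2n}-\frac1q)=n+1-\frac{2n}q$. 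The threshold is then
\[
\frac{2}{n+1}\Bigl(1-\frac{q}{2}\bigl(n+1-\tfrac{2n}{q}\bigr)\Bigr)=\frac{2}{n+1}\cdot\frac{(n+1)(2-q)}{2}=2-q,
\]
so the range is exactly $d<2-q$ in the limit. Since we only need $d<2-q$ strictly, choosing $q_0$ close enough to $2n/(n+1)$ suffices.
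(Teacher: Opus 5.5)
Your main argument is correct, but it is organized differently from the paper's. The paper first turns restriction into a weighted inequality (Lemma~\ref{lem:resttohardy}). There, the uniform bound on the dilated spheres, $\frac12\le r\le 1$, is integrated against $(1-r)^{-(n+1)d/2}$, which requires $d<\frac{2}{n+1}$. Only then does it interpolate, using complex interpolation of weighted $L^p$ spaces with change of measure, in two steps. First it interpolates $(q_-,q_-,d_-)$ against the unweighted bound $\|\widehat F\|_{L^{q_+}(B)}\lesssim\|F\|_{L^{q_+}}$, reaching $q=\rho_n$ with every $d<2-\rho_n$. Then it interpolates that estimate against Plancherel to cover $\rho_n\le q<2$.

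You instead interpolate before the weight enters. The mixed-norm Riesz--Thorin theorem (Benedek--Panzone; the $L^\infty_r$ endpoint causes no trouble) gives the weight-free shell estimate $\|\widehat F(r\theta)\|_{L^s_rL^q_\theta([\frac12,1]\times S^{n-1})}\lesssim\|F\|_{L^q}$, and you apply H\"older in $r$ afterwards. For each fixed $q_0$ the two routes give the same threshold, since $\frac{2}{n+1}\bigl(1-\frac{q\vartheta}{2}\bigr)=\frac{2}{n+1}\cdot\frac{q(1-\vartheta)}{q_0}$. The right-hand side is what weighted interpolation yields when $(q_0,q_0,d_0)$ with $d_0\uparrow\frac{2}{n+1}$ is interpolated directly against Plancherel, so the paper's intermediate stop at $q=\rho_n$ is not essential. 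Your version gives a stronger intermediate statement: any radial weight in $L^{(s/q)'}(dr)$ is admissible, for all $F\in L^q$. The cost is mixed-norm interpolation. The paper's route needs only standard weighted interpolation, and it isolates a restriction-to-weighted-inequality lemma valid for general $p\le q'$.

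One step does fail as written: the region $|x|\le\frac12$.
\begin{itemize}
\item The bound $|\hat f|\le\|f\|_{L^1}$ is unavailable for $f\in L^q$.
\item Multiplying $f$ by a Schwartz function does not localize $\hat f$.
\item The estimate $\|\hat f\|_{L^\infty(B(0,\frac12))}\lesssim\|f\|_{L^q}$ is false for every $q>1$, even on $\PW^q(B)$. Take $\hat f=|x|^{-a}\chi$, with $\chi\in C_c^\infty(B)$ equal to $1$ near $0$ and $0<a<n/q'$. Then $f$ decays like $|y|^{a-n}$, so $f\in L^q$, while $\hat f$ is unbounded.
\end{itemize}
The fix is easy and is the paper's. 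On $B(0,\frac12)$, $\omega_B$ is bounded above and below. Since $q\le q'$, H\"older on this set of finite measure together with Hausdorff--Young gives $\int_{B(0,1/2)}|\hat f|^q\omega_B^{-d}\lesssim\|\hat f\|_{L^{q'}}^q\lesssim\|f\|_{L^q}^q$. With this repair, and the routine density passage from Schwartz functions to $\PW^q(B)$ (Lemma~\ref{compactdense}), your proof is complete.
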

    The paradigm that restriction estimates imply weighted Fourier inequalities is not new, see \cite{DeCarliGorbachevTikhonov2017}. 
    
    It is of course already well known that the spherical restriction conjecture implies the Kakeya conjecture. In light of these two results, the scale of subcritical $(q,q,d)$ Fourier inequalities for $B$, $d < d_c$, arguably sits in between the restriction problem and the Kakeya conjecture. When $n=2$, the restriction conjecture for the circle is true, and we obtain a sharp unconditional corollary.
	\begin{cor} \label{cor:zygmundintro}
		The $(q,q,d)$ boundary-weighted Fourier inequality 
		\[
		\int_{\mathbb{D}} \frac{|\hat{f}(x)|^q}{(1-|x|)^{\frac{3}{2}d}} \, dx \lesssim \|f\|_{L^q}^q, \qquad f \in \PW^q(\mathbb{D}),
		\]
		holds for the unit disc $\mathbb{D} \subset \R^2$, when $\frac43 \leq q <2$ and $d < d_c(q,q) = 2-q$.  
	\end{cor}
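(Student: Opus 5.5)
This follows from the preceding theorem (restriction implies subcritical inequalities) once we recall that the restriction conjecture is known for $n=2$. We also need to identify $\omega_{\mathbb{D}}$ with a power of $1-|x|$.

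\emph{Step 1: restriction on the circle.} For $n=2$, the spherical restriction conjecture is the Fefferman--Stein--Zygmund theorem:
\[
\|\widehat F|_{S^1}\|_{L^p(S^1)}\lesssim\|F\|_{L^q(\R^2)}, \qquad 1\le q<\tfrac43,\quad p\le \tfrac{q'}{3}.
\]
This is exactly the hypothesis of the previous theorem with $n=2$. That theorem then gives the $(q,q,d)$ boundary-weighted Fourier inequality for $\mathbb{D}$ in the range $\frac{2n}{n+1}=\frac43\le q<2$, for every $d<2-q$.

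\emph{Step 2: the weight on the disc.} We rewrite $\omega_{\mathbb{D}}$ via the computation $\omega_B(x)\approx \operatorname{dist}(x,\partial B)^{(n+1)/2}$ recorded above. With $n=2$ this reads $\omega_{\mathbb{D}}(x)\approx (1-|x|)^{3/2}$ on $\mathbb{D}$.

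To justify it, note that $\mathbb{D}\cap(2x-\mathbb{D})$ is a lens whose two unit circles are centred $2|x|$ apart. Its width is $2(1-|x|)$ and its length is comparable to $\sqrt{1-|x|}$ near the boundary, so its area is $\approx (1-|x|)^{3/2}$. Away from the boundary both sides are bounded above and below by positive constants.

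It follows that $\omega_{\mathbb{D}}^d\approx(1-|x|)^{3d/2}$ with constants depending on $d$. This holds for either sign of $d$, since two-sided comparability is preserved under real powers. Substituting into the inequality from Step 1 gives the displayed estimate.

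\emph{Main obstacle.} The only real content is the preceding theorem, which we take as given. The remaining care is in matching the index ranges: the theorem needs the conjecture on the full range $1\le q<\frac{2n}{n+1}$, and the Fefferman--Stein--Zygmund theorem supplies this for $n=2$. The case $q=\frac43$ is included by the statement of the theorem itself.
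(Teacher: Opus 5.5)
Your proposal is correct and follows the paper's argument: Zygmund's restriction theorem for the circle verifies the hypothesis of Theorem~\ref{thm:restrconj} when $n=2$, which gives the $(q,q,d)$ inequality for $\frac43\le q<2$ and $d<2-q$, and the comparability $\omega_{\mathbb{D}}(x)\approx(1-|x|)^{3/2}$ converts $\omega_{\mathbb{D}}^{d}$ into $(1-|x|)^{\frac32 d}$. Your lens computation supplies the ``simple computation'' that the paper leaves implicit, and it is uniform on all of $\mathbb{D}$: the lens is convex with width $2(1-|x|)$ and chord length $2\sqrt{1-|x|^2}$, so its area is comparable to their product, and $1-|x|^2=(1-|x|)(1+|x|)$.
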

	In the final section of the paper, we will return to our original motivation for studying the Helson inequality, namely, the study of (truncated) Hankel operators on $\PW^2(\Omega)$. We defer precise definitions to Section~\ref{Sec7}. As discussed earlier, we obtain the following corollary of Theorem~\ref{ball}, answering the open question posed in \cite{MR4502777}.
    \begin{cor} \label{cor:HSball}
    Every Hilbert-Schmidt Hankel operator on $\PW^2(B)$ is generated by a bounded symbol.
    \end{cor}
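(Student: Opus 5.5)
The corollary follows by combining Theorem~\ref{ball}, at the Helson endpoint $(p,q,d)=(2,1,1)$, with a duality argument of Nehari type for Hankel operators on $\PW^2(B)$. We take the framework of \cite{bampouras2024besovspacesschattenclass} and \cite{MR4502777}, recalled in Section~\ref{Sec7}. A Hankel operator on $\PW^2(B)$ is determined by a symbol $\varphi$, a distribution whose Fourier transform is supported in $\overline{2B}=\overline{B+B}$. The bilinear form is
\[
\langle H_\varphi f, \bar g\rangle = \langle \varphi, fg\rangle, \qquad f,g\in \PW^2(B),
\]
and $fg$ ranges over a dense subset of $\PW^1(2B)$. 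The Hilbert--Schmidt norm is
\[
\|H_\varphi\|_{S_2}^2=\int_{2B}|\hat\varphi(x)|^2\,\omega_B(x/2)\,dx .
\]
This is the standard computation: $\omega_B(x/2) = m(B\cap(x-B))$ counts how many pairs of frequencies in $B$ sum to $x$. Up to a constant and a dilation, it equals $\int_{2B}|\hat\varphi|^2\omega_{2B}\,dx$.

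The plan has three steps.

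First, apply Theorem~\ref{ball} with $q=1$, $p=2$, $d=1$. This is admissible, since $p\le q'=\infty$ and $d_c(2,1)=1+2-2=1$. It gives Helson's inequality for the ball $2B$:
\[
\int_{2B}\frac{|\hat F(x)|^2}{\omega_{2B}(x)}\,dx\le C\|F\|_{L^1}^2,\qquad F\in \PW^1(2B).
\]
The ball is chosen as $2B$ because $fg$ has Fourier support in $2B$.

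Second, given a Hilbert--Schmidt Hankel operator with symbol $\varphi$, consider the linear functional $L(F)=\int_{2B}\hat\varphi\,\overline{\hat F}\,dx$, defined on Schwartz-type elements of $\PW^1(2B)$. By Cauchy--Schwarz with the weight $\omega_{2B}$,
\[
|L(F)|\le\Big(\int_{2B}|\hat\varphi|^2\omega_{2B}\Big)^{1/2}\Big(\int_{2B}\frac{|\hat F|^2}{\omega_{2B}}\Big)^{1/2}\lesssim \|H_\varphi\|_{S_2}\,\|F\|_{L^1}.
\]
So $L$ extends to a bounded functional on $\PW^1(2B)$, a closed subspace of $L^1(\R^n)$. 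By Hahn--Banach it extends to all of $L^1$. Hence there is $b\in L^\infty(\R^n)$ with $\|b\|_\infty\lesssim\|H_\varphi\|_{S_2}$ such that $\hat b=\hat\varphi$ on $2B$ in the sense of distributions, i.e. $b-\varphi$ annihilates $\PW^1(2B)$.

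Third, conclude that $H_b=H_\varphi$. Since $fg\in\PW^1(2B)$ for $f,g\in\PW^2(B)$, the two bilinear forms agree. Thus $H_\varphi$ is generated by the bounded symbol $b$, with the norm control $\|b\|_\infty\lesssim \|H_\varphi\|_{S_2}$.

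The main obstacle is bookkeeping in the second step. One has to justify the pairing on a dense class: products $fg$ with $f,g$ Schwartz functions in $\PW^2(B)$ are dense in $\PW^1(2B)$, or else one works directly on the span of such products. One also has to handle boundary normalizations, since $\hat\varphi$ is only locally integrable in the open ball and the weighted pairing must converge. Both follow from the $L^2(\omega_{2B})$ finiteness of $\hat\varphi$ and the weighted bound above. The identification of the Hilbert--Schmidt norm with the $\omega$-weighted $L^2$ norm is quoted from \cite{bampouras2024besovspacesschattenclass}.
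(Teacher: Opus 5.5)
Your proof is correct and is essentially the paper's argument. You combine Helson's inequality for a ball (Theorem~\ref{ball} at $(p,q,d)=(2,1,1)$, transported to $2B$) with the duality and Hahn--Banach step of Lemma~\ref{embednehari} and Proposition~\ref{firstprop} at $p=2$. The paper packages the pairing through Besov duality, $B^{1/2}_{2,2}(2B)$ against $B^{-1/2}_{2,2}(2B)$, together with \eqref{eq:besovnec}; you instead use the explicit Hilbert--Schmidt formula and a weighted Cauchy--Schwarz, which is exactly what those Besov norms reduce to when $p=2$.
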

    Alternatively, we say that Nehari's theorem holds for $S^2(\PW^2(B))$. More generally, we are going to show that there is a direct link between Nehari's theorem for $S^p(\PW^2(\Omega))$ and the $(p', 1, 1)$ boundary-weighted Fourier inequality
	$$\int_\Omega\dfrac{|\hat{f}(x)|^{p'}}{\omega_\Omega(x)}dx\lesssim \|f\|_{L^1}^{p'},\quad f\in\PW^1(\Omega),$$
    see Propositions~\ref{firstprop} and \ref{prop:Nehari-implies-Hardy}.
	
	Before closing the introduction, it must be mentioned that we will actually begin the paper by proving a foundational geometric theorem. In order to define and apply the Besov spaces $B^s_{p,q}(\Omega)$ of Paley--Wiener type in \cite{bampouras2024besovspacesschattenclass}, it was necessary to introduce a notion of admissible sets $\Omega$, which required $\Omega$ to be decomposable into parallelepipeds satisfying certain natural axioms, see Section~\ref{Sec2}. We will complete the study by proving that this hypothesis is actually automatic.
	\begin{theo} \label{thm:main}
		Every open convex set \(\Omega\subset\mathbb R^n\), free of affine lines, is
		admissible. 
	\end{theo}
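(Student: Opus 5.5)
The plan is to build the decomposition required in Section~\ref{Sec2} from the Macbeath regions of $\Omega$. For $x\in\Omega$ write
\[
M(x)=\Omega\cap(2x-\Omega),\qquad M_\lambda(x)=x+\lambda\bigl(M(x)-x\bigr),\quad 0<\lambda\le 1 .
\]
I am assuming the admissibility axioms are of Whitney type (the statement does not list them):
\begin{enumerate}[label=\textup{(\alph*)}]
\item $\Omega$ is covered by parallelepipeds $Q_j$ with bounded overlap;
\item a fixed dilate $\lambda_0 Q_j$ about its center lies in $\Omega$;
\item $\omega_\Omega\approx m(Q_j)$ on $Q_j$, and hence on neighbours;
\item each $Q_j$ meets boundedly many others.
\end{enumerate}
If Section~\ref{Sec2} lists further axioms, the same comparability estimates should be what is needed to check them.

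\textbf{Step 1: basic facts about Macbeath regions.} Since $\Omega$ is convex, $M(x)$ is a convex body symmetric about $x$ with $m(M(x))=\omega_\Omega(x)$. It is bounded because $\Omega$ contains no affine line. Indeed, an unbounded closed convex set of finite measure would contain a ray $x+\R_+v$, and the symmetry about $x$ would then force the line $x+\R v$ into $\overline{\Omega}$. Next I will prove the Ewald--Larman--Rogers engulfing lemma for all such $\Omega$, bounded or not:
\[
M_{1/5}(x)\cap M_{1/5}(y)\neq\emptyset\ \Longrightarrow\ M_{1/5}(y)\subset M(x).
\]
This is a purely convex-geometric argument, using only convexity of $\Omega$ and central symmetry of $M(\cdot)$, so boundedness of $\Omega$ is irrelevant. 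A corollary is that $\omega_\Omega(y)\approx\omega_\Omega(x)$, with constants depending only on $n$, whenever $y\in M_{1/5}(x)$. This uses the containment in both directions together with $m(M_\lambda)=\lambda^n m(M)$.

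\textbf{Step 2: covering and bounded overlap.} I will choose a maximal family $\{x_j\}$ with the $M_{1/10}(x_j)$ pairwise disjoint. To run this greedy/Vitali selection when $\Omega$ is unbounded and the regions have unbounded size, I will exhaust $\Omega$ by compact convex sets $K_1\subset K_2\subset\cdots$. On each compact set $\omega_\Omega$ is bounded below, so by disjointness only finitely many points need to be selected there, and I will extend the selection successively. Maximality plus engulfing gives that the $M_{1/5}(x_j)$ cover $\Omega$.

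Bounded overlap of the enlarged regions $M_{1/2}(x_j)$ then follows by a volume count. All regions meeting a given one have comparable volume and sit inside a fixed dilate of it, while their $1/10$-shrinks are disjoint. The same count bounds the number of neighbours, giving (d).

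\textbf{Step 3: passing to parallelepipeds.} For each symmetric convex body $M(x_j)$ I will take a parallelepiped $P_j$ centered at $x_j$ with $P_j\subset M_{1/5}(x_j)\subset c_nP_j$. This is possible, for example, via John's ellipsoid, inscribing and circumscribing a cube in the ellipsoid up to a factor $\sqrt n$. I then set $Q_j=c_nP_j\cap\ldots$; more precisely, I take $Q_j$ to be a fixed dilate of $P_j$ containing $M_{1/5}(x_j)$ and contained in $M_{1/2}(x_j)$, adjusting the constant $1/5$ so that this is possible. Then (a), (c) and (d) transfer from Step 2, and (b) holds because $M_\lambda(x_j)\subset\Omega$ for every $\lambda\le1$.

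\textbf{Main obstacle.} I expect the main obstacle to be the engulfing lemma and the volume comparability in the unbounded setting, since the classical proofs are stated for convex bodies. The first attempt will be to repeat the classical argument verbatim and check that it only uses $M(x)\subset\Omega$, convexity and symmetry. If it fails, the fallback is to apply the classical statement to $\Omega\cap R$ for large balls $R$ and let $R\to\infty$, using monotone convergence of $M(x)$.
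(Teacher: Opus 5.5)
Your construction is essentially the paper's: Macbeath regions, the engulfing lemma, a maximal family with disjoint shrunken regions, and John-type parallelepipeds. The problem is that you check it against a guessed Whitney-type list of axioms, and the paper's definition of admissibility contains conditions your plan does not reach.

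Axiom~4 requires that intersecting cells have comparable shape, $T_{j,i}A_k^l\subset C(-\frac12,\frac12)^n$. This does follow from engulfing, but only from the quantitative form at the scale where your cells live (they sit in $M_{1/2}(x_j)$, not $M_{1/5}(x_j)$):
\[
M_t(x)\cap M_t(y)\neq\emptyset\ \Longrightarrow\ M_t(x)\subset M_{t(3+t)/(1-t)}(y),\qquad t\in(0,1).
\]
You also need that the normalising affine map commutes with dilations about the centre. Your single statement at $t=1/5$ does not even give your covering claim. Maximality of a family with disjoint $M_{1/10}(x_j)$, combined with that statement, only yields $x\in M(x_j)$, not $x\in M_{1/5}(x_j)$; you need the general-$t$ version and tuned constants, as in the paper's choice $s\eta<\frac15$, $s>4C(n)$.

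The genuine gap is Axiom~5. For \emph{any} two cells $A_j^i, A_k^l$, possibly far apart and of very different sizes and orientations, the Minkowski midpoint $\frac12(A_j^i+A_k^l)$ may meet only boundedly many cells. Comparability of $\omega_\Omega$ on neighbouring cells says nothing about this, so ``the same comparability estimates'' do not suffice.

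The missing idea is that Macbeath regions respect midpoints. If $u\in M(x)$ and $v\in M(y)$, then $\frac{u+v}{2}\in\Omega$ and $(x+y)-\frac{u+v}{2}=\frac12\bigl((2x-u)+(2y-v)\bigr)\in\Omega$, hence
\[
\tfrac12\bigl(M_t(x)+M_t(y)\bigr)\subset M_t\bigl(\tfrac{x+y}{2}\bigr).
\]
From this the axiom follows. Set $m=\frac12(x_j^i+x_k^l)$.
\begin{enumerate}
\item Any cell meeting $\frac12(A_j^i+A_k^l)$ has its dilated Macbeath region $M_t(x_\beta^\gamma)$ meeting $M_t(m)$.
\item Engulfing then places that region inside a fixed dilate of $M(m)$, and forces $\omega_\Omega(x_\beta^\gamma)\approx\omega_\Omega(m)$.
\item The disjoint shrunken regions, each of volume $\approx\omega_\Omega(m)$, therefore pack into a set of volume $\approx\omega_\Omega(m)$, which bounds their number.
\end{enumerate}

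To complete your argument you need three further things:
\begin{enumerate}
\item this midpoint inclusion;
\item your cells relabelled by levels $a^j\approx m(Q_j)$, for Axioms~1--2;
\item constants chosen so that a fixed contraction of each $Q_j$ still contains its covering region, for Axiom~3.
\end{enumerate}
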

	We will repeatedly require this theorem in our study of Fourier inequalities.
	
	\subsection{Organization}
	Section \ref{Sec2} develops the admissibility theory for convex sets $\Omega$, and briefly introduces the associated Besov spaces of Paley--Wiener type. Section \ref{Sec3} studies boundary-weighted Fourier inequalities for general sets $\Omega$, in particular proving the necessary conditions stated in Theorem~\ref{thm:intro-necessary}. Section \ref{Sec4} is devoted to polyhedra, giving a complete characterization of the valid Fourier inequalities in this setting. In Section \ref{Sec5} we prove Theorem~\ref{ball} for the ball. Section~\ref{Sec6} explores the connections with the Kakeya conjecture and spherical restriction conjecture. Finally, in Section~\ref{Sec7} we consider the applications and links to Hankel operators on Paley--Wiener spaces.
	
	\subsection{Acknowledgments} The second author was supported by grant no. 334466 of the Research Council of Norway, "Fourier Methods and Multiplicative Analysis". The authors used ChatGPT 5.4/5.5/5.6 by OpenAI as a tool in the development of this paper. However, as a project started quite a while ago, it was primarily produced in the old-school way. The authors independently wrote and verified all parts of the article.
	
	%

	\section{The admissibility of convex sets and Besov spaces of Paley--Wiener type}\label{Sec2}
	\subsection{Admissibility}
	\par Let $\Omega\subset \mathbb{R}^n$ be an open convex set that does not contain affine lines. Then, for $x\in \Omega$, the set $\Omega\cap (2x-\Omega)$ is bounded and we can define the natural Fourier weight-function 
	$$\omega_{\Omega}(x)=m(\Omega\cap (2x-\Omega))<\infty.$$
	Accordingly, a framework for Besov spaces of Paley--Wiener type, consisting of distributions with Fourier support in $\Omega$, was introduced in \cite{bampouras2024besovspacesschattenclass}. In order for these Besov spaces to be useful in the study of convolution-type operators on $\PW(\Omega)$, the set $\Omega$ needs to satisfy a minimal set of geometric axioms. If $\Omega$ satisfies these axioms, it is called admissible (or $a-$admissible). In \cite{bampouras2024besovspacesschattenclass} only certain special cases were treated, but we will begin by showing that every set is actually admissible.
	\begin{theo}\label{admissibility}
		Every convex set $\Omega\subset \mathbb{R}^n$ that does not contain affine lines is $a$-admissible, for every $a>1$.
	\end{theo}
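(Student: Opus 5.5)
The plan is to build the required decomposition of $\Omega$ into parallelepipeds directly from the local symmetric bodies that already define the weight. For $x\in\Omega$ put
\[
K_x=\Omega\cap(2x-\Omega),
\]
a bounded, open convex body that is centrally symmetric about $x$, with $m(K_x)=\omega_\Omega(x)$. I recall the admissibility axioms of \cite{bampouras2024besovspacesschattenclass} (Section~\ref{Sec2}) only in their essential form: a covering of $\Omega$ by parallelepipeds $R_j$ such that the $a$-fold dilates of the $R_j$ about their centres lie in $\Omega$, these dilates have bounded overlap, and $m(R_j)\approx\omega_\Omega(x)$ for $x\in R_j$. The plan targets exactly these properties, so its details may need adjusting once the precise axioms are in view.

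\textbf{Step 1: a parallelepiped for each point.} By John's theorem there is an ellipsoid $E_x$ centred at $x$ with
\[
E_x\subset K_x\subset\sqrt n\,(E_x-x)+x .
\]
Inside $E_x$ I inscribe the cube adapted to its principal axes. This gives a parallelepiped $Q_x$, centred at $x$, with $m(Q_x)\approx_n\omega_\Omega(x)$. Fixing $a>1$, I shrink it to $R_x=x+(Q_x-x)/(Ca)$, with $C$ large. Then
\[
x+a(R_x-x)\subset K_x\subset\Omega,
\]
and $m(R_x)\approx_{n,a}\omega_\Omega(x)$.

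\textbf{Step 2: engulfing and doubling.} The key geometric lemma is a stability statement. If $y\in x+\varepsilon(K_x-x)$ with $\varepsilon=\varepsilon(n)$ small, then
\[
y+c(K_x-x)\subset K_y\subset y+C(K_x-x),
\]
with constants depending only on $n$. Granting this, the volume comparability $\omega_\Omega(y)\approx\omega_\Omega(x)$ on $R_x$ follows at once. It also shows that the shapes of $R_x$ and $R_y$ are affinely comparable whenever the two overlap.

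\emph{Lower inclusion.} Since $K_x$ is symmetric about $x$, for $y$ close to $x$ the set $y+c(K_x-x)$ lies in $K_x\cap(2y-K_x)$. This set is contained in $K_y$, because $K_x\subset\Omega$ and $2y-K_x\subset 2y-\Omega$.

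\emph{Upper inclusion.} This is the genuinely convex-geometric part, and I expect it to be the main obstacle, especially for unbounded $\Omega$. One has to rule out that $K_y$ becomes much longer than $K_x$ in some direction. I would argue by contradiction. Suppose $z\in K_y$ lies far outside $y+C(K_x-x)$. By convexity of $\Omega$, the segments from $z$ and from $2y-z$ to points of $K_x$ lie in $\Omega$. Using the central symmetry of $K_x$ about a point near $y$, one then produces a long segment through $x$ that lies in both $\Omega$ and $2x-\Omega$. This contradicts the fact that $\sqrt n\,(E_x-x)+x$ contains $K_x$. Normalising $K_x$ to be a ball by an affine map makes this a compactness-free, quantitative argument with constants depending only on $n$. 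The affine invariance of the whole setup is what allows constants independent of $\Omega$.

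\textbf{Step 3: selection and bounded overlap.} By a Vitali/Besicovitch-type selection applied to the family $\{R_x\}_{x\in\Omega}$, I extract a subfamily $\{R_{x_j}\}$ with two properties: the shrunken sets $x_j+\delta(R_{x_j}-x_j)$ are pairwise disjoint, and the $R_{x_j}$ cover $\Omega$. The disjointness is available because of the engulfing property of Step 2, which makes $\{R_x\}$ behave like a space of homogeneous type.

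Bounded overlap of the dilates $x_j+a(R_{x_j}-x_j)$ then follows from a volume count. All $R_{x_j}$ whose $a$-dilates contain a given point $y$ are, by Step 2, comparable to $K_y$ and contained in $y+C'(K_y-y)$. Their disjoint shrunken cores have volume $\approx m(K_y)$, so their number is bounded by a constant depending on $n$ and $a$. Any remaining axioms, such as neighbouring parallelepipeds being affinely comparable, follow from the same comparability, which completes the plan for Theorem~\ref{admissibility}.
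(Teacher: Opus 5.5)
Your construction is essentially the paper's. It uses Macbeath regions $K_x=M(x)$, a John parallelepiped in a small dilate, an engulfing lemma (the paper's Lemmas~\ref{easy} and \ref{Macbeath}), a maximal family with disjoint cores, and a packing count.

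\textbf{The gap.} You verify a reconstructed list of properties instead of the actual axioms. The one you never address, Axiom~5, does not follow from ``the same comparability''. It requires that for \emph{arbitrary}, possibly far-apart pieces $A_j^i,A_k^l$, the midpoint set $\frac12(A_j^i+A_k^l)$ meets only boundedly many pieces. Bounded overlap at a point and comparability of neighbours say nothing about where this set sits, or how large it is relative to the local scale there. The missing idea is that Macbeath regions respect midpoints:
\[
\tfrac12\bigl(M(x,t)+M(y,t)\bigr)\subset M\bigl(\tfrac{x+y}{2},t\bigr).
\]
This holds because $x\pm u,\,y\pm v\in\Omega$ implies $\frac{x+y}{2}\pm\frac{u+v}{2}=\frac12\bigl((x\pm u)+(y\pm v)\bigr)\in\Omega$.

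\textbf{How to close it.} Your pieces lie in $M(x_j,\varepsilon)$ with $\varepsilon$ small, so any piece meeting $\frac12(A+A')$ meets $M(\frac{x+y}{2},\varepsilon)$. Applying your engulfing twice through a common point, that piece lies in $M(\frac{x+y}{2},C\varepsilon)$ with comparable $\omega_\Omega$. Your volume count then bounds the number of such pieces. This is exactly how the paper verifies Axiom~5. The paper needs one extra step, bounding the number of levels $\beta$ involved, because its maximal families are disjoint only within each $\Delta_j$. Your global selection would avoid that step.

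\textbf{Two smaller remarks.} First, in the definition $a$ is the base of the scale ($\omega_\Omega\approx a^j$ on $A_j^i$ and $m(A_j^i)\approx a^j$, Axioms~1--2), not a dilation factor. You can simply assign each piece the level $j=\lfloor\log_a\omega_\Omega(x_j)\rfloor$. The role you give $a$ is played by the contraction parameter $\epsilon$ of Axiom~3.

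Second, the upper inclusion in Step~2, which you flag as the main obstacle, is a one-line convex combination. If $y=x+\varepsilon u$ with $x\pm u\in\Omega$, and $y\pm w\in\Omega$, then
\[
x\pm\tfrac{w}{1+\varepsilon}=\tfrac{1}{1+\varepsilon}(y\pm w)+\tfrac{\varepsilon}{1+\varepsilon}(x-u)\in\Omega ,
\]
so $K_y-y\subset(1+\varepsilon)(K_x-x)$. Your lower inclusion, in the form $(1-\varepsilon)(K_x-x)\subset K_y-y$, follows by a similar combination. Together they give the volume bounds of Lemma~\ref{easy} without using the concavity of $\omega_\Omega^{1/n}$.
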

	
	The definition of admissibility is as follows. An open convex set is $a$-admissible, $a > 1$, if it admits a family of parallelepipeds $\{A_j^i\}$, the indices $(i,j)$ running through a subset of $\mathbb{N} \times \mathbb{Z}$, that satisfies the following axioms:
	\begin{enumerate}[label=\textbf{Axiom \arabic*.}, leftmargin=*]
		\item \label{Axiom1} There exists $M>0$ such that $A_j^i\subset \omega_{\Omega}^{-1}([a^{j-M},a^{j+M}])$.
		\item \label{Axiom2}There exist $c_1,c_2>0$ such that $$c_1a^j\leq m(A_j^i)\leq c_2 a^j.$$
		\item \label{Axiom3} There exists an $\epsilon\in (0,1/2)$ such that
		$$\Omega\subset \bigcup_{j,i}T_{j,i}^{-1}(-\frac{1}{2}+\epsilon,\frac{1}{2}-\epsilon)^n,$$
		where $T_{j,i}$ is the affine bijection that maps $A_j^i$ onto $(-\frac{1}{2},\frac{1}{2})^n$.
		\item \label{Axiom4} There exists $C>0$ such that
		$$T_{j,i}A_k^l\subset C(-\frac{1}{2},\frac{1}{2})^n,$$ whenever $A_j^i\cap A_k^l\neq\emptyset$.
		\item \label{Axiom5} The sets 
		$$J_{j,i,k,l}=\{(\beta,\gamma):A_\beta^\gamma \cap \frac{1}{2}(A_j^i+A_k^l)\neq \emptyset \},$$ are of bounded cardinality, uniformly for every choice of indices $j,i,k,l$.
	\end{enumerate}
	These axioms for the cover $\{A^i_j\}$ should be relatively self-explanatory. We note, however, that  Axiom 4 implies that adjacent parallelepipeds in the cover have similar orientation and size.
	
	To prove Theorem~\ref{admissibility} we first need to introduce some notation. For $x\in \Omega$ let $M(x)=\Omega\cap(2x-\Omega)$ be the Macbeath region at $x$ \cite{Macbeath1952}, the largest $x$-symmetric subset of $\Omega$ containing $x$, and
	let $M(x,c) =(1-c)x+cM(x)$ be the dilation of $M(x)$ by $c$ with respect to its center $x$. We will also need the following two lemmas.
	\begin{lm}\label{easy}
		Let $x,y\in \Omega$, $t\in (0,1)$ with $x\in M(y,t)$. Then $$(1-t)^n \omega_\Omega(y)\leq \omega_\Omega(x) \leq (1+t)^n\omega_\Omega(y).$$
	\end{lm}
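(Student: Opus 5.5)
The plan is to prove both inequalities by comparing the two Macbeath regions directly through convex combinations. We will exhibit a scaled copy of $M(y)$ inside $M(x)$, and a scaled copy of $M(x)$ inside $M(y)$. Set $K = M(y) - y$. This is a convex set, symmetric about the origin, with $m(K) = \omega_\Omega(y)$. It is bounded because $\Omega$ contains no affine lines. The hypothesis $x \in M(y,t)$ means $x = y + tu$ for some $u \in K$. In particular $y \pm u \in \Omega$, since both $y+u$ and $2y-(y+u)=y-u$ lie in $M(y) \subset \Omega$.

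\emph{Lower bound.} We claim that $x + (1-t)K \subset M(x)$. Let $w \in K$. Then
\[
x + (1-t)w = y + \bigl(tu + (1-t)w\bigr).
\]
Since $K$ is convex, $tu + (1-t)w \in K$, so $x+(1-t)w \in M(y) \subset \Omega$. Since $K$ is symmetric, $-w \in K$, and the same argument gives $x - (1-t)w \in \Omega$. The point $x-(1-t)w$ is exactly the reflection $2x - (x+(1-t)w)$. Hence $x+(1-t)w \in \Omega \cap (2x-\Omega) = M(x)$. Taking Lebesgue measure yields $\omega_\Omega(x) \geq (1-t)^n m(K) = (1-t)^n\omega_\Omega(y)$.

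\emph{Upper bound.} We claim that $y + \frac{1}{1+t}\bigl(M(x)-x\bigr) \subset M(y)$. Let $v$ satisfy $x \pm v \in \Omega$. Using $x = y+tu$, a direct computation gives the two identities
\[
y + \tfrac{v}{1+t} = \tfrac{1}{1+t}(x+v) + \tfrac{t}{1+t}(y-u), \qquad y - \tfrac{v}{1+t} = \tfrac{1}{1+t}(x-v) + \tfrac{t}{1+t}(y-u).
\]
In each identity the right-hand side is a convex combination of points of $\Omega$. Since $\Omega$ is convex, both $y \pm \frac{v}{1+t}$ lie in $\Omega$, and therefore $y + \frac{v}{1+t} \in M(y)$. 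Taking measures gives $(1+t)^{-n}\omega_\Omega(x) \leq \omega_\Omega(y)$, which is the upper bound.

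The only step that takes any care is finding the right convex combination in the upper bound. The natural first attempt pairs $x-v$ with $y+u$, and it fails. The fix is to use the point $y-u$ in both identities, which cancels the $tu$ term coming from $x$. Everything else is routine.
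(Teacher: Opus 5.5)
Your proof is correct. It rests on the same two convex decompositions as the paper: $x=(1-t)y+tz$ and $y=\frac{1}{1+t}x+\frac{t}{1+t}(2y-z)$, where $z=y+u\in M(y)$.

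The routes differ in how those decompositions are used. The paper feeds them into the concavity of $\omega_\Omega^{1/n}$, a Brunn--Minkowski-type fact quoted from \cite[Lemma 2.1]{bampouras2024besovspacesschattenclass}. It then discards the term involving $\omega_\Omega(z)$, respectively $\omega_\Omega(2y-z)$.

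You instead note that, since that term is thrown away anyway, the regions $M(z)$ and $M(2y-z)$ can be replaced by the single points $z$ and $2y-z$. The needed inclusions are then
\[
(1-t)M(y)+tz \subset M(x), \qquad \tfrac{1}{1+t}M(x)+\tfrac{t}{1+t}(2y-z)\subset M(y),
\]
which are exactly your $x+(1-t)K\subset M(x)$ and $y+\frac{1}{1+t}(M(x)-x)\subset M(y)$. These can be checked by hand, and measuring them needs only the translation and dilation invariance of Lebesgue measure.

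What each buys: the paper's version is shorter, given the cited concavity lemma. Yours is self-contained and avoids Brunn--Minkowski altogether. It also gives a slightly stronger geometric conclusion, namely that homothetic copies of the Macbeath regions are nested; in fact you show $x+(1-t)(M(y)-y)\subset M(x)\cap M(y)$. This is in the same spirit as the containments proved in Lemma~\ref{Macbeath}.
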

	\begin{proof}
		Let $z\in M(y)$ such that $x=(1-t)y+tz$. By the concavity of $\omega_{\Omega}^{1/n}$ \cite[Lemma 2.1]{bampouras2024besovspacesschattenclass},
		$$\omega_{\Omega}(x)\geq \left((1-t)\omega^{1/n}_{\Omega}(y)+t \omega_{\Omega}^{1/n}(z)\right)^n\geq (1-t)^n\omega_{\Omega}(y).$$
		For the upper bound, let us observe that
		$$x=(1-t)y+tz= (1+t)y-t(2y-z).$$
		Again by concavity, since $2y-z\in \Omega$ we get that
		$$ \omega_{\Omega}(y)\geq (\frac{1}{1+t}\omega_{\Omega}^{1/n}(x)+\frac{t}{1+t}\omega_{\Omega}^{1/n}(2y-z))^{n}\geq (\frac{1}{1+t})^n\omega_{\Omega}(x).$$
		The proof is complete.
	\end{proof}
	The next key result is analogous to that of \cite[Lemma~6.2]{MR2327291}.
	\begin{lm}\label{Macbeath}
		Let $x,y\in \Omega$. For $t\in (0,1)$, if $$M(x,t)\cap M(y,t)\neq\emptyset,$$ then 
		$$M(x,t)\subset M(y,\frac{t(3+t)}{1-t}).$$
	\end{lm}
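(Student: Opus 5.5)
Throughout, write $K_z = M(z)-z$ for $z\in\Omega$. Since $M(z)=\Omega\cap(2z-\Omega)$, the set $K_z$ is convex and symmetric, and $q\in K_z$ exactly when $z\pm q\in\Omega$. In this notation $M(z,c)=z+cK_z$, which also makes sense for $c>1$. The plan is to pick a point $z\in M(x,t)\cap M(y,t)$, compare the three symmetric bodies $K_x$, $K_z$, $K_y$ using only convexity of $\Omega$, and then write any $p\in M(x,t)$ as $y+(p-x)+(x-z)+(z-y)$.

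\emph{Step 1: $K_z\subset(1+t)K_y$.} Since $z\in M(y,t)$, write $z=y+tv$ with $v\in K_y$. In particular $y-v\in\Omega$. Let $q\in K_z$, so $z\pm q\in\Omega$. With $\alpha=\frac{1}{1+t}$ (chosen so that $\alpha t=1-\alpha$) the convex combination
\[
\alpha(z+q)+(1-\alpha)(y-v)=y+(\alpha t-(1-\alpha))v+\alpha q=y+\tfrac{q}{1+t}
\]
lies in $\Omega$. Replacing $q$ by $-q$ shows $y\pm \frac{q}{1+t}\in\Omega$. Hence $\frac{q}{1+t}\in K_y$, i.e.\ $q\in(1+t)K_y$.

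\emph{Step 2: $(1-t)K_x\subset K_z$.} Since $z\in M(x,t)$, write $z=x+tu$ with $u\in K_x$. For $w\in K_x$ we have $z+(1-t)w=t(x+u)+(1-t)(x+w)\in\Omega$. Similarly $z-(1-t)w=t(x+u)+(1-t)(x-w)\in\Omega$, using $x-w\in\Omega$. So $(1-t)w\in K_z$. Combining with Step 1 gives $K_x\subset\frac{1+t}{1-t}K_y$.

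\emph{Step 3: conclusion.} Let $p=x+tw$ with $w\in K_x$ be any point of $M(x,t)$. Then
\[
p-y=tw-tu+tv .
\]
By Step 2, $tw-tu\in 2t\frac{1+t}{1-t}K_y$, using convexity and symmetry of $K_y$. Also $tv\in tK_y$. Adding and using convexity again,
\[
p-y\in t\Bigl(\tfrac{2(1+t)}{1-t}+1\Bigr)K_y=\tfrac{t(3+t)}{1-t}K_y,
\]
which is exactly $p\in M\bigl(y,\frac{t(3+t)}{1-t}\bigr)$.

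The only delicate step is Step 1: one must choose the partner point $y-v$ rather than $2y-z$. Using $2y-z$ would give only the cruder factor $2$ instead of $1+t$, and the stated constant would not come out. Everything else is routine convexity bookkeeping.
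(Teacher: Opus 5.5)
Your proof is correct, and it is organized differently from the paper's.

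The paper's route is a direct computation. It fixes $w\in M(x,t)$ and defines $e$ by $w=(1-\Lambda)y+\Lambda e$, where $\Lambda=\frac{t(3+t)}{1-t}$. It then checks $e\in\Omega$ and $2y-e\in\Omega$ separately. For each, it uses the relation $x=y+\frac{t}{1-t}(v-u)$ to write the point as an explicit convex combination of points already known to lie in $\Omega$. In the paper's notation, $u,v,a$ are points of $M(x),M(y),M(x)$ rather than vectors. The combinations use $y,\,2x-u,\,v,\,a$ for $e$, and $2y-v,\,2x-a,\,u$ for $2y-e$.

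Your route instead passes through the Macbeath body at the common point $z$. You prove the two comparisons $(1-t)K_x\subset K_z\subset(1+t)K_y$ and finish with a Minkowski-sum estimate. Because $K_y$ is symmetric, the single inclusion $p-y\in\Lambda K_y$ handles $e$ and $2y-e$ at once.

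What the paper's version gives is a self-contained check, but it does not show where the constant comes from. What yours gives:
\begin{itemize}
\item it explains the constant as $\Lambda=t\bigl(1+2\cdot\frac{1+t}{1-t}\bigr)$;
\item it shows why the partner point $y-v$ is the efficient choice;
\item it yields the comparison $K_x\subset\frac{1+t}{1-t}K_y$ of the Macbeath regions themselves. This gives $\omega_\Omega(x)\le\bigl(\frac{1+t}{1-t}\bigr)^n\omega_\Omega(y)$ whenever $M(x,t)\cap M(y,t)\neq\emptyset$, with no appeal to the concavity used in Lemma~\ref{easy}.
\end{itemize}

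If you unwind your steps, you get a convex-combination formula for $e$ that differs from the paper's. Both are valid, since the representation is not unique because of the linear relation among $x,y,u,v$.
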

	\begin{proof}
		Take $w\in M(x,t)$, letting $a\in M(x)$ be such that $w=(1-t)x+ta$. Let $\Lambda=\frac{t(3+t)}{1-t}$, and let $e$ be such that $w=(1-\Lambda)y+\Lambda e$. To prove the lemma, it suffices to show that $e\in M(y)$. 
		
		Let $z\in M(x,t)\cap M(y,t)$, with associated $u\in M(x)$ and $ v\in M(y)$ such that $z=(1-t)x+tu=(1-t)y+tv$. Using the fact that $x=y+\frac{t}{1-t}(v-u)$, we compute
		\begin{eqnarray}
			e&=&\frac{1}{\Lambda}w-\frac{(1-\Lambda)}{\Lambda}y=\frac{1-t}{\Lambda}x+\frac{t}{\Lambda}a-\frac{(1-\Lambda)}{\Lambda}y \nonumber  \\
			&=&\frac{1-t}{\Lambda}(y+\frac{t}{1-t}(v-u))+\frac{t}{\Lambda}a-\frac{(1-\Lambda)}{\Lambda}y \nonumber \\
			&=&\frac{\Lambda-t}{\Lambda}y+\frac{t}{\Lambda}v-\frac{t}{\Lambda}u+\frac{t}{\Lambda}a.\nonumber
		\end{eqnarray}
		Now let us observe that 
		\begin{align} 
			\frac{1-t}{1+t}(2x-u)&-\frac{2(1-t)}{1+t}y-\frac{2t}{1+t}v \nonumber \\
			&= \frac{1-t}{1+t}(2x-u)-\frac{2(1-t)}{1+t}(x+\frac{t}{1-t}(u-v))-\frac{2t}{1+t}v=-u \nonumber
		\end{align} 
		Combining the above equalities we derive that
		\begin{eqnarray}
			e &=& \frac{\Lambda-t}{\Lambda}y+\frac{t}{\Lambda}v+\frac{t}{\Lambda}(\frac{1-t}{1+t}(2x-u)-\frac{2(1-t)}{1+t}y-\frac{2t}{1+t}v )+\frac{t}{\Lambda}a  \nonumber \\
			&=&\frac{\Lambda+\Lambda t-3t+t^2}{\Lambda(1+t)}y+\frac{t(1-t)}{\Lambda(1+t)}(2x-u) +\frac{t(1-t)}{\Lambda(1+t)}v+\frac{t}{\Lambda}a. \nonumber 
		\end{eqnarray}
		If we write the above expression as 
		$$e=Ay+B(2x-u)+Bv+Ca,$$ then we can notice that 
		$A,B,C>0$ and $A+2B+C=1$. Now since $y,2x-u,v,a\in \Omega$ we conclude that $e\in \Omega$.
		
		It remains to show that $2y-e\in \Omega$. From the expression
		$$e=\frac{\Lambda-t}{\Lambda}y+\frac{t}{\Lambda}v-\frac{t}{\Lambda}u+\frac{t}{\Lambda}a,$$
		we get
		$$2y-e=\frac{\Lambda+t}{\Lambda}y-\frac{t}{\Lambda}v+\frac{t}{\Lambda}u-\frac{t}{\Lambda}a .$$
		Using now the fact that $x=y+\frac{t}{1-t}(v-u)$ and that $\Lambda=\frac{t(3+t)}{1-t}$ we can compute
		\begin{eqnarray}
			2y-e&=&\frac{\Lambda+t}{\Lambda}y-\frac{t}{\Lambda}v+\frac{t}{\Lambda}u-\frac{t}{\Lambda}a \nonumber \\
			&=& 	\frac{4}{3+t}y-\frac{1-t}{3+t}v+\frac{1-t}{3+t}u-\frac{1-t}{3+t}a \nonumber \\
			&=&\frac{2(1+t)}{3+t}y-\frac{1+t}{3+t}v+\frac{2(1-t)}{3+t} \left(y+\frac{t}{1-t}(v-u)\right)-\frac{1-t}{3+t}a+\frac{1+t}{3+t}u\nonumber\\
			&=&\frac{2(1+t)}{3+t}y-\frac{1+t}{3+t}v+\frac{2(1-t)}{3+t}x-\frac{1-t}{3+t}a+\frac{1+t}{3+t}u\nonumber\\
			&=& \frac{1+t}{3+t}(2y-v)+\frac{1-t}{3+t}(2x-a)+	\frac{1+t}{3+t}u.	\nonumber
		\end{eqnarray}
		Since $\frac{1+t}{3+t},\frac{1-t}{3+t},\frac{1+t}{3+t}>0$, $\frac{1+t}{3+t}+\frac{1-t}{3+t}+\frac{1+t}{3+t}=1$ and $2y-v,2x-a,u\in\Omega$, the proof is complete.
	\end{proof}
	We now prove the main result of this subsection.
	\begin{proof}[Proof of Theorem \ref{admissibility}]
		We will apply John's theorem \cite{John1948}, which implies that for any open convex body $K$, symmetric around its center $x$, there is a dimension-dependent constant $C(n) > 1$ and a parallelepiped $P$ such that
		$$P \subset K \subset (1-C(n)) x  + C(n)P.$$ For the rest of the proof, we fix two constants $s$ and $\eta$ such that 
		$$s\eta<\frac{1}{5}, \quad s>4C(n).$$
		
		For each $j \in \mathbb{Z}$ such that $\Delta_j := \omega_\Omega^{-1}((a^{j-1}, a^{j+1}))$ is non-empty, we let
		$$X_j=\{x_j^i\in \Delta_j \, : \, i\in I_j\}$$
		be a maximal family such that the sets $M(x_j^i,\eta)$ are pairwise disjoint, where $I_j \subset \mathbb{N}$ is the corresponding index set. By John's Theorem, there is for each $x_j^i \in X_j$ a parallelepiped $P_j^i$ centered at $x_j^i$ such that $$P_j^i\subset M(x_j^i,\eta)\subset (1-C(n))x_j^i+C(n)P_j^i.$$
		We will show that 
		$$\{A_j^i\} := \{(1-s)x_j^i+sP_j^i\},$$
		is a cover of the desired type, by verifying each axiom in turn.
		
		\ref{Axiom1}
		Let us first observe that
		$$A_j^i\subset M(x_j^i,s\eta).$$
		By Lemma \ref{easy}, for $x\in A_j^i$,
		$$a^{j-1}(1-s\eta)^n\leq (1-s\eta)^n\omega_\Omega(x_j^i)\leq \omega_\Omega(x)\leq (1+s\eta)^n\omega_\Omega(x_j^i)\leq a^{j+1}(1+s\eta)^n.$$
		\par \ref{Axiom2}
		By definition,
		$$m(A_j^i)\approx m(P_j^i)\approx m(M(x_j^i,\eta))\approx m(M(x_j^i))\approx a^j.$$
		\par  \ref{Axiom3}
		Note that
		\begin{align*}
			M(x_j^i,4\eta) &=4M(x_j^i,\eta)-3x_j^i \\ &\subset (1-4C(n))x_j^i+4C(n)P_j^i=(1-\frac{4C(n)}{s})x_j^i+\frac{4C(n)}{s}A_j^i.
		\end{align*} 
		The right-hand side is the contraction of $A_j^i$ by $\frac{4C(n)}{s}<1,$ which coincides with $T_{j,i}^{-1}(-1/2+\epsilon,1/2-\epsilon)^n$ for some $\epsilon=C(n,s).$
		To verify the axiom, it thus suffices to show that $$\Delta_j\subset \bigcup_{i\in I_j}M(x_j^i,4\eta).$$
		
		Let $x\in \Delta_j$. Then, by the maximality of $X_j$, there is $l\in I_j$ such that $M(x,\eta)\cap M(x_j^l,\eta)\neq \emptyset$. Thus by Lemma \ref{Macbeath}, $$x\in M(x,\eta)\subset M(x_j^l,\frac{\eta(3+\eta)}{1-\eta})\subset M(x_j^l,4\eta),$$ where we used that $\eta<\frac{1}{5}$ in the final step.
		\par  \ref{Axiom4}
		Suppose that $A_j^i\cap A_k^l\neq \emptyset$. By definition, $$A_j^i\subset M(x_j^i,s\eta),\quad A_k^l\subset  M(x_k^l,s\eta),$$ and thus, by Lemma \ref{Macbeath} and the fact that $s\eta<1/5$, we get that
		$$A_k^l\subset M(x_k^l,s\eta)\subset M(x_j^i,4s\eta).$$
		Now taking $T_{j,i}$ and observing that it respects dilations with respect to the center $x_j^i$, we get that
		$$T_{j,i}A_k^l\subset T_{j,i}M(x_j^i,4s\eta)=4s T_{j,i}M(x_j^i,\eta)\subset 4sC(n)T_{j,i}P_j^i=4C(n)T_{j,i}A_j^i,$$
		as desired.
		\par \ref{Axiom5}
		Let us take $A_j^i,A_k^l$ and $A_\beta^\gamma$ with $A_\beta^\gamma \cap \frac{1}{2}(A_j^i+A_k^l)\neq \emptyset$, that is, $(\beta, \gamma) \in J_{j,i,k,l}$.
		Since $A_j^i\subset M(x_j^i,s\eta)$, a simple observation shows 
		$$\frac{1}{2}(A_j^i+A_k^l)\subset \frac{1}{2}(M(x_j^i,s\eta)+M(x_k^l,s\eta))\subset M(\frac{x_j^i+x_k^l}{2},s\eta).$$
		Therefore $M(x_\beta^\gamma,s\eta)\cap M(\frac{x_j^i+x_k^l}{2},s\eta)\neq \emptyset$,
		and thus, by Lemma \ref{Macbeath} again,
		$$M(x_\beta^\gamma,s\eta)\subset M(\frac{x_j^i+x_k^l}{2},4s\eta).$$
		By Lemma \ref{easy}, we deduce that $$\omega_{\Omega}(x_\beta^\gamma)\approx \omega_{\Omega}(\frac{x_j^i+x_k^l}{2}).$$
		Let $ J'_{j,i,k,l}=\{\beta: \text{ there is $\gamma$ such that } (\beta,\gamma)\in J_{j,i,k,l}\}$. For $(\beta,\gamma),(\beta',\gamma')\in J_{j,i,k,l}$ we have just seen that $\omega_{\Omega}(x_\beta^\gamma)\approx \omega_{\Omega}(x_{\beta'}^{\gamma'})$, and thus $|\beta - \beta'|$ is uniformly bounded across all such pairs. Therefore
		the sets $J'_{j,i,k,l}$ have uniformly bounded cardinality. Therefore
		\begin{eqnarray}
			\omega_{\Omega}(\frac{x_j^i+x_k^l}{2})&\approx& m(M(\frac{x_j^i+x_k^l}{2},4s\eta))
			\geq m(\bigcup_{(\beta,\gamma)\in J_{j,i,k,l}} M(x_\beta^\gamma,\eta) )  \nonumber \\
			&\geq& (\# J'_{j,i,k,l})^{-1} \sum_{\beta\in J'_{j,i,k,l}} m(\bigcup_{\gamma: (\beta,\gamma)\in J_{j,i,k,l}} M(x_\beta^\gamma,\eta) )\nonumber \\
			&=& (\# J'_{j,i,k,l})^{-1} \sum_{\beta\in J'_{j,i,k,l}}\sum_{\gamma: (\beta,\gamma)\in J_{j,i,k,l}}  m( M(x_\beta^\gamma,\eta) )\nonumber \\
			&\gtrsim& \sum_{\beta\in J'_{j,i,k,l}}\sum_{\gamma: (\beta,\gamma)\in J_{j,i,k,l}}  \omega_\Omega(x_\beta^\gamma)\approx \#J_{j,i,k,l} \omega_\Omega(\frac{x_j^i+x_k^l}{2}). \nonumber
		\end{eqnarray}  
		This demonstrates the final axiom and completes the proof.
	\end{proof}
	\subsection{Besov spaces} \label{subsec:besov}
	Having shown that every open convex set $\Omega$ not containing affine lines is admissible, we can employ the Besov spaces of Paley--Wiener type constructed in \cite{bampouras2024besovspacesschattenclass}. Let $\{A_j^i\}$ be a family of parallelepipeds satisfying \ref{Axiom1} -- \ref{Axiom5} Then, by \cite[Proposition 2.6]{bampouras2024besovspacesschattenclass}, we can find a family of smooth functions $\{\phi_j^i\}$ that satisfies
	\begin{enumerate}
		\item $\supp\widehat{\phi}_j^i\subset A_j^i$,
		\item $\sum_{j,i}\widehat{\phi}_j^i=\chi_{\Omega}$ and
		\item $\sup_{j,i}\|\phi_j^i\|_{L^1}<\infty$.
	\end{enumerate}
	We will denote the collection of all such Fourier-partitions by $\mathscr{P}(A_j^i)$. 
	
	Let $1\leq p,q\leq \infty$ and $s\in \mathbb{R}$. The Besov spaces 
	we are interested in consist of distributions, and are obtained through completion of $\mathcal{F}^{-1}C_c^\infty(\Omega)$ in the norm
	$$\|f\|_{B_{p,q}^{s}(\Omega)}:=\left(\sum_{j,i}a^{jsq}\|f\ast\phi_j^i\|_{L^p}^q\right)^{1/q}.$$ 
	We refer to \cite[Lemma 2.7]{bampouras2024besovspacesschattenclass} for the details, and familiar properties of duality and complex interpolation. We will use this subsection to record two simple additional estimates.
	\begin{lm}\label{estim}
		Let $\mathscr{P}(A_j^i)$ be a valid Fourier-partition. Then there exists $C>0$, independent of $j,i$, such that $$\|\phi_j^i\|_{L^p}\leq C a^{j(1-\frac{1}{p})}, \qquad p \geq 1, \; \phi_j^i \in \mathscr{P}(A_j^i).$$ 
	\end{lm}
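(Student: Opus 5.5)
The plan is to interpolate between the two endpoint bounds $\|\phi_j^i\|_{L^1}\lesssim 1$ and $\|\phi_j^i\|_{L^\infty}\lesssim a^j$. The $L^1$ bound is exactly property (3) of the partition $\mathscr{P}(A_j^i)$, namely $\sup_{j,i}\|\phi_j^i\|_{L^1}<\infty$.

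For the $L^\infty$ bound we use the Fourier support. Since $\supp\widehat{\phi}_j^i\subset A_j^i$, Fourier inversion gives
\[
\|\phi_j^i\|_{L^\infty}\leq \|\widehat{\phi}_j^i\|_{L^1}\leq \|\widehat{\phi}_j^i\|_{L^\infty}\, m(A_j^i).
\]
Moreover $\|\widehat{\phi}_j^i\|_{L^\infty}\leq \|\phi_j^i\|_{L^1}\leq C$, again by property (3). Combining this with \ref{Axiom2}, which gives $m(A_j^i)\leq c_2a^j$, we obtain $\|\phi_j^i\|_{L^\infty}\leq Cc_2 a^j$.

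We then conclude with the elementary Hölder-type interpolation
\[
\|\phi\|_{L^p}^p=\int|\phi|^{p-1}|\phi|\leq \|\phi\|_{L^\infty}^{p-1}\|\phi\|_{L^1},
\]
which yields
\[
\|\phi_j^i\|_{L^p}\leq (Cc_2a^j)^{1-\frac1p}C^{\frac1p}\leq C' a^{j(1-\frac1p)}.
\]
Here $C'$ depends only on $C$ and $c_2$, so it is uniform in $j$, $i$ and $p\geq1$.

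No step is a genuine obstacle. The only points to check are that property (3) is a uniform bound over the whole partition, and that the constant in \ref{Axiom2} does not depend on the indices, both of which hold by definition.
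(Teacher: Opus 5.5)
Your argument is correct, but it takes a different route from the paper.

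\textbf{The paper's route.} The paper fixes a Schwartz function $\phi$ with $\widehat{\phi}=1$ on $(-\frac12,\frac12)^n$ and writes the reproducing identity $\phi_j^i=\phi_j^i\ast\mathcal{F}^{-1}(\widehat{\phi}\circ T_{j,i})$. It then applies Young's inequality, so the bound comes from the exact scaling $\|\mathcal{F}^{-1}(\widehat{\phi}\circ T_{j,i})\|_{L^p}=|\det T_{j,i}|^{\frac1p-1}\|\phi\|_{L^p}$ together with $|\det T_{j,i}|^{-1}=m(A_j^i)$.

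\textbf{Your route.} You obtain the two endpoint estimates $\|\phi_j^i\|_{L^1}\lesssim 1$ and $\|\phi_j^i\|_{L^\infty}\le\|\widehat{\phi}_j^i\|_{L^1}\le m(A_j^i)\|\phi_j^i\|_{L^1}\lesssim a^j$. You then interpolate by log-convexity of $L^p$-norms. The inversion step is legitimate: $\widehat{\phi}_j^i$ is bounded with bounded support, hence integrable, and $\phi_j^i$ is continuous.

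\textbf{Comparison.} Your version is more elementary. It uses only the upper bound $m(A_j^i)\le c_2a^j$, so it works for any Fourier support of controlled measure, with no parallelepiped structure or affine map $T_{j,i}$ needed. The paper's reproducing-kernel argument is the standard proof of Bernstein's inequality and gives more in general: $\|g\|_{L^p}\lesssim m(A_j^i)^{\frac1r-\frac1p}\|g\|_{L^r}$ for all $1\le r\le p\le\infty$ and all $g$ with $\supp\widehat{g}\subset A_j^i$. Your $L^\infty$ step relies on $\|\widehat{g}\|_{L^\infty}\le\|g\|_{L^1}$, or Hausdorff--Young for $r\le 2$, and does not directly extend past $r=2$. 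For the lemma as stated, which is the case $r=1$, that extra generality is not needed.
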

	\begin{proof}
		Since $\{\phi_j^i\}\in\mathscr{P}(A_j^i)$, there is $C_1$ such that $\|\phi_j^i\|_{L^1}\leq C_1$. Now, let $\phi \in S(\mathbb R^n)$ be a Schwartz function satisfying:
		\begin{enumerate}
			\item $\widehat{\phi}(x)=1$ for $x\in (-1/2,1/2)^n$.
			\item $\sup_{1\leq p\leq \infty}\|\phi\|_{L^p}<\infty$.
		\end{enumerate}
		If $T_{j,i}$ is the affine bijection between $A_j^i$ and $(-1/2,1/2)^n$, by Young's inequality we have that 
		$$\|\phi_j^i\|_{L^p}=\|\phi_j^i\ast(\mathcal{F}^{-1}(\widehat{\phi}\circ T_{j,i}))\|_{L^p}\leq C_1\|\mathcal{F}^{-1}(\widehat{\phi}\circ T_{j,i})\|_{L^p}=C_1|\det T_{j,i}|^{\frac{1}{p}-1}\|\phi\|_{L^p},$$
		where $\mathcal{F}$ denotes the Fourier transform. The result follows from the fact that $|\det T_{j,i}|^{-1}= m(A_j^i)\approx a^j$.
	\end{proof}
	Using this lemma we can prove the expected Besov embedding theorem.
	\begin{theo}\label{embed}
		Let $1\leq p_1\leq p_2\leq \infty$, $1\leq q_1\leq q_2\leq \infty$. Then for every choice of real numbers $s_1,s_2$ with $s_2-s_1= \frac{1}{p_2}-\frac{1}{p_1}$, it is true that 
		$$B_{p_1,q_1}^{s_1}(\Omega)\hookrightarrow B_{p_2,q_2}^{s_2}(\Omega).$$
	\end{theo}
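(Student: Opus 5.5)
The plan is to prove the embedding in two independent steps: first increase the Lebesgue index $p$ at the price of lowering the smoothness $s$, then increase the summation index $q$, which costs nothing. Since $\ell^{q_1}\hookrightarrow\ell^{q_2}$ contractively for $q_1\le q_2$, we have $B^{s}_{p,q_1}(\Omega)\hookrightarrow B^{s}_{p,q_2}(\Omega)$ directly from the definition of the norm. So it suffices to show that $B^{s_1}_{p_1,q}(\Omega)\hookrightarrow B^{s_2}_{p_2,q}(\Omega)$ for fixed $q$. It is also enough to prove the norm estimate for $f\in\mathcal{F}^{-1}C_c^\infty(\Omega)$, which is dense by construction, and then extend to the completion.

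The core estimate is a Bernstein/Nikolskii-type inequality on each piece, obtained through Lemma~\ref{estim}. Fix $(j,i)$. By \ref{Axiom4}, only a uniformly bounded number of parallelepipeds $A_k^l$ meet $A_j^i$, and each of them lies in the dilate $T_{j,i}^{-1}(C(-\tfrac12,\tfrac12)^n)$. Let $\psi_j^i=\mathcal{F}^{-1}(\widehat\phi\circ (C^{-1}T_{j,i}))$, where $\widehat\phi=1$ on $(-\tfrac12,\tfrac12)^n$ as in the proof of Lemma~\ref{estim}. Then $\widehat{\psi_j^i}=1$ on $\supp\widehat{\phi_j^i}\subset A_j^i$, so $f\ast\phi_j^i=(f\ast\phi_j^i)\ast\psi_j^i$. (Alternatively one can use $\widetilde\phi_j^i=\sum_{A_k^l\cap A_j^i\ne\emptyset}\phi_k^l$; it suffices to have some kernel that reproduces on $A_j^i$.) Young's inequality with $1+\frac1{p_2}=\frac1{p_1}+\frac1r$ then gives
\[
\|f\ast\phi_j^i\|_{L^{p_2}}\le \|f\ast\phi_j^i\|_{L^{p_1}}\|\psi_j^i\|_{L^r}.
\]
The same scaling computation as in Lemma~\ref{estim} yields $\|\psi_j^i\|_{L^r}\lesssim |\det T_{j,i}|^{\frac1r-1}\|\phi\|_{L^r}\approx a^{j(1-\frac1r)}=a^{j(\frac1{p_1}-\frac1{p_2})}$, using \ref{Axiom2}.

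Multiplying by $a^{js_2}$ and using $s_2+\frac1{p_1}-\frac1{p_2}=s_1$, we obtain
\[
a^{js_2}\|f\ast\phi_j^i\|_{L^{p_2}}\lesssim a^{js_1}\|f\ast\phi_j^i\|_{L^{p_1}}
\]
with a constant uniform in $(j,i)$. Taking the $\ell^q$ norm over $(j,i)$ gives the embedding, and combining with the $q$-monotonicity from the first step finishes the proof.

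The only point needing care is the uniformity of the reproducing kernel's $L^r$ norm. This is immediate from the affine scaling, since $\|\phi\|_{L^r}$ is bounded over $1\le r\le\infty$. Also, for the cases $p_2=\infty$ or $q_2=\infty$ the norms should be read with the usual sup modifications, which the argument handles verbatim.
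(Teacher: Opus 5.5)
Your proof is correct and follows the same route as the paper: a blockwise Nikolskii inequality $\|f\ast\phi_j^i\|_{L^{p_2}}\lesssim a^{j(\frac{1}{p_1}-\frac{1}{p_2})}\|f\ast\phi_j^i\|_{L^{p_1}}$ from Young's inequality with a reproducing kernel, followed by $\ell^{q_1}\hookrightarrow\ell^{q_2}$. The only difference is the kernel. The paper reproduces with $\sum_{(k,l)\in J_{j,i}}\phi_k^l$ and applies Lemma~\ref{estim} to each term, which needs Axioms 1 and 5 to control the number and scales of the neighbours. Your rescaled kernel $\mathcal{F}^{-1}(\widehat\phi\circ C^{-1}T_{j,i})$ needs only Axiom 2, so your appeal to Axiom 4 and the factor $C^{-1}$ are superfluous; note also that bounded overlap comes from Axiom 5, not Axiom 4.
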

	\begin{proof}
		Let $f \in B^{s_1}_{p_1, q_1}(\Omega)$ be a Schwartz function. For a pair $(i,j)$, let $J_{j,i}=\{(k,l):A_k^l\cap A_j^i\neq \emptyset\}$. Let $s \geq 1$ be such that $1+\frac{1}{p_2}=\frac{1}{p_1}+\frac{1}{s}$. Since $\phi_j^i=\phi_j^i\ast(\sum_{(k,l)\in J_{j,i}}\phi_k^l)$,
		$$\|f\ast \phi_j^i\|_{L^{p_2}}\leq \sum_{(k,l)\in J_{j,i}}\|f\ast \phi_j^i\ast\phi_k^l\|_{L^{p_2}}\leq \|f\ast \phi_j^i\|_{L^{p_1}}\sum_{(k,l)\in J_{j,i}}\|\phi_k^l\|_{L^{s}}.$$
		By Lemma \ref{estim}, $\|\phi_k^l\|_{L^{s}}\lesssim a^{k(\frac{1}{p_1}-\frac{1}{p_2})}.$ By Axioms 1 and 5, we get that
		\[\|f\ast \phi_j^i\|_{L^{p_2}}\lesssim a^{j(\frac{1}{p_1}-\frac{1}{p_2})} \|f\ast \phi_j^i\|_{L^{p_1}}.\]
		Since $\ell^{q_1}\hookrightarrow \ell^{q_2}$, the result follows.
	\end{proof}
	
	\subsection{Density of test functions}
	In this short subsection, we establish that smooth functions with compactly supported Fourier transform in $\Omega$ are dense in $\PW^p(\Omega)$, for all $1 \leq p < \infty$.
	
	\begin{lm}\label{compactdense}
		Let $\Omega$ be an open and convex subset of $\mathbb{R}^n$. Then $\mathcal{F}^{-1}C_c^\infty(\Omega)$ is dense in $\PW^p(\Omega)$ for all $1\leq p< \infty$.
	\end{lm}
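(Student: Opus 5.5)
We will combine the two classical ingredients, dilation towards an interior point and mollification, in that order, and we may assume $\Omega$ is nonempty. Translating, we assume $0 \in \Omega$. Fix $f \in \PW^p(\Omega)$ and $\varepsilon>0$.

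\emph{Step 1: dilation.} For $0<t<1$ set $f_t(y) = t^{-n} f(y/t)$. Then $\widehat{f_t}(x) = \hat f(tx)$, so $\supp \widehat{f_t} \subset t^{-1}\overline{\Omega}$. This is the wrong direction: we want the support to shrink into $\Omega$. So instead we take $t>1$, giving $\supp \widehat{f_t} \subset t^{-1}\overline{\Omega}$. Since $\Omega$ is open, convex and contains $0$, we have $t^{-1}\overline{\Omega} \subset \Omega$. Moreover $\|f_t\|_{L^p} = t^{n(1/p-1)}\|f\|_{L^p}$, and $f_t \to f$ in $L^p$ as $t \to 1^+$ by continuity of dilations in $L^p$ for $p<\infty$. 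Hence we may assume $\supp \hat f \subset K := t^{-1}\overline{\Omega}$, where $\operatorname{dist}(K, \partial\Omega) = \delta > 0$ locally. If $\Omega$ is unbounded this distance need not be uniformly positive, so we will handle that in Step 2.

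\emph{Step 2: compact Fourier support.} Let $\chi \in C_c^\infty(\R^n)$ with $\chi = 1$ near $0$, and set $\chi_R(y) = \chi(y/R)$. Convolving on the Fourier side, we pass to $g_R = \mathcal{F}^{-1}(\hat f \ast \psi_R)$, where $\psi_R = \hat{\chi}$ rescaled so that $g_R = f \cdot (\text{smooth bump})$. Concretely, take $\theta \in \mathcal{S}$ with $\hat\theta \in C_c^\infty(B(0,1))$, $\hat\theta \ge 0$, $\int\hat\theta = 1$, so that $\theta(0)=1$. Define $g_\eta(y) = f(y)\,\theta(\eta y)$. Then $\widehat{g_\eta} = \hat f \ast \eta^{-n}\hat\theta(\cdot/\eta)$ is supported in $K + \overline{B(0,\eta)}$. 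By dominated convergence $g_\eta \to f$ in $L^p$, since $|\theta|\le 1$ and $\theta(\eta y) \to 1$ pointwise.

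To obtain compact support, we then multiply $\widehat{g_\eta}$ by a cutoff. It is cleaner to first truncate $\hat f$ in the distributional sense, but $\hat f$ is only a distribution. So we argue as follows. Since $g_\eta \in L^p$ has Fourier support in $K_\eta = K+\overline{B(0,\eta)}$, we set $h = g_\eta \ast \mathcal{F}^{-1}\rho_R$, where $\rho_R(x)=\rho(x/R)$ and $\rho \in C_c^\infty$ equals $1$ on $B(0,1)$. The kernel $\mathcal{F}^{-1}\rho_R$ is a uniformly $L^1$-bounded approximate identity, so $h \to g_\eta$ in $L^p$ as $R\to\infty$. Furthermore, $\supp \hat h \subset K_\eta \cap \overline{B(0,2R)}$, which is compact and contained in $\Omega$ for $\eta$ small enough relative to $R$: the set $K \cap \overline{B(0,2R)}$ is compact in $\Omega$, so it has positive distance to $\partial\Omega$. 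Here we choose $R$ first with $t$ fixed, then $\eta$. The order of limits needs care. We use $\|h - g_\eta\| \le \|(f\theta(\eta\cdot))\ast k_R - f\theta(\eta\cdot)\|_{L^p}$, and bound it uniformly via $\|f\ast k_R - f\|_{L^p} + 2\|f - f\theta(\eta\cdot)\|_{L^p}$ (using $\|k_R\|_{L^1}\le C$). This decouples the two limits.

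\emph{Step 3: smoothness.} Finally, $\hat h$ is a compactly supported distribution in $\Omega$, and $h$ is then smooth and rapidly decaying after one further mollification on the Fourier side. Note that $\hat h \ast \varphi_\epsilon$ corresponds to $h\cdot \check\varphi_\epsilon$, which converges to $h$ in $L^p$ as above, and $\hat h \ast \varphi_\epsilon \in C_c^\infty(\Omega)$ for small $\epsilon$.

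The main obstacle is the unbounded case, where the support stays close to the boundary at infinity. This is why we arrange the order of limits, and the uniform bound in Step 2, before shrinking $\eta$ and $\epsilon$.
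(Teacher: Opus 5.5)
Your argument is correct and is essentially the paper's proof. Translating so that $0\in\Omega$ corresponds to the paper's modulation factor. You then contract the Fourier support toward an interior point by an $L^p$-continuous dilation. Finally, a Fourier-side mollification (multiplying by $\theta(\eta\,\cdot)$) together with the $L^1$-bounded approximate identity $\mathcal{F}^{-1}\rho_R$ yields approximants in $\mathcal{F}^{-1}C_c^\infty(\Omega)$. This fills in details the paper leaves implicit, and Step 3 is redundant because $\widehat{g_\eta}=\hat f*\eta^{-n}\hat\theta(\cdot/\eta)$ is already smooth.

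One correction: your worry about the unbounded case is unfounded. If $B(0,\delta)\subset\Omega$, convexity gives $t^{-1}\overline{\Omega}+B(0,(1-t^{-1})\delta)\subset\Omega$, so the distance to $\mathbb{R}^n\setminus\Omega$ is uniformly positive. Your order-of-limits bookkeeping is therefore harmless but unnecessary; incidentally, the constant $2$ there should be $1+\sup_R\|k_R\|_{L^1}$.
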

	\begin{proof}
		Let us fix $y\in \Omega$ and consider, for $0<r<1$, the contraction $\Omega_r=\overline{(1-r)y+r\Omega}$. For $f\in \PW^p(\Omega)$ we define 
		$$f_r(x)=r^n e^{2\pi i(1-r)\langle y, x \rangle}f(rx).$$
		Then 
		$$\hat{f}_r(x)=\hat{f}\left(\frac{x-(1-r)y}{r}\right),$$
		and thus $\supp\hat{f}_r\subset \Omega_r$.
		Moreover, $f_r\to f$ in $L^p$ as $r\to 1$, as follows by noting that $F \mapsto F_r$, $0 < r < 1$, is a uniformly bounded operation on all of $L^p$, and approximating $f$ with a Schwartz function $F$ in $L^p$. Convolving $f_r$ with an approximate identity and a mollifier provides smooth approximants with compact Fourier support in $\Omega$.
	\end{proof}
	\section{Boundary-weighted Fourier inequalities for general sets}\label{Sec3}
	In this section we will consider the $(p,q,d)$ boundary-weighted Fourier inequality \eqref{hardy},
	\[
	\int_{\Omega}\dfrac{|\hat{f}(x)|^p}{\omega_{\Omega}^d(x)}dx\leq C\|f\|_{L^q}^p,\quad f\in \PW^q(\Omega),
	\]
	for a general open convex set $\Omega\subset\mathbb{R}^n$ not containing any affine lines, where $1\leq p,q< \infty$ and $d\in\mathbb{R}$, obtaining, in particular, a sharp range of necessary conditions. 
	
	To start, note that for $q > 2$, the space $\mathcal{F}\PW^q(\Omega)$ contains distributions which may not be functions. Accordingly, inequality \eqref{hardy} requires that $q \leq 2$.
	\begin{lm}\label{q>2}
		The boundary-weighted Fourier inequality \eqref{hardy} fails whenever $q>2$.
	\end{lm}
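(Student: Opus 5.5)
We argue by contradiction. Suppose \eqref{hardy} holds for some $q>2$, $1\le p<\infty$ and $d\in\R$. The idea is to produce functions in $\PW^q(\Omega)$ whose $L^q$ norm stays bounded while the weighted $L^p$ norm of $\hat f$ over a fixed compact piece of $\Omega$ blows up. This is the phenomenon that $\mathcal F L^q$ contains non-locally-integrable objects when $q>2$.

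First we localize. Fix a closed ball $K=\overline{B(x_0,2r)}\subset\Omega$. Since $\omega_\Omega$ is continuous and positive on $\Omega$ (for instance by the concavity of $\omega_\Omega^{1/n}$, see Lemma~\ref{easy}), we have $\omega_\Omega^{-d}\approx 1$ on $K$. It therefore suffices to find $f_N\in\PW^q(\Omega)$ with $\supp\hat f_N\subset K$, $\|f_N\|_{L^q}\lesssim 1$, and $\int_K|\hat f_N|^p\to\infty$.

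Next we build the examples with a Rudin--Shapiro type construction, or simply with random signs. Take $\psi$ Schwartz with $\hat\psi\in C_c^\infty(B(0,r))$ and $\hat\psi\ge 0$ with $\hat\psi=1$ on $B(0,r/2)$. Set $\varphi(x)=e^{2\pi i\langle x_0,x\rangle}\psi(x)$, so that $\hat\varphi$ is supported near $x_0$. Choose $N$ points $y_k$ spread out in space with separation $R\gg1$, and consider
\[
f_N=\sum_{k=1}^N \epsilon_k\, \varphi(\cdot-y_k), \qquad \hat f_N(\xi)=\hat\varphi(\xi)\sum_k\epsilon_k e^{-2\pi i\langle y_k,\xi\rangle}.
\]
Because the translates are essentially disjointly supported for large separation, $\|f_N\|_{L^q}\approx N^{1/q}$. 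On the Fourier side, averaging over random signs and using Khintchine gives $\mathbb E\int_K|\hat f_N|^p\approx N^{p/2}$. Fix a choice of signs achieving this. Normalizing $g_N=N^{-1/q}f_N$, we get $\|g_N\|_{L^q}\lesssim1$ while $\int_K|\hat g_N|^p\gtrsim N^{p(1/2-1/q)}\to\infty$ because $q>2$. This contradicts \eqref{hardy}. Note that $g_N$ is in fact in $\mathcal F^{-1}C_c^\infty(\Omega)$, so no density issues arise.

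The main obstacle is the $L^q$ estimate $\|\sum\epsilon_k\varphi(\cdot-y_k)\|_q\lesssim N^{1/q}$ uniformly in the signs. We handle it by the Schwartz decay of $\psi$: choosing $R$ large depending on $N$ (for instance $y_k=kRe_1$), the cross terms are negligible. Alternatively, we bound it by $\|\sum_k|\varphi(\cdot-y_k)|\|_q$ and use the decay of $\psi$ along a lattice; the sum over $k$ of $|\psi(x-y_k)|$ is then bounded pointwise, which gives an $\ell^q$-sum bound. The Khintchine lower bound on $K$ is routine: it only uses $|\hat\varphi|\ge1$ on $B(x_0,r/2)$.
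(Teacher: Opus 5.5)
Your proof is correct, and it is essentially the Fourier dual of the paper's argument.

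The paper first reduces to $\|\hat f\|_{L^p}\lesssim\|f\|_{L^q}$ on $\PW^q(B)$ for a ball $\overline B\subset\Omega$. It then packs $N\approx\delta^{-n}$ disjoint frequency bumps of radius $\delta$ into $B$ and attaches random signs. Disjointness on the Fourier side gives $\|\hat f_\epsilon\|_{L^p}^p\approx N\delta^n\approx1$ exactly. Khintchine on the space side, where all the modulated pieces overlap, gives $\mathbb{E}\|f_\epsilon\|_{L^q}^q\approx\delta^{n(q/2-1)}\to0$.

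You instead keep one fixed frequency bump and separate translates in space. Near-disjointness then happens on the space side, giving $\|f_N\|_{L^q}\lesssim N^{1/q}$ uniformly in the signs. Khintchine acts on the Fourier side, where the random trigonometric polynomial has $p$-th moment $\approx N^{p/2}$ at every frequency. Both arguments exploit the same $\ell^2$ versus $\ell^q$ mismatch when $q>2$.

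Your version needs no rescaling or packing count, and the Fourier support stays in a fixed compact set where $\omega_\Omega^{-d}\approx1$. The paper's version is the template it reuses in Lemma~\ref{bestd}, Theorem~\ref{plessqd=dc} and Lemma~\ref{lem:capinequality}. There, disjoint Fourier supports are what let the weighted integral be evaluated piece by piece even though $\omega_\Omega$ varies from piece to piece; randomizing on the Fourier side would not adapt as easily.

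One small point of presentation concerns the $L^q$ bound. Your phrase ``an $\ell^q$-sum bound'' is vague; the clean statement is $\int S^q\le\|S\|_{L^\infty}^{q-1}\|S\|_{L^1}\lesssim N$ for $S=\sum_k|\psi(\cdot-y_k)|$. Unit lattice spacing already makes $S$ bounded, so no large separation $R$ is needed.
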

	\begin{proof}
		By choosing $B$ a ball with $\overline{B}\subset \Omega$, inequality \eqref{hardy} implies the inequality
		$$\|\hat{f}\|_{L^p}\lesssim \|f\|_{L^q}, \quad f\in \PW^q(B).$$ 
		This inequality is of course known not to hold, but let us provide a simple proof of this fact.
		
		Suppose for contradiction that \eqref{hardy} holds when $\Omega$ is the unit ball. Choose $\phi \in S(\mathbb R^n)$ such that $\hat{\phi}(x)=1$ on $B(0,\frac{1}{2})$ and $\supp\hat{\phi}= \overline{B}(0,1)$. Given $\delta>0$ we can find $N=N(\delta) \approx \delta^{-n}$ points $y_j$ such that the closed balls $\overline{B}(y_j,\delta)\subset B(0,1)$ are disjoint. Let $\hat{\phi}_j(x)=\hat{\phi}(\frac{x-y_j}{\delta})$. For $\epsilon=(\epsilon_1,...,\epsilon_N)\in \{-1,1\}^N$ let us define 
		$$	\hat f_\epsilon(x)= \sum_{j=1}^N \epsilon_j\hat{\phi}_j(x).$$ 
		Since $\supp\hat{\phi}_j= \overline{B}(y_j,\delta)$ are mutually disjoint, we have that
		$$\|\hat f_\epsilon\|_{L^p}^p = \sum_{j=1}^N \|\hat{\phi}_j\|_{L^p}^p  \approx N \delta^n \approx 1.$$
		On the other hand, applying Khintchine's inequality \cite{HaagerupKhintchine} pointwise to $f_\epsilon = \sum \epsilon_j \phi_j$ and integrating gives that 
		$$2^{-N}\sum_{\epsilon\in \{-1,1\}^N}\|f_\epsilon\|_{L^q}^q\approx \big\|\left(\sum_{j=1}^N |\phi_j|^2\right)^{\frac{1}{2}}\big\|_{L^q}^q.$$
		By the contradiction hypothesis we therefore have that
		$$ \big\|\left(\sum_{j=1}^N |\phi_j|^2\right)^{\frac{1}{2}}\big\|_{L^q}^q\gtrsim 2^{-N}\sum_{\epsilon\in \{-1,1\}^N}\|\hat f_\epsilon\|_{L^p}^q\approx 1.$$
		But, by Fourier inversion, $\phi_j(x)=e^{2\pi i x\cdot y_j} \delta^n \phi(\delta x),$ and thus
		$$ \big\|\left(\sum_{j=1}^N |\phi_j|^2\right)^{\frac{1}{2}}\big\|_{L^q}^q=\delta^{qn}N^{\frac{q}{2}}\big\|\phi(\delta  \, \cdot )\big\|_{L^q}^q\approx \delta^{nq-n}N^{\frac{q}{2}} \approx \delta^{n(\frac{q}{2} - 1)}.$$ 
		We reach a contradiction taking $\delta \to 0^+$, since $q > 2$.
	\end{proof}
	The setting of $q=2$ is straightforward, described in terms of the set $A(\Omega)$ of real numbers $r$ such that $\omega^{-r}_\Omega\in L^1(\Omega)$. When $\Omega$ is a polytope we have that $A(\Omega)=(-\infty, 1)$, and when $\Omega$ is a set with positive affine surface area we have that $A(\Omega)=(-\infty,\frac{2}{n+1})$, see \cite{MR1194970}. Domains with zero affine surface area (e.g. cylinders) need to be treated on a case-by-case basis.
	
	\begin{lm} \label{lem:qis2}
		Let $q=2$, $p< 2$. Then the boundary-weighted Fourier inequality \eqref{hardy} holds 
		\begin{enumerate}
			\item if and only if $d\in \frac{2-p}{2} A(\Omega)$, when $\Omega$ is bounded;
			\item never, when $\Omega$ is unbounded.
		\end{enumerate}
		For $p=q=2$, the boundary-weighted Fourier inequality holds if and only if $d\leq 0$ for $\Omega$ bounded and if and only if $d=0$ for $\Omega$ unbounded.
	\end{lm}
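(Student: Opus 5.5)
For $q=2$, Plancherel reduces \eqref{hardy} to an inequality between two norms on the Fourier side. Writing $g=\hat f$, the condition $f\in\PW^2(\Omega)$ says exactly that $g$ ranges over all of $L^2(\Omega)$. So \eqref{hardy} becomes the embedding
\[
\int_\Omega |g|^p\,\omega_\Omega^{-d}\,dx\lesssim \|g\|_{L^2(\Omega)}^p,\qquad g\in L^2(\Omega).
\]
The plan is to characterize when the multiplication $g\mapsto g$ maps $L^2(\Omega)$ into the weighted space $L^p(\Omega,\omega_\Omega^{-d}dx)$. Density of test functions (Lemma~\ref{compactdense}) lets us argue with arbitrary $g\in L^2(\Omega)$ throughout.

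For $p<2$, Hölder's inequality with exponents $2/p$ and $r'=2/(2-p)$ gives
\[
\int |g|^p\omega_\Omega^{-d}\le \|g\|_2^p\Bigl(\int_\Omega \omega_\Omega^{-d\cdot\frac{2}{2-p}}\Bigr)^{\frac{2-p}{2}}.
\]
This is finite exactly when $\frac{2d}{2-p}\in A(\Omega)$, that is, when $d\in\frac{2-p}{2}A(\Omega)$. For the converse, the inequality says that the weight $w=\omega_\Omega^{-d}$ defines a bounded functional $h\mapsto\int h\,w$ on $L^{2/p}$, tested on $h=|g|^p\ge 0$, which ranges over all nonnegative functions in $L^{2/p}$. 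By duality this forces $w\in L^{(2/p)'}=L^{2/(2-p)}$, i.e.\ $\omega_\Omega^{-2d/(2-p)}\in L^1(\Omega)$. To make the duality step rigorous I would test on $g=w^{1/(2-p)}\chi_{E}$, with $E$ a truncation set where $w\le N$ and $|x|\le N$, and let $N\to\infty$ by monotone convergence.

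When $\Omega$ is unbounded, I need $\omega_\Omega^{-r}\notin L^1(\Omega)$ for every $r$, so that $A(\Omega)=\emptyset$. The case $r\le 0$ is the main point. Take a recession direction $v$, which exists since an unbounded closed convex set contains a ray. Then $\omega_\Omega(x+tv)$ is nondecreasing in $t\ge 0$, since $M(x)\subset M(x+tv)$ by the identity $2(x+tv)-\Omega\supset 2x-\Omega+tv\supset 2x-\Omega$, using $\Omega+tv\subset\Omega$. So the weight stays bounded below on the translated tube $K+\R_+v$ of a small ball $K\subset\Omega$, and this tube has infinite measure. Hence $\omega^{-r}\notin L^1$ when $r\le0$. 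For $r>0$, integrability fails near the boundary just as in the bounded case, since $\omega_\Omega\to0$ at boundary points and $\omega_\Omega\lesssim\operatorname{dist}(x,\partial\Omega)$ near a boundary point. Alternatively, $\omega_\Omega^{1/n}$ is concave and tends to $0$ at the boundary, so its integral over a neighbourhood of a boundary point is at least that of a linear function, and $\int\operatorname{dist}^{-r}$ diverges when $r\ge1$. The case $0<r<1$ needs more care. It can be handled more simply by scaling: the tube $K+[0,T]v$ has volume $\sim T$, while $\omega_\Omega$ on it is at most $\omega_\Omega$ of the tube's far end. Because the domain is unbounded, I expect the test-function argument on large sets to directly show failure. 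Concretely, take $g=\chi_E$ with $m(E)=T$ on this tube. The inequality becomes $\int_E\omega^{-d}\lesssim T^{p/2}$, while for $d\le0$ the left side is $\gtrsim T$, which contradicts $p<2$. For $d>0$, take $E$ to be a thin boundary layer instead. Making this dichotomy clean is the main obstacle, and I would organize the unbounded case around these two explicit test sets rather than around $A(\Omega)$.

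For $p=2$ the inequality reads $\int|g|^2\omega_\Omega^{-d}\le C\int|g|^2$, which holds iff $\omega_\Omega^{-d}\in L^\infty(\Omega)$. Since $\omega_\Omega\to0$ at $\partial\Omega$, this forces $d\le0$. For bounded $\Omega$, every $d\le0$ works, because $\omega_\Omega\le m(\Omega)$. For unbounded $\Omega$, the monotonicity along $v$ shows that $\omega_\Omega\to\infty$ along rays. For this I would use that $M(x+tv)$ contains the segment from $x$ to $x+2tv$ thickened by a fixed ball, so its measure is $\gtrsim t$. Hence $d<0$ fails, leaving exactly $d=0$.
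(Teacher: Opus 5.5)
Your Plancherel reduction, the characterization $\omega_\Omega^{-d}\in L^{2/(2-p)}(\Omega)$ (with the truncated test function $g=w^{1/(2-p)}\chi_E$ for the converse), and your $p=2$ analysis all match the paper. Your proof that $\omega_\Omega(x+tv)\gtrsim t$ along a recession ray correctly excludes $d<0$ there. For unbounded $\Omega$ and $p<2$, your tube test correctly disposes of $d\le 0$ (i.e.\ $r\le 0$), and the local bound $\omega_\Omega\lesssim\operatorname{dist}(\cdot,\partial\Omega)$ handles $r\ge 1$.

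\textbf{The gap.} The remaining unbounded case $0<r<1$, i.e.\ $0<d<\frac{2-p}{2}$, is flagged in your proposal but not closed, and neither of your devices can close it in general.

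\emph{The boundary.} Near flat parts of $\partial\Omega$, $\omega_\Omega^{-r}$ is locally integrable for every $r<1$; this is why $A(P)=(-\infty,1)$ for polytopes. So the claim that integrability fails near the boundary for all $r>0$ is false. A boundary layer with $g=\chi_E$ gives $\delta^{1-d}\lesssim\delta^{p/2}$, which is a contradiction only when $d>\frac{2-p}{2}$. The concavity remark bounds $\omega_\Omega$ from below, which is the wrong direction.

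\emph{The tube.} It only gives $\int_{K+[0,T]v}\omega_\Omega^{-r}\gtrsim T\,\omega_\Omega(x+Tv)^{-r}$, and $\omega_\Omega$ may grow superlinearly along every recession ray.

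\emph{Example.} Let $\Omega$ be the region above the convex piecewise linear interpolant of $k\mapsto k^2$, $k\in\mathbb{Z}$. This region is sandwiched between $\{y>x^2+\frac14\}$ and $\{y>x^2\}$, and for the latter $\omega=\frac83(y-x^2)^{3/2}$.
\begin{itemize}
\item Its only recession direction is $e_2$, and $\omega_\Omega(x,t)\approx t^{3/2}$ on vertical tubes, so the tube integral converges once $r>\frac23$.
\item It is locally a polygon, so $\omega_\Omega^{-r}$ is integrable near every boundary point for $r<1$.
\end{itemize}
Thus for $\frac23<r<1$ neither test detects anything, yet $\omega_\Omega^{-r}\notin L^1(\Omega)$.

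\textbf{The missing idea.} Test on Macbeath regions, whose volume is tied to $\omega_\Omega$ itself. By Lemma~\ref{easy}, $\omega_\Omega\approx\omega_\Omega(y)$ on $M(y,\frac12)$, which has measure $2^{-n}\omega_\Omega(y)$. Hence
\[
\int_{M(y,\frac12)}\omega_\Omega^{-r}\approx\omega_\Omega(y)^{1-r}.
\]
This is unbounded for $r<1$, since you have already shown $\omega_\Omega(y)\to\infty$ along a ray.

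Equivalently, in your test-set language, $g=\chi_{M(y,\frac12)}$ yields $\omega_\Omega(y)^{1-d}\lesssim\omega_\Omega(y)^{p/2}$. Since $\omega_\Omega$ takes arbitrarily small and large values, this forces $d=\frac{2-p}{2}$, i.e.\ $r=1$, which your boundary argument then excludes.

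The paper does this globally. It picks disjoint admissible parallelepipeds $A_j$ at separated levels $j\in\mathbb{Z}$, which exist because $\omega_\Omega$ takes all values in $(0,\infty)$ on unbounded $\Omega$. This gives $\int_\Omega\omega_\Omega^{-r}\gtrsim\sum_j a^{j(1-r)}=\infty$ for every $r$ at once. Hence $A(\Omega)=\emptyset$, and (2) follows from the characterization in (1).
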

	\begin{proof}
		Let $p<2$. The boundary-weighted Fourier inequality for $q=2$ becomes the embedding $L^2(\Omega)\hookrightarrow L^p(\Omega,\omega_\Omega^{-d})$ and thus this holds if and only if $\omega_\Omega^{-d}\in L^{\frac{2}{2-p}}(\Omega)$. This gives (1).
		
		When $\Omega$ is unbounded, choose, for each sufficiently separated index $j$ a parallelepiped $A_j=A_j^{i_j}$ from an $a$-admissibility family $\{A_{j}^i\}$ for $\Omega$, so that $A_j$ are mutually disjoint and $m(A_j) \approx a^j$. Then
		$$\int_{\Omega}\omega^{r}_\Omega(x) \, dx\gtrsim \sum_{j\in \mathbb{Z}}\int_{\Omega}\omega^{r}_\Omega(x)\chi_{A_j^{i_j}} \, dx \approx \sum_{j\in \mathbb{Z}}a^ja^{jr},$$ which is never finite.
		
		The case $p=2$ becomes $\omega^{-d}_\Omega\in L^{\infty}(\Omega)$ which happens only when $d\leq0$ for $\Omega$ bounded and when $d=0$ if $\Omega$ is unbounded.
	\end{proof}
	Therefore the interesting case is $1\leq q< 2$. In this case, there is an immediate candidate $d_c=d_c(p,q)$ for the best possible exponent.
	\begin{lm}\label{bestd}
		Let 
		$$d_c = d_c(p,q) = 1+\frac{p}{q}-p.$$
		For the boundary-weighted Fourier inequality \eqref{hardy} to hold it is necessary that
		$$d \leq d_c, \textrm{ and } p \leq q',$$
		where $\frac{1}{q}+\frac{1}{q'}=1$. Furthermore, if $\Omega$ is unbounded then it is necessary that $d = d_c$.
	\end{lm}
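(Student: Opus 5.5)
We test the inequality against functions whose Fourier transform is a bump adapted to a single parallelepiped of the admissibility cover (Theorem~\ref{admissibility}), and against randomized sums of many such bumps. The same scaling computation gives all three conclusions.

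\emph{Single bump.} Fix a parallelepiped $A=A_j^i$ with $m(A)\approx a^j$, and let $T$ be the affine bijection of $A$ onto $(-\frac12,\frac12)^n$. Choose a fixed $\psi$ with $\hat\psi\in C_c^\infty((-\frac12,\frac12)^n)$, $\hat\psi\ge0$, and $\hat\psi=1$ on a smaller cube, and set $\hat f=\hat\psi\circ T$. Then $\supp \hat f\subset A\subset\Omega$. By the same change of variables used in Lemma~\ref{estim}, $\|f\|_{L^q}\approx m(A)^{1-1/q}$. By Axiom~1, $\omega_\Omega\approx a^j$ on $A$, so the left side of \eqref{hardy} is $\approx a^{j}a^{-jd}$. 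The inequality therefore forces
\[
a^{j(1-d)}\lesssim a^{jp(1-1/q)}, \quad\text{i.e.}\quad a^{j(d_c-d)}\gtrsim 1 .
\]
Near $\partial\Omega$ the cover contains parallelepipeds with $j\to-\infty$, since $\omega_\Omega\to0$ there by Lemma~\ref{easy} and a continuity argument. Letting $j\to-\infty$ gives $d\le d_c$.

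\emph{Unbounded case.} If $\Omega$ is unbounded, then $\omega_\Omega$ is unbounded, so there are parallelepipeds with $j\to+\infty$. To see this, take a ray $x_0+t v\subset\Omega$; the region $\Omega\cap(2x-\Omega)$ at $x=x_0+tv$ contains a double cone of length $\sim t$ around a small ball, so its volume tends to infinity. Letting $j\to+\infty$ in the same estimate gives $d\ge d_c$, hence $d=d_c$. This existence of large-$j$ pieces is the only geometric step needing care.

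\emph{Condition $p\le q'$.} We use many disjoint bumps at comparable scale, as in Lemma~\ref{q>2}. The simplest choice is to stay inside a fixed ball $\overline B\subset\Omega$, where $\omega_\Omega\approx1$, so the weight plays no role. Lemma~\ref{q>2}'s construction with $N\approx\delta^{-n}$ translated $\delta$-bumps $\hat\phi_j$ gives $\|\hat f_\epsilon\|_{L^p}^p\approx N\delta^n\approx 1$. For the right-hand side we average over signs with Khintchine's inequality. Because the $\phi_j$ share the same modulus, $\|(\sum|\phi_j|^2)^{1/2}\|_q^q\approx \delta^{n(q/2-1)}$, which tends to $0$ when $q<2$, so this choice yields nothing. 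Instead we choose signs so that $f$ concentrates: take all $\epsilon_j=1$ with the centers $y_j$ on a lattice of spacing $2\delta$. Then $f=\phi^\delta\cdot D_N$, where $\phi^\delta=\delta^n\phi(\delta\,\cdot)$ and $D_N$ is a lattice Dirichlet kernel. Since $\|D_N\|_{L^q(\text{period cell})}\approx N^{1-1/q}$ for $q<2$, we get
\[
\|f\|_q\approx \delta^{n}\delta^{-n/q}\,N^{1-1/q}\,\delta^{n/q}=\delta^n N^{1/q'}\approx 1 .
\]
This is not yet decisive, so we rescale. Take $M$ bumps of fixed radius $\delta$ on the lattice, keeping the total support inside $B$. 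Then $\|\hat f\|_p^p\approx M\delta^n$ and $\|f\|_q\approx \delta^{n/q'}M^{1/q'}$. The inequality requires $(M\delta^n)^{1/p}\lesssim (M\delta^n)^{1/q'}$. Since $M\delta^n\le1$ can be made small, this forces $1/p\ge 1/q'$, i.e.\ $p\le q'$.

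The main obstacle is checking the Dirichlet-kernel $L^q$ norm carefully, uniformly in $\delta$; we handle it by periodizing $f$ over the lattice cell of size $\delta^{-1}$.
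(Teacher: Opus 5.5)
Your proof is correct and is essentially the paper's argument. For $d\le d_c$ and for the unbounded case, the paper tests \eqref{hardy} against a single partition function $\phi_j^i$ adapted to one cover parallelepiped; your $\hat\psi\circ T$ plays the same role. It obtains $a^{j(1-d)}\lesssim a^{jp(1-1/q)}$ and lets $j\to\mp\infty$. Your justification that $j$ really runs to $-\infty$, and to $+\infty$ via the double cone when $\Omega$ is unbounded, makes explicit what the paper leaves implicit.

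There is one slip. The displayed consequence should read $a^{j(d_c-d)}\lesssim 1$, not $\gtrsim 1$; as written, $j\to-\infty$ would yield $d\ge d_c$. The conclusions you actually draw are the correct ones.

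For $p\le q'$, the paper observes that on an interior ball $B$ the inequality is just the $(p,q,0)$ inequality for $B$. The first step applied there gives $0\le d_c(p,q)$, i.e.\ $p\le q'$. Your final rescaled estimate with $M=1$ is exactly this single small bump, $\delta^{n/p}\lesssim\delta^{n/q'}$. So the Khintchine and Dirichlet-kernel detour can be removed altogether. It also contains errors you would otherwise have to fix:
\begin{enumerate}
\item for $q<2$, $\delta^{n(q/2-1)}\to\infty$, not $0$;
\item with $N\approx\delta^{-n}$, $\delta^nN^{1/q'}\approx\delta^{n/q}$, not $1$;
\item the lattice bound $\|f\|_q\lesssim\delta^{n/q'}M^{1/q'}$ picks up a factor $\log M$ at $q=1$, which is harmless only because $q'=\infty$ there.
\end{enumerate}
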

	\begin{proof}
		Since $\Omega$ is admissible by Theorem \ref{admissibility}, we may consider a valid Fourier-partition $\mathscr{P}(A_j^i) = \{\phi_j^i\}$ associated with an admissibility cover $\{A_j^i\}$. We may additionally assume that $\widehat{\phi}_j^i\geq C > 0$ on a fixed contraction of $A_j^i$ (independent of $(i,j)$), see \cite[Proposition 2.6]{bampouras2024besovspacesschattenclass}. Thus    $$\int_{\Omega}\dfrac{|\widehat{\phi}_j^i(x)|^p}{\omega_{\Omega}^d(x)}dx\approx a^{j(1-d)}.$$
		By Lemma \ref{estim}, it also holds that 
		$$\|\phi_j^i\|_{L^q}^p\lesssim a^{pj(1-\frac{1}{q})}.$$
		Therefore inequality \eqref{hardy} implies that $d \leq d_c$, and that $d = d_c$ if $\Omega$ is unbounded.
		
		Now for the case $p>q'$ observe that for a ball $B$, as in the proof of Lemma~\ref{q>2}, inequality \eqref{hardy} implies the inequality 	$$\|\hat{f}\|_{L^p}\lesssim \|f\|_{L^q}, \quad f\in \PW^q(B),$$
		for a ball $B$, $\overline{B}\subset \Omega$. We have just seen that this requires $0 \leq d_c,$ that is, $p \leq q'$.
	\end{proof}
	One important example with "critical exponent" $d = d_c$ is furnished by $(p,q,d)=(2,1,1)$,
	\begin{equation*}
		\int_\Omega \dfrac{|\hat{f}(x)|^2}{\omega_{\Omega}(x)}dx\leq C\|f\|_{L^1}^2, \quad f\in\PW^{1}(\Omega).
	\end{equation*}
	This is Helson's inequality, discussed in the introduction. The following lemma gives a sufficient criterion for the validity of Helson's inequality, to be used later for the unit ball. 
	
	\begin{lm}\label{1/omega}
		Suppose that there exists $\psi \in L^\infty$ such that the restriction $\hat{\psi}|_{\Omega}$ (as a distribution) coincides with $\omega_\Omega^{-1}$ in $\Omega$. 
		Then the $(2,1,1)$ boundary-weighted Fourier inequality \eqref{eq:helsonineq} holds.
	\end{lm}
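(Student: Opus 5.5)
The idea is to write the weighted $L^2$ norm of $\hat f$ as a pairing of $f*\tilde f$ against $\psi$, and then use $\psi\in L^\infty$. By Lemma~\ref{compactdense} it suffices to prove \eqref{eq:helsonineq} for $f\in\mathcal{F}^{-1}C_c^\infty(\Omega)$, with a constant independent of $f$. The general case then follows by density and Fatou's lemma, since $\hat f_k\to\hat f$ uniformly when $f_k\to f$ in $L^1$.

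The key identity concerns $g=f*\tilde f$, where $\tilde f(y)=\overline{f(-y)}$. Then $\hat g=|\hat f|^2$, which is smooth and compactly supported in $\Omega$. Consequently
\[
\int_\Omega \frac{|\hat f(x)|^2}{\omega_\Omega(x)}\,dx=\int_\Omega \hat g(x)\,\omega_\Omega^{-1}(x)\,dx=\langle \hat\psi,\hat g\rangle,
\]
where the last step uses the hypothesis that $\hat\psi=\omega_\Omega^{-1}$ as distributions on $\Omega$, together with $\hat g\in C_c^\infty(\Omega)$. By Parseval (the definition of the distributional Fourier transform), $\langle\hat\psi,\hat g\rangle=\langle\psi,\check{\hat g}\rangle$ up to a reflection, and this equals $\int\psi(y)\,g(\pm y)\,dy$. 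Since $g$ is Schwartz, the pairing of the tempered distribution $\psi$ with it is an honest integral.

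The estimate then follows directly:
\[
\int_\Omega \frac{|\hat f|^2}{\omega_\Omega}\le \|\psi\|_{L^\infty}\|g\|_{L^1}\le\|\psi\|_{L^\infty}\|f\|_{L^1}^2,
\]
by Young's inequality. This gives \eqref{eq:helsonineq} with $C=\|\psi\|_{L^\infty}$.

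The main obstacle is the bookkeeping in the pairing step. The conventions for the Fourier transform, the reflection, and the complex conjugation must be kept straight. One must also justify that the distributional identity $\hat\psi|_\Omega=\omega_\Omega^{-1}$ can be tested against $\hat g$. This is legitimate because $\hat g$ has compact support inside the open set $\Omega$, and $\omega_\Omega^{-1}$ is locally bounded there: $\omega_\Omega$ is continuous and positive on $\Omega$, which follows from the concavity of $\omega_\Omega^{1/n}$.
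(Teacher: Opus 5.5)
Your proposal is correct and is essentially the paper's argument: reduce to $f\in\mathcal{F}^{-1}C_c^\infty(\Omega)$ via Lemma~\ref{compactdense}, test $\hat\psi$ against $|\hat f|^2$, and conclude by $L^\infty$--$L^1$ duality plus Young's inequality. The only cosmetic difference is where the convolution sits: the paper writes the pairing as $\langle f*\psi, f\rangle$ and uses $\|f*\psi\|_{L^\infty}\le\|\psi\|_{L^\infty}\|f\|_{L^1}$, while you write it as $\langle \psi, f*\tilde f\rangle$ and use $\|f*\tilde f\|_{L^1}\le\|f\|_{L^1}^2$. Your Fatou argument for the density step and your remark on the local boundedness of $\omega_\Omega^{-1}$ are correct.
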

	\begin{proof}
		By Lemma~\ref{compactdense}, it is enough to consider $f \in \mathcal{F}^{-1}C_c^\infty(\Omega)$. Then
		\begin{eqnarray} 
			\int_\Omega \dfrac{|\hat f(x)|^2}{\omega_{\Omega}(x)}dx&=&\int_\Omega |\hat f(x)|^2\hat{\psi}(x) \, dx=\langle \hat f \hat\psi ,\hat f\rangle = \langle f \ast \psi, f \rangle\nonumber \\   &\leq& \|f \ast \psi\|_{L^{\infty}}\|f\|_{L^{1}}\leq \|\psi\|_{L^{\infty}}\|f\|_{1}^2. \nonumber 
		\end{eqnarray} 
		This is the desired inequality.
	\end{proof}
	
	\begin{remark}
		By the same argument, if $\omega_\Omega^{-d}$ coincides in $\Omega$ with the Fourier transform of a function in the weak-type space $L^{p,\infty}$, $1 < p < \infty$, then the $(2, 2p/(2p-1), d)$-inequality holds.
	\end{remark}
	
	When $1 \leq p < q$, the exponent $d_c(p,q) = 1 + p/q - p$ is not the right one, unless $\Omega$ belongs to a certain subclass of domains with zero affine surface area (which includes polytopes, see Section~\ref{Sec4}).
	
	\begin{theo}\label{plessqd=dc}
		Suppose that $1\leq p<q<2$.
		If the $(p,q,d)$ boundary-weighted Fourier inequality
		\[
		\int_\Omega \frac{|\widehat f(x)|^p}{\omega_\Omega(x)^d}\,dx
		\lesssim \|f\|_{L^q}^p,
		\qquad f\in\PW^q(\Omega),
		\]
		holds, then $\Omega$ is bounded and $d<d_c$.
		
		Moreover, if $n\geq2$ and $\Omega$ has positive affine surface area, then it is necessary that
		\[
		d\leq d_c-\frac{(n-1)(q-p)}{(n+1)q}.
		\]
	\end{theo}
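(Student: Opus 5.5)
The plan is to test \eqref{hardy} against randomized sums of many frequency-disjoint bumps at a single scale. Since $p<q$, this makes the left-hand side grow linearly in the number of bumps, while the right-hand side grows only like its $p/q$-th power. Fix an admissibility cover $\{A_j^i\}$ from Theorem~\ref{admissibility} and a partition $\{\phi_j^i\}\in\mathscr{P}(A_j^i)$ with $\widehat\phi_j^i\gtrsim 1$ on a fixed contraction of $A_j^i$, exactly as in the proof of Lemma~\ref{bestd}. For a fixed level $j$, select $N$ indices $i_1,\dots,i_N$ whose parallelepipeds $A_j^{i_k}$ are pairwise disjoint. Axioms 4 and 5 give bounded overlap, so a fixed fraction of any family at level $j$ can be taken disjoint. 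Put $f_\epsilon=\sum_k \epsilon_k\phi_j^{i_k}$ with $\epsilon\in\{-1,1\}^N$.

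By disjointness and Axiom 1,
\[
\int_\Omega|\widehat f_\epsilon|^p\omega_\Omega^{-d}\approx N a^{j(1-d)}.
\]
On the other side, Khintchine's inequality as in Lemma~\ref{q>2} gives
\[
\mathbb{E}_\epsilon\|f_\epsilon\|_{L^q}^q\approx \Big\|\Big(\sum_k|\phi_j^{i_k}|^2\Big)^{1/2}\Big\|_{L^q}^q\le \sum_k\|\phi_j^{i_k}\|_{L^q}^q\lesssim N a^{jq(1-1/q)}.
\]
The middle inequality uses $\ell^q\subset\ell^2$, valid since $q<2$, and the last step is Lemma~\ref{estim}. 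Choose $\epsilon$ with $\|f_\epsilon\|_q^q$ at most the average, and apply \eqref{hardy}. This yields
\[
N a^{j(1-d)}\lesssim N^{p/q}a^{jp(1-1/q)},\qquad\text{i.e.}\qquad N^{1-p/q}\,a^{j(d_c-d)}\lesssim 1,
\]
uniformly in $j$ and in $N\le N_j$. Here $N_j$ denotes the maximal number of disjoint cover elements at level $j$.

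Each conclusion of Theorem~\ref{plessqd=dc} then comes from a lower bound on $N_j$.
\begin{enumerate}
\item If $\Omega$ is unbounded, I would show $N_j=\infty$ for some $j$. Then letting $N\to\infty$ at fixed $j$ contradicts the displayed bound, since $1-p/q>0$.
\item If $\Omega$ is bounded, then $N_j\to\infty$ as $j\to-\infty$, because the level sets $\Delta_j$ approach $\partial\Omega$ and cells of volume $\approx a^j$ must fill a boundary layer of nonvanishing width. So $d=d_c$ is impossible. Together with Lemma~\ref{bestd}, this gives $d<d_c$.
\item If $\Omega$ has positive affine surface area, I would invoke the Macbeath-region/cap-covering estimates of B\'ar\'any (in the spirit of \cite{MR1194970}). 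These give $N_j\gtrsim a^{-j(n-1)/(n+1)}$ as $j\to-\infty$: the number of disjoint caps of volume $\varepsilon$ is $\approx\varepsilon^{-(n-1)/(n+1)}$. Inserting this, we need
\[
a^{-j\frac{(n-1)}{(n+1)}(1-\frac pq)+j(d_c-d)}\lesssim 1\qquad\text{as } j\to-\infty,
\]
which forces $d\le d_c-\frac{(n-1)(q-p)}{(n+1)q}$.
\end{enumerate}

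I expect the main obstacle to be the geometric input in the unbounded case: producing infinitely many disjoint Macbeath cells at one fixed level. My first attempt would go as follows. Take a recession direction $v$, a base point $x_0$, and the points $x_0+tv$. Concavity of $\omega_\Omega^{1/n}$ keeps $\omega_\Omega\ge\omega_\Omega(x_0)$ along this ray, and $\omega_\Omega\to 0$ at $\partial\Omega$. So for each large $t$, continuity along a segment from $x_0+tv$ to $\partial\Omega$ yields a point $y_t$ with $\omega_\Omega(y_t)=a^j$, for any $j$ with $a^j<\omega_\Omega(x_0)$. It then remains to extract infinitely many $y_t$ with pairwise disjoint $M(y_t,\eta)$. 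Here I would argue by volume: Lemma~\ref{Macbeath} and Lemma~\ref{easy} bound the number of pairwise intersecting cells at comparable level, so any infinite family of such points whose cells are not eventually disjoint would have to accumulate in bounded volume. If this becomes delicate, I would instead count how many level-$j$ cells are needed to cover the unbounded set $\{y_t\}$, using Axiom 3 and Axiom 5.
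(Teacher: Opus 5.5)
Your single-level estimate $N^{1-p/q}a^{j(d_c-d)}\lesssim 1$ is correct. Your step (3) is essentially the paper's argument for the affine-surface-area refinement: the paper gets $\#I_j\approx a^{-j(n-1)/(n+1)}$ from Schmuckenschl\"ager's asymptotics $m(\Delta_j)\approx a^{2j/(n+1)}$ together with the covering and disjointness properties of the cells, which is the same input as your cap count.

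The gap is in the first assertion, which in your approach rests on lower bounds for $N_j$ that you do not establish.
\begin{itemize}
\item For unbounded $\Omega$, you leave open the construction of infinitely many disjoint level-$j$ cells. Your covering fallback can be made to work for $n\geq 2$, but it needs care.
\item For bounded $\Omega$, the claim $N_j\to\infty$ rests only on a heuristic that does not hold up. The layer $\Delta_j$ has volume tending to $0$, and what you actually need is that $m(\Delta_j)/a^j$ is unbounded. This is a quantitative fact about the wet part of a convex body. For $n\geq 2$ it follows from the B\'ar\'any--Larman bound $m(\{\omega_\Omega<\varepsilon\})\gtrsim\varepsilon\log^{n-1}(1/\varepsilon)$, but it is not a soft fact. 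For $\Omega=(0,1)$ or $\Omega=(0,\infty)$ in dimension one, $N_j$ stays bounded for every $j$, yet the conclusion of the theorem still holds there.
\end{itemize}

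The missing idea is that no per-level counting is needed for the first part. After renormalization, the Khintchine test inequality reads
\[
\Bigl(\sum_{(i,j)\in\mathcal E}|b_{i,j}|^p\,a^{j(d_c-d)}\Bigr)^{q/p}\lesssim\sum_{(i,j)\in\mathcal E}|b_{i,j}|^q
\]
for any finite family $\mathcal E$ of cells with pairwise disjoint Fourier supports. Nothing forces these cells to lie on a single level: by Axiom 1, cells whose levels differ by more than $2M$ are automatically disjoint. At $d=d_c$ the weight $a^{j(d_c-d)}\equiv 1$ is scale-invariant. So take one cell on each of $K$ such well-separated levels and set $b\equiv 1$; this gives $K^{q/p}\lesssim K$, which is absurd as $K\to\infty$ because $p<q$.

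Hence $d\neq d_c$, and with Lemma~\ref{bestd} this gives $d<d_c$. Since Lemma~\ref{bestd} also forces $d=d_c$ whenever $\Omega$ is unbounded, boundedness follows immediately. The unbounded case you singled out as the main obstacle is therefore settled by an already-established necessary condition. Counting cells at a single level is needed only for the affine-surface-area refinement, where your argument and the paper's agree.
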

	
	\begin{proof}
		
		Let $\{A_j^i\}$ be a cover with Fourier-partition $\{\phi^i_j\}$. Let $x_j^i$ denote the center of $A_j^i$. By the proof of Theorem~\ref{admissibility}, we may assume that there is a fixed $\rho\in(0,1)$
		such that the contractions
		\[
		A_j^i(\rho)=(1-\rho)x_j^i+\rho A_j^i,
		\qquad i\in I_j,
		\]
		are mutually disjoint. Define
		\[
		\psi_j^i(x)
		=
		\rho^n e^{2\pi i(1-\rho)x\cdot x_j^i}\phi_j^i(\rho x), \qquad \hat{\psi}_j^i(x)
		=
		\hat{\phi}_j^i \left(
		\frac{x-(1-\rho)x_j^i}{\rho}
		\right).
		\]
		Then $\supp\widehat{\psi_j^i}\subset A_j^i(\rho)$ and
		\begin{equation*}
			\|\psi_j^i\|_{L^q}^q\lesssim a^{j(q-1)},
			\qquad
			\int_\Omega
			\frac{|\widehat{\psi_j^i}(x)|^p}{\omega_\Omega(x)^d}\,dx
			\approx a^{j(1-d)},
		\end{equation*}
		by Lemma~\ref{estim}. Let $\mathcal E$ be any finite set of indices $(i,j)$, $i\in I_j$, chosen so that the family $\{A_j^i(\rho)\}_{(i,j)\in \mathcal{E}}$ is pairwise disjoint; for instance, it suffices that the distinct $j-$levels occurring in $\mathcal E$ are separated by more than $2M$, where $M$ is the constant of \ref{Axiom1}. For scalars $c_{i, j}$ and signs
		$\epsilon = (\epsilon_{i,j})_{i,j \in \mathcal{E}} \in\{-1,1\}^{\mathcal{E}}$, let
		\[
		g_\epsilon
		=
		\sum_{(i, j)\in\mathcal E}
		\epsilon_{i, j}c_{i,j}\psi_j^i.
		\]
		Assume that the $(p,q,d)$ boundary-weighted Fourier inequality holds. The left-hand side of the
		$(p,q,d)$-inequality, applied to $f = g_\epsilon$, is independent of the choice of signs. Taking the
		power $q/p$, averaging over the signs, and then applying Khintchine's
		inequality pointwise, we obtain
		\begin{align*}
			\left(
			\sum_{(i, j)\in\mathcal E}
			|c_{i,j}|^p a^{j(1-d)}
			\right)^{q/p}
			&\lesssim
			2^{-|\mathcal E|} \sum_{\epsilon \in \{-1,1\}^{\mathcal{E}}} \|g_\epsilon\|_{L^q}^q 
			\approx
			\int_{\mathbb R^n}
			\left(
			\sum_{(i,j)\in\mathcal E}
			|c_{i,j}\psi_j^i(x)|^2
			\right)^{q/2}dx \\
			&\leq
			\sum_{(i, j)\in\mathcal E}
			|c_{i, j}|^q\|\psi_j^i\|_{L^q}^q \notag
			\lesssim
			\sum_{(i, j)\in\mathcal E}
			|c_{i, j}|^q a^{j(q-1)}.
			\label{eq:packet-obstruction}
		\end{align*}
		Here we used that $q/2<1$ in the second inequality.
		
		Renormalizing the coefficients, this is
		\begin{equation}\label{eq:normalized-obs}
			\left(
			\sum_{(j,i)\in\mathcal E}
			|b_{j,i}|^p a^{j(d_c-d)}
			\right)^{q/p}
			\lesssim
			\sum_{(j,i)\in\mathcal E}|b_{j,i}|^q.
		\end{equation}
		When $d = d_c$, this gives a contradiction as the cardinality of $\mathcal E$ tends to infinity, since $p < q$. By Lemma~\ref{bestd}, we conclude that any valid inequality must have $d<d_c$, and that $\Omega$ must be bounded.
		
		It remains to prove the sharper assertion when
		$\Omega$ has positive affine surface area. 
		Let $I_j$ be the set of all $i$s such that $A_j^i$ is defined. By Schmuckenschl\"{a}ger \cite{MR1194970} (see also \cite[Theorem 2.3]{bampouras2024besovspacesschattenclass}) we then know that
		$a^{\frac{2j}{n+1}}\approx m(\Delta_j(\Omega))$. By \ref{Axiom5}, the sets $A_j^i$ have uniformly bounded
		overlap. Moreover, by the proof of \ref{Axiom3}, a uniform dilation of them covers $\Delta_j$, while \ref{Axiom1} gives
		$$A_j^i\subset\omega_\Omega^{-1}[a^{j-M},a^{j+M}].$$ Thus
		$$	a^{\frac{2j}{n+1}}\approx m(\Delta_j(\Omega))\approx \sum_{i\in I_j}m( A_j^i)\approx \# I_j a^j,$$ and therefore
		$$\# I_j\approx a^{\frac{(1-n)j}{n+1}}.$$
		For a fixed sufficiently negative $j$, apply
		\eqref{eq:normalized-obs} with
		$\mathcal E_j=\{(i,j):i\in I_j\}$ and $b_{i, j}=1$. We obtain
		\[
		(\# I_j)^{1-p/q}a^{j(d_c-d)}\lesssim1.
		\]
		Letting $j\to-\infty$ yields
		\[
		d_c-d
		\geq
		\frac{n-1}{n+1}\left(1-\frac{p}{q}\right)
		=
		\frac{(n-1)(q-p)}{(n+1)q},
		\]
		which is the claimed bound.
	\end{proof}

	\section{Polytopes and polyhedra}\label{Sec4}
	In this section we are going to completely characterize the valid $(p,q,d)$ boundary-weighted Fourier inequalities for polyhedra. To do so, we are first going to generalize the fact that the analytic Hardy spaces $H^p \simeq \PW^p(\R_+^n)$ sit on an interpolation scale for $1 \leq p < \infty$, and thus prove that we may indeed interpolate between $\PW^1(P)$ and $\PW^p(P)$, for polyhedra $P$. We do not know if it is possible to interpolate between $\PW^{p_1}(\Omega)$ and $\PW^{p_2}(\Omega)$ for any other types of domains $\Omega$, not even for $p_1, p_2 > 1$.
	
	\subsection{Complex interpolation for polyhedra} 
	Given an open polyhedron $P\subset\mathbb{R}^n$ which does not contain any lines, let $\Pi_P:L^2(\mathbb{R}^n)\to \PW^2(P)$ be the orthogonal projection, that is, the Fourier multiplier 
	$$\hat{\Pi}_P f=\hat{f}\chi_P, \qquad f \in L^2.$$ We will first observe that $\Pi_P$ extends boundedly to $L^p$, $1<p<\infty$, as a consequence of the boundedness of the Hilbert transform on $L^p(\R)$, or, equivalently, of the Riesz projection $\Pi_{\R_+} \colon L^p(\R) \to \PW^p(\R_+)$. This is in stark contrast to the case with curvature; when $\Omega = B$ is a ball in $\R^n$, $n\geq 2$, it is a famous result \cite{MR296602} that the corresponding ball Fourier multiplier never defines a bounded operator $L^p(\R^n) \to \PW^p(B)$, unless $n =1 $ or $p = 2$.
	\begin{lm}\label{projbound}
		Let $1 < p < \infty$. The projection $\Pi_P:L^p(\mathbb{R}^n)\to \PW^p(P)$ is bounded. Furthermore, if $P$ has $N$ facets, then 
		$$\|\Pi_P f\|_{L^p}\leq C_p^N\|f\|_{L^p},$$
		where $C_p$ is the norm of $\Pi_{\R_+} \colon L^p(\R) \to \PW^p(\R_+)$.
	\end{lm}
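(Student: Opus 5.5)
The plan is to write $P$ as an intersection of half-spaces and factor $\Pi_P$ as a product of half-space projections. Since $P$ is an open polyhedron with $N$ facets, we have
\[
P=\bigcap_{k=1}^N H_k,\qquad H_k=\{x\in\R^n:\langle x,\nu_k\rangle>c_k\},
\]
with $\nu_k$ a unit vector and $c_k\in\R$. Up to Lebesgue null sets (the boundary hyperplanes), $\chi_P=\prod_{k=1}^N\chi_{H_k}$. Consequently, on $L^2\cap L^p$,
\[
\Pi_P=\Pi_{H_1}\Pi_{H_2}\cdots\Pi_{H_N},
\]
where $\Pi_{H_k}$ is the Fourier multiplier with symbol $\chi_{H_k}$. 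The bound $\|\Pi_P f\|_{L^p}\le C_p^N\|f\|_{L^p}$ then follows from the single estimate $\|\Pi_{H}\|_{L^p\to L^p}\le C_p$ for every half-space $H$. Density of $L^2\cap L^p$ gives the bounded extension. The image lies in $\PW^p(P)$: the Fourier support of $\Pi_Pf$ lies in $\overline P$ for $f\in L^2\cap L^p$, and this survives $L^p$-limits distributionally.

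The estimate for a single half-space is proved in three steps.

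\emph{Translation.} Replacing $c_k$ by $0$ corresponds to conjugating with the modulation $f\mapsto e^{2\pi i\langle y,\cdot\rangle}f$, where $\langle y,\nu_k\rangle=c_k$. This is an isometry of $L^p$, so we may assume $H=\{\langle x,\nu\rangle>0\}$.

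\emph{Rotation.} Composing with a rotation $R$ sending $\nu$ to $e_1$ is also an $L^p$-isometry, and the Fourier transform commutes with rotations. So we may assume $H=\{x_1>0\}$.

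\emph{Fubini.} The multiplier $\chi_{\{x_1>0\}}(x)$ depends only on the first frequency variable. Hence the operator acts as the one-dimensional Riesz projection $\Pi_{\R_+}$ in the variable $t_1$, for almost every fixed $(t_2,\dots,t_n)$. For Schwartz $f$ this can be checked by taking the partial Fourier transform in the first variable only. Fubini then gives
\[
\|\Pi_{H}f\|_{L^p(\R^n)}^p=\int_{\R^{n-1}}\|\Pi_{\R_+}f(\cdot,t')\|_{L^p(\R)}^p\,dt'\le C_p^p\|f\|_{L^p}^p .
\]
Here $C_p<\infty$ by M. Riesz's theorem, since $\Pi_{\R_+}=\tfrac12(I+iH)$ with $H$ the Hilbert transform, up to sign convention.

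The main technical point is the Fubini step: justifying that the $n$-dimensional multiplier equals the fiberwise one-dimensional projection. I would verify this first on Schwartz functions using the partial Fourier transform, then extend by density. The factorization $\chi_P=\prod\chi_{H_k}$ also requires that $P$ be exactly the finite intersection of the open half-spaces determined by its facets. This holds for any open polyhedron, bounded or unbounded, and does not use that $P$ is free of lines.
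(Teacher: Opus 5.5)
Your proposal is correct and follows the paper's proof. Both write $P$ as the intersection of the $N$ half-spaces determined by its facets, factor $\Pi_P=\Pi_{H_1}\cdots\Pi_{H_N}$, and use that each half-space projection has norm at most $C_p$; the paper simply calls this last point evident, while you justify it via modulation, rotation and a Fubini reduction to the one-dimensional Riesz projection, and you also fill in the density and Fourier-support details the paper leaves implicit.
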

	\begin{proof}
		For a half space $H=\{(x_1,...,x_n):x_1>0\}$, it is evident that
		$$\|\Pi_H\|_{L^p(\mathbb{R}^n)\to \PW^p(H)} = C_p.$$ Let $P$ be a polyhedron with $N$ facets. Then $P$ is a finite intersection of half-spaces $H_j$, $j=1,...,N$, and thus $\Pi_P=\Pi_{H_1} \Pi_{H_2} \cdots \Pi_{H_N}$. 
	\end{proof} 
	
	This existence of a bounded projection immediately implies that the spaces $\PW^p(P)$ sit on a complex interpolation scale for $1 < p < \infty$. However, the Fourier multiplier given by $\chi_P$ is never bounded on $L^1$, so we must instead exploit the classical interpolation theorem for product Hardy spaces. 
	
	We begin with a geometric lemma. 
	
	\begin{lm}\label{polyhedronpartition}
		Up to a set of measure zero, $P$ can be written as a finite disjoint union of sets affinely equivalent to $\Sigma_k\times\mathbb R_+^\ell$, where $k+\ell=n$, and $\Sigma_k$ is the simplex
		\[
		\Sigma_k=
		\{x\in\mathbb R_+^k:\sum_{r=1}^k x_r<1 \}.
		\]
	\end{lm}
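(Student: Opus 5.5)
Write the line-free polyhedron as $P=\{x:\langle a_m,x\rangle<b_m,\ m=1,\dots,N\}$. Its recession cone $C=\{v:\langle a_m,v\rangle\le 0\ \forall m\}$ is pointed, because $P$ contains no lines. By the Minkowski--Weyl theorem,
\[
\overline P=Q+C,
\]
where $Q=\conv V$ is the convex hull of the finite vertex set $V$ of $\overline P$ (vertices exist because $P$ is line-free). The cone $C$ is generated by finitely many extreme rays $r_1,\dots,r_s$. The plan is to triangulate compatibly, so that each cell of the triangulation is a product-like region of the form $\Sigma_k\times\R_+^\ell$.

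\textbf{Step 1: reduce to a pointed cone in one dimension higher.} Homogenize: let $\widetilde C\subset\R^{n+1}$ be the closed cone generated by the vectors $(v,1)$, $v\in V$, and $(r_i,0)$, $i\le s$. This is a pointed polyhedral cone with $\overline P=\{x:(x,1)\in\widetilde C\}$.

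\textbf{Step 2: triangulate $\widetilde C$.} Triangulate $\widetilde C$ into finitely many simplicial cones, using only the given generators as rays. For instance, take a pulling or placing triangulation of the point configuration formed by the generators on a transversal hyperplane. Each simplicial cone $S$ is spanned by $n+1$ linearly independent generators, say $(v_0,1),\dots,(v_k,1)$ and $(r_{i_1},0),\dots,(r_{i_\ell},0)$ with $k+\ell=n$. Any full-dimensional simplicial cone in $\widetilde C$ must contain at least one generator with last coordinate $1$: otherwise it would lie in $\{x_{n+1}=0\}$, which has measure zero in $\R^{n+1}$. Discard lower-dimensional cones; two distinct full cones overlap only in a face.

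\textbf{Step 3: slice at height $1$.} Intersecting a cell $S$ with $\{x_{n+1}=1\}$ gives
\[
\{\textstyle\sum_j\lambda_j v_j+\sum_m\mu_m r_{i_m}:\ \lambda_j\ge0,\ \sum_j\lambda_j=1,\ \mu_m\ge0\}.
\]
Since the $n+1$ generators are linearly independent, the vectors $v_1-v_0,\dots,v_k-v_0,r_{i_1},\dots,r_{i_\ell}$ form a basis of $\R^n$. The affine map sending $0\mapsto v_0$, $e_j\mapsto v_j-v_0$ for $j\le k$, and $e_{k+m}\mapsto r_{i_m}$ carries the closure of $\Sigma_k\times\R_+^\ell$ onto the slice, bijectively. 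The slices cover $\overline P$ because the cones cover $\widetilde C$. Distinct slices meet only in lower-dimensional faces, hence in sets of measure zero. Finally $\overline P\setminus P$ has measure zero, as does the boundary of each slice. Together these give the claim.

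The main obstacle I expect is Step 2. I need to make sure a triangulation of an \emph{unbounded} configuration exists, and that it uses only the original generators, so that no spurious rays appear. Working with the cone $\widetilde C$ handles this. Since $\widetilde C$ is pointed, there is a linear functional $h$ positive on $\widetilde C\setminus\{0\}$. Intersecting with $\{h=1\}$ yields a bounded polytope whose vertices are normalizations of the generators, and any triangulation of that polytope using its points pulls back to a conical triangulation. The case $k=0$ is excluded automatically by the argument in Step 2, so the cells $\Sigma_0\times\R_+^n$ only appear if we choose to allow them; if the paper permits $k=0$, with $\Sigma_0$ a point, then the statement is covered either way.
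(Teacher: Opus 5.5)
Your proof is correct and follows essentially the same route as the paper: you homogenize $\overline P$ to a pointed cone in $\R^{n+1}$, triangulate it into full-dimensional simplicial cones using only the given generators, and slice at height one to obtain affine copies of $\Sigma_k\times\R_+^\ell$. One small correction to your closing remark. Step 2 excludes only cells with \emph{no} generator at height one. Cells with exactly one such generator ($k=0$, i.e.\ affine orthants) do occur, for instance for $P=\R^n_+$ itself, and the paper explicitly allows them, treating $k=0$ as the orthant case in the proof of Theorem~\ref{interpolation}.
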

	
	\begin{proof}
		By the Minkowski--Weyl theorem \cite[Theorem 14.3]{MR2335496}, we can write $P$ as the sum of a convex hull of finitely many points and the conic hull of a finite number of rays,
		\[
		\overline P
		=
		\conv\{u_1,\ldots,u_N\}
		+
		\cone\{v_1,\ldots,v_M\}.
		\]
		Introduce one additional coordinate and consider the polyhedral cone
		\[
		K
		=
		\cone\bigl(
		\{(u_r,1):1\leq r\leq N\}
		\cup
		\{(v_s,0):1\leq s\leq M\}
		\bigr)
		\subset\mathbb R^{n+1},
		\]
		so that
		\[
		K\cap\bigl(\mathbb R^n\times\{1\}\bigr)
		=
		\overline P\times\{1\}.
		\]
		By the same hypothesis for $P$, the cone $K$ cannot contain any lines (it is \textit{pointed}). We can therefore triangulate $K$ into a finite decomposition
		\[
		K=\bigcup_\alpha K_\alpha
		\]
		of full-dimensional simplicial cones whose interiors are pairwise disjoint, see for example \cite[Proposition~2.2.4]{DeLoeraRambauSantos2010}.
		
		Fix one such cone $K_\alpha$. Renormalizing the generators with positive $(n+1)$:th coordinate, and reordering, we may write
		\[
		K_\alpha
		=
		\cone\bigl\{
		(y_0,1),\ldots,(y_k,1),
		(w_1,0),\ldots,(w_\ell,0)
		\bigr\},
		\qquad k+\ell=n.
		\]
		At height one, we must then have that
		\[
		K_\alpha' := K_\alpha\cap
		\bigl(\mathbb R^n\times\{1\}\bigr)
		=
		\left(
		\conv\{y_0,\ldots,y_k\}
		+
		\cone\{w_1,\ldots,w_\ell\}
		\right)\times\{1\},
		\]
		that is, the interior of $K_\alpha'$ is affinely equivalent to $\Sigma_k\times\mathbb R_+^\ell$.
		The cones $K_\alpha$ cover $K$ and meet only along their
		faces. The sets $K_\alpha'$ therefore cover $P$ and overlap only along their boundaries, which have measure zero.
	\end{proof}
	
	We now prove the theorem of this subsection.
	
	\begin{theo}\label{interpolation}
		Let $1<p<\infty$ and $0<\theta<1$. Then we have the complex interpolation identity
		$$(\PW^1(P),\PW^p(P))_\theta=\PW^q(P), \qquad \frac{1}{q}=(1-\theta)+\frac{\theta}{p}.$$
	\end{theo}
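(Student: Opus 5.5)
The plan is to reduce to the product Hardy space $H^p(\R_+^n)\simeq \PW^p(\R_+^n)$ and, more generally, to products $\PW^p(\Sigma_k\times\R_+^\ell)$, using the decomposition of Lemma~\ref{polyhedronpartition}, and to realize $\PW^p(P)$ as a retract of a finite direct sum of such spaces. Concretely, write $P=\bigcup_\alpha P_\alpha$ (up to null sets) with each $P_\alpha = T_\alpha(\Sigma_{k_\alpha}\times\R_+^{\ell_\alpha})$ for an affine bijection $T_\alpha$. Consider the operators
\[
R\colon \PW^p(P)\to \bigoplus_\alpha \PW^p(P_\alpha),\quad Rf=(\Pi_{P_\alpha}f)_\alpha, \qquad S\colon \bigoplus_\alpha \PW^p(P_\alpha)\to \PW^p(P),\quad S(f_\alpha)=\sum_\alpha f_\alpha .
\]
Then $SR=\mathrm{id}$. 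The map $S$ is trivially bounded for all $1\le p<\infty$. For $p>1$, $R$ is bounded by Lemma~\ref{projbound}, since each $P_\alpha$ is a polyhedron. The difficulty is $p=1$: there $\Pi_{P_\alpha}$ is not bounded on $L^1$, so $R$ must be replaced by something else.

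To handle $p=1$, I would use the structure of each piece. The facets of $P_\alpha$ that are not facets of $P$ are internal cuts of a simplicial decomposition. One option is to replace the sharp cutoffs $\chi_{P_\alpha}$ by a smooth partition of unity adapted to the admissibility cover of Section~\ref{Sec2}, using Theorem~\ref{admissibility} and the Fourier partition $\{\phi^i_j\}$ with uniformly bounded $L^1$ norms. Near the boundary of $P$, the multipliers must localize to the pieces while remaining $L^1$-bounded. This is plausible because near a face of $P$ the relevant parallelepipeds are thin in the normal direction but of comparable shape to their neighbours (Axiom~4), so a smooth cone/sector partition in the tangential variables should give uniformly bounded $L^1$ multipliers.

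I would try the following first. Choose the triangulation so that every simplicial cone $K_\alpha$ is a cone over a vertex of $P$ or over a face. Then build smooth homogeneous-type multipliers $m_\alpha$, with $\sum_\alpha m_\alpha=1$ on $P$ and $m_\alpha$ supported in a slight enlargement $\tilde P_\alpha\subset P$, where $\tilde P_\alpha$ is still affinely a product of a bounded polytope and a cone. These multipliers should be bounded on $\PW^1(P)$. After that, interpolation for $\PW^p$ of a cone $T(\Sigma_k\times\R^\ell_+)$-type set reduces, via a further smooth cutoff in the bounded simplex directions (which is harmless, since functions with compact Fourier support in those directions behave like $L^p$ there), to interpolation of the product Hardy spaces $H^p(\R^\ell_+)$ with vector values. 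That interpolation is the classical result (Gundy–Stein, or Chang–Fefferman for product $H^1$ via atomic decomposition / complex method). A cleaner route may be to apply an affine map sending a vertex cone of $P$ to $\R^n_+$, so that near each vertex $P$ locally coincides with an orthant after transformation when the vertex is simple. For non-simple vertices, the cone decomposition of Lemma~\ref{polyhedronpartition} is needed.

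With a retract in hand, meaning bounded $R_p$ and $S_p$ on every $p\in[1,\infty)$ commuting with the interpolation functor and satisfying $S R=\mathrm{id}$, the identity $(\PW^1(P),\PW^p(P))_\theta=\PW^q(P)$ follows from the retract theorem together with the known interpolation identity for the model spaces (product Hardy spaces, tensored with $L^p$ in the bounded directions). The main obstacle, and where I expect the real work, is the $L^1$-boundedness of the localizing multipliers at $p=1$, that is, constructing a partition of $\chi_P$ into smooth pieces that are $L^1$ Fourier multipliers on $\PW^1(P)$.
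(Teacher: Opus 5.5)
Your proposal has a genuine gap, and it is the step you yourself flag as the main obstacle. The argument needs a retraction $R$ that is bounded at the endpoint $1$. Concretely, you need smooth multipliers $m_\alpha$ with $\sum_\alpha m_\alpha=1$ on $P$, each supported in a product-type piece $\tilde P_\alpha\cong\Sigma_k\times\R^\ell_+$, that are bounded on $\PW^1(P)$. You only assert that they ``should be bounded''.

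In general they cannot be $L^1$ multipliers. Every vertex of $\Sigma_k\times\R^\ell_+$ is simple, so at a non-simple vertex $v$ of $P$ no $\tilde P_\alpha$ contains $P\cap B(v,r)$ for any $r>0$. Hence each $m_\alpha$ vanishes at points of $P$ arbitrarily close to $v$, while $\sum_\alpha m_\alpha=1$ on $P$. So some $m_\alpha$ is discontinuous at $v$, and it is not the Fourier transform of a finite measure. Proving that such homogeneous-type cutoffs are nevertheless bounded on $\PW^1(P)$ would need real input from multiplier theory on product Hardy spaces, which you neither state nor verify.

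Your reduction of the model pieces is also flawed. Compact Fourier support in the $\Sigma_k$-variables does not make functions ``behave like $L^p$''. Fourier support in $\Sigma_k$ imposes a one-sided, Riesz-projection-type constraint at every facet of $\Sigma_k$, and no smooth cutoff removes it. Already for $\PW^r((0,1))$ on $\R$, this is an $H^1$ interpolation problem at both endpoints of the interval.

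The missing idea is that you never need an operator that is bounded on $\PW^1(P)$ and maps into the pieces. The inclusion $(\PW^1(P),\PW^p(P))_\theta\hookrightarrow\PW^q(P)$ is trivial. For the converse, in your notation, you need $S$ only at the endpoints $1$ and $p$, where it is trivially bounded. You need $R$ only at the intermediate exponent $q\in(1,\infty)$, where Lemma~\ref{projbound} already provides it. For $f\in\PW^q(P)$, write $f=\sum_m\Pi_{T_m}f$ with $\|\Pi_{T_m}f\|_{L^q}\lesssim\|f\|_{L^q}$. The inclusions $\PW^r(T_m)\hookrightarrow\PW^r(P)$, $r\in\{1,p\}$, have norm one, so $\|\Pi_{T_m}f\|_{(\PW^1(P),\PW^p(P))_\theta}\leq\|\Pi_{T_m}f\|_{(\PW^1(T_m),\PW^p(T_m))_\theta}$.

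This reduces everything to proving $\PW^q(T)\hookrightarrow(\PW^1(T),\PW^p(T))_\theta$ for $T=\Sigma_k\times\R^\ell_+$. Here your vertex-cone remark is the right tool, applied to the simplex, all of whose vertices are simple:
\begin{enumerate}
\item Dropping the inequality of one facet $\nu$ of $\Sigma_k$ gives a cone $C_\nu$ such that $O_\nu=C_\nu\times\R^\ell_+$ is an affine orthant.
\item For $O_\nu$, product Hardy space interpolation plus the retract given by the product Riesz projection yields $(\PW^1(O_\nu),\PW^p(O_\nu))_\theta=\PW^q(O_\nu)$.
\item Take a partition of unity $\{\alpha_\nu\}$ near $\overline{\Sigma_k}$ with $\alpha_\nu$ vanishing near facet $\nu$. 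Then $f=\sum_\nu M_{\alpha_\nu}f$, where $M_{\alpha_\nu}f\in\PW^q(O_\nu)$ with norm $\lesssim\|f\|_{L^q}$.
\item Return to $T$ via $M_{\beta_\nu}$, where $\beta_\nu=1$ near $\supp\alpha_\nu$ and $\beta_\nu$ is supported in a small neighbourhood of $\overline{\Sigma_k}$ away from facet $\nu$. This maps $\PW^r(O_\nu)$ into $\PW^r(T)$ also for $r=1$, since it is convolution in the first $k$ variables with the integrable kernel $\mathcal{F}^{-1}\beta_\nu$.
\end{enumerate}
So $L^1$-boundedness is only ever required of compactly supported smooth multipliers, and the retract theorem only for orthants.
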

	
	\begin{proof}
		The non-immediate inclusion to be proved is that $$\PW^q(P) \hookrightarrow (\PW^1(P),\PW^p(P))_\theta.$$
		The converse inclusion is always true since $$(\PW^1(P),\PW^p(P))_\theta\hookrightarrow (L^1(\mathbb{R}^n),L^p(\mathbb{R}^n))_\theta=L^q(\mathbb{R}^n),$$ and $\PW^1(P)+\PW^p(P)$ has Fourier support in $\overline{P}$.
		
		We next record the orthant case, equivalent to interpolation for product Hardy spaces. 
		Let $H_{\mathrm{prod}}^r$, $1 \leq r < \infty$, denote the real product
		Hardy space.  The classical product Hardy space interpolation theorem, see for example
		\cite[Theorem~4 and Application~(2)]{CwikelMilmanSagher1986}, gives
		\[
		\bigl(H^1_{\mathrm{prod}},H^p_{\mathrm{prod}}\bigr)_\theta
		=H^q_{\mathrm{prod}}=L^q.
		\]
		The product Riesz projection $\Pi_{\R_+^n}$
		is bounded both on $H^1_{\mathrm{prod}}$ and on
		$L^p$, mapping $H^1_{\mathrm{prod}}$ onto the analytic Hardy space $\PW^1(\R_+^n)$ and $L^p$ onto $\PW^p(\R_+^n)$. Applying the
		retract theorem, and then an affine change of variables, proves that
		\begin{equation}\label{eq:orthant-interpolation}
			\bigl(\PW^1(O),\PW^p(O)\bigr)_\theta=\PW^q(O)
		\end{equation}
		for every orthant $O$, affinely equivalent to $\R_+^n$.
		
		We now consider the model polyhedron of Lemma~\ref{polyhedronpartition}
		\[
		T=\Sigma_k\times\mathbb R_+^\ell,
		\qquad k+\ell=n.
		\]
		We just dealt with the case $k=0$, so suppose that $k\geq1$. For each face of $\Sigma_k$, let $C_\nu$, $1 \leq \nu \leq k+1$, be the region obtained by discarding the inequality defining that face, and let $O_\nu=C_\nu\times\mathbb R_+^\ell$ denote the resulting affine orthant.  Choose also
		$\alpha_\nu,\beta_\nu\in C_c^\infty(\mathbb R^k)$ so that $\{\alpha_\nu\}$ is a partition of unity in a neighborhood of $\overline{\Sigma_k}$ such that each $\alpha_\nu$ is supported away from the corresponding face, and then choose $\beta_\nu$ equal to one in a neighborhood of $\supp\alpha_\nu$, also supported away from the same face.
		
		For $a\in C_c^\infty(\mathbb R^k)$, let $M_a$ be the partial Fourier
		multiplier
		\[
		\widehat{M_a f}(\xi',\xi'')
		=
		a(\xi')\widehat f(\xi',\xi''),
		\qquad
		(\xi',\xi'')\in\mathbb R^k\times\mathbb R^\ell.
		\]
		Then $M_a \colon L^r \to L^r$ is bounded for every $r$,
		\begin{equation*}
			\|M_a\|_{L^r\to L^r}
			\leq\|\mathcal{F}^{-1} a\|_{L^1(\mathbb R^k)},
			\qquad 1\leq r\leq\infty.
		\end{equation*}
		For $f\in\PW^q(T)$, let $f_\nu=M_{\alpha_\nu}f$, giving us the decomposition  
		$$f=\sum_\nu f_\nu, \qquad f_\nu\in\PW^q(O_\nu).$$
		Furthermore, $M_{\beta_\nu}f_\nu=f_\nu$, and 
		$M_{\beta_\nu}$ maps $\PW^r(O_\nu)$ boundedly into $\PW^r(T)$, in particular for $r = 1$ and $r = p$.  The interpolation identity \eqref{eq:orthant-interpolation} and the functorial property of interpolation thus gives
		\[
		\|f_\nu\|_{(\PW^1(T),\PW^p(T))_\theta}
		\lesssim
		\|f_\nu\|_{(\PW^1(O_\nu),\PW^p(O_\nu))_\theta}
		\approx
		\|f_\nu\|_{\PW^q(O_\nu)}
		\lesssim
		\|f\|_{L^q}.
		\]
		Summing over the finitely many $\nu$ proves that
		\begin{equation} \label{eq:modelint}
			\PW^q(T)
			\hookrightarrow
			\bigl(\PW^1(T),\PW^p(T)\bigr)_\theta.
		\end{equation}
		The same conclusion holds for every affine image of $T$, by a change of variable.
		
		To conclude, use Lemma~\ref{polyhedronpartition} to write
		$P=\bigcup_{m \leq N} T_m$ up to null sets, where every $T_m$ is an
		affine image of a model polyhedron. Lemma
		\ref{projbound} shows that the Fourier projections $\Pi_{T_m}$ are bounded
		on $L^q$, and that $ \sum_{m\leq N}\Pi_{T_m}=\Pi_P.$ By the triangle inequality and \eqref{eq:modelint}, we therefore find that
		\begin{align*}
			\|f\|_{(\PW^1(P),\PW^p(P))_\theta} 
			&\leq
			\sum_{m=1}^N
			\| \Pi_{T_m}f\|_{(\PW^1(T_m),\PW^p(T_m))_\theta} \\
			&\lesssim
			\sum_{m=1}^N\|\Pi_{T_m} f\|_{\PW^q(T_m)}
			\lesssim
			\|f\|_{L^q},
		\end{align*}
		as desired.
	\end{proof}

	\subsection{Boundary-weighted Fourier inequalities for polyhedra}
	Before we prove our main theorem for this section, we will first need to extend the Hardy inequality of \cite[Theorem 1.2]{bampouras2026nehari} to all polyhedra.
	\begin{lm}\label{polyhbamp}
		Suppose that $P$ has $N$ facets. There exists $C=C(n,N)$ such that the inequality
		$$\int_P \dfrac{|\hat{f}(x)|}{\omega_P(x)}dx\leq C\|f\|_{L^1},\quad f\in \PW^1(P),$$ holds.
	\end{lm}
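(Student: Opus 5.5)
We reduce to the model pieces of Lemma~\ref{polyhedronpartition}, and then to orthants, where the inequality is the known Hardy inequality for product Hardy spaces. By Lemma~\ref{compactdense} it suffices to take $f \in \mathcal{F}^{-1}C_c^\infty(P)$. The essential difficulty is that $\Pi_P$, and the projections $\Pi_{T_m}$ onto the pieces of the decomposition, are not bounded on $L^1$. So we cannot simply split $f = \sum_m \Pi_{T_m} f$. Instead we localize with smooth cutoffs adapted to the vertices, in the spirit of the proof of Theorem~\ref{interpolation}.

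\emph{Step 1: the weight near vertices and at infinity.} Write $\overline{P} = \conv\{u_r\} + \cone\{v_s\}$. We compare $\omega_P$ locally with the weight of a simpler set. For each vertex $u$ of $P$ let $K_u = u + \cone(P-u)$ be the tangent cone, a pointed polyhedral cone. By triangulating as in Lemma~\ref{polyhedronpartition}, $K_u$ is a finite union of affine orthants $O$. On each such orthant,
\[
\omega_O(x) \approx \prod_i \lambda_i(x),
\]
where $\lambda_i$ are the affine coordinates of $O$. The key geometric claim is that, for $x \in P$ in a neighbourhood $U_u$ of $u$,
\[
\omega_P(x) \approx \omega_{K_u}(x) \gtrsim \omega_{O}(x) \quad \text{for } x \in O \subset K_u .
\]
The inequality $\omega_P \le \omega_{K_u}$ is trivial. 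For the reverse direction, note that $M_P(x)$ is essentially $M_{K_u}(x)$ truncated at unit scale. Since $P$ is a polyhedron, each facet direction at $u$ is seen at scale $\lesssim 1$, so the Macbeath region shrinks only by a bounded factor. Unbounded directions of $P$ are handled the same way, using the recession cone together with faces of $P$ at infinity.

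To make this uniform, the neighbourhoods $U_u$ must be chosen compatibly with the triangulation. The boundedness of the constants depends only on $n$ and $N$. This geometric comparison is the step I expect to be the main obstacle. The difficulty lies mainly in covering $P$ by finitely many regions on each of which $\omega_P$ is comparable to an orthant weight in some face-adapted coordinates.

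\emph{Step 2: a smooth partition adapted to orthants.} Choose finitely many affine orthants $O_\nu \supset P$. Each $O_\nu$ is obtained by keeping only $n$ of the facet inequalities of $P$ that are linearly independent and active near some vertex (or at infinity). Choose Fourier multipliers $M_{\alpha_\nu}$ with symbols $\alpha_\nu$ such that:
\begin{itemize}
\item $\sum_\nu \alpha_\nu = 1$ on $P$;
\item $\alpha_\nu$ is supported where $\omega_P \approx \omega_{O_\nu}$;
\item $\|\mathcal{F}^{-1}\alpha_\nu\|_{L^1} \lesssim 1$.
\end{itemize}
For a bounded $P$ these are smooth compactly supported bumps, as in the proof of Theorem~\ref{interpolation}. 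For an unbounded $P$ we use symbols that depend only on the bounded coordinates (partial multipliers $M_a$), exactly as was done for $\Sigma_k\times\R^\ell_+$. By Lemma~\ref{polyhedronpartition}, locally near each vertex it suffices to treat the model $T=\Sigma_k\times\R_+^\ell$. For this model the orthants $O_\nu = C_\nu\times\R_+^\ell$ from the proof of Theorem~\ref{interpolation} work directly. Away from the discarded face, the discarded face coordinate is bounded below by a constant, so $\omega_T \approx \omega_{O_\nu}$ on $\supp \alpha_\nu$, up to a bounded factor coming from the cap of size $1$.

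\emph{Step 3: conclude via the orthant Hardy inequality.} Set $f_\nu = M_{\alpha_\nu} f$. Then
\[
f_\nu \in \PW^1(O_\nu), \qquad \|f_\nu\|_{L^1} \lesssim \|f\|_{L^1}.
\]
Using the comparison from Steps 1 and 2,
\[
\int_P \frac{|\hat f|}{\omega_P}
\le \sum_\nu \int_{\supp\alpha_\nu\cap P} \frac{|\hat f_\nu|}{\omega_P}
\lesssim \sum_\nu \int_{O_\nu} \frac{|\hat f_\nu|}{\omega_{O_\nu}} .
\]
Each term on the right is now an instance of the orthant Hardy inequality. After an affine change of variables, this is the product Hardy space inequality
\[
\int_{\R^n_+} \frac{|\hat g(x)|}{x_1\cdots x_n}\,dx \lesssim \|g\|_{L^1}, \qquad g \in \PW^1(\R^n_+),
\]
which is \cite[Theorem 1.2]{bampouras2026nehari} in the orthant case. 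It can also be obtained by iterating the one-variable Hardy inequality, since $\PW^1(\R^n_+)$ functions are $H^1$ in each variable separately. Under the affine change of variables the Jacobian factors cancel, because $\omega$ transforms like Lebesgue measure. Summing over the finitely many $\nu$, of which there are at most $C(n,N)$, gives the lemma.
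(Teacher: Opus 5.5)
There is a genuine gap, and it sits exactly at the step you flagged. Step~2 asks for three things: smooth symbols $\alpha_\nu$ with $\|\mathcal{F}^{-1}\alpha_\nu\|_{L^1}\lesssim 1$; $\sum_\nu\alpha_\nu=1$ on $P$; and orthants $O_\nu\supset P$ with $\omega_P\gtrsim\omega_{O_\nu}$ on $\supp\alpha_\nu\cap P$. No such family exists once $P$ has a \emph{non-simple} vertex $u$, meaning a vertex lying on more than $n$ facets (the apex of a square pyramid in $\R^3$, or any vertex of the octahedron).

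To see this, note that by continuity some $\alpha_\nu(u)\neq 0$, so $\alpha_\nu$ is nonzero near $u$. Since $O_\nu$ has only $n$ facet hyperplanes, some facet $F$ of $P$ through $u$ does not lie in one of them. Hence points of $F$ arbitrarily close to $u$ are interior to $O_\nu$. Approaching them from inside $P$, we get $\omega_P\to 0$ while $\omega_{O_\nu}$ stays bounded below, so $\omega_{O_\nu}/\omega_P$ is unbounded on $\supp\alpha_\nu\cap P$.

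Your sentence ``by Lemma~\ref{polyhedronpartition}, locally near each vertex it suffices to treat $T=\Sigma_k\times\R^\ell_+$'' is precisely the splitting $f=\sum_m\Pi_{T_m}f$ that you correctly ruled out. The simplicial cones triangulating the tangent cone $K_u$ all meet at $u$. Separating them requires angular cutoffs centred at $u$, whose symbols are discontinuous at $u$ and hence are not $L^1$ multipliers, and nothing in your argument substitutes for them on $\PW^1(P)$. The comparison $\omega_{K_u}\ge\omega_O$ for $O\subset K_u$ in Step~1 has the same defect: it is only usable after restricting $\hat f$ sharply to $O$.

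The same obstruction appears at infinity. ``Symbols depending only on the bounded coordinates'' presupposes a global product structure that general unbounded polyhedra lack. For example, $\{z>|x|+|y|,\ z>1\}\subset\R^3$ has only simple vertices, but its recession cone is not simplicial. Regions far out along different lateral facets therefore need different orthants, and no coordinate is bounded on $P$.

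Finally, where your scheme does work, the constants depend on the shape of $P$, not only on $(n,N)$ as the statement requires. This covers polyhedra with simple vertices and product structure at infinity, such as bounded simple polytopes. Your orthant case via iterated one-variable Hardy inequalities is fine. But a fixed-scale partition degenerates, for instance, when two vertices nearly collide after affine normalisation.

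The paper avoids all of this. It imports the polytope case from \cite[Theorem~1.2]{bampouras2026nehari}, with a constant $C(n,N)$ uniform over all polytopes with $N$ facets, and handles unbounded $P$ by exhaustion. By Lemma~\ref{compactdense} one may take $f\in\mathcal{F}^{-1}C_c^\infty(P)$. Then $\supp\hat f\subset P_k=P\cap(-k,k)^n$ for some $k$, which is a polytope with at most $N+2n$ facets. Since $P_k\subset P$ gives $\omega_{P_k}\le\omega_P$,
\[
\int_P\frac{|\hat f|}{\omega_P}\le\int_{P_k}\frac{|\hat f|}{\omega_{P_k}}\le C(n,N+2n)\|f\|_{L^1}.
\]
This step relies essentially on the uniformity of the constant in $(n,N)$. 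To complete your argument you would need either that polytope result or a genuinely new mechanism for non-simple vertices; finer smooth localisation alone cannot supply it.
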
 
	\begin{proof}
		This result is shown in \cite{bampouras2026nehari} for polytopes, that is, for bounded polyhedra. To generalize it to unbounded polyhedra, note, 
		by Lemma \ref{compactdense}, that it suffices to treat functions $f \in \mathcal{F}^{-1}C_c^\infty(P)$. First, let us notice that $P$ can be written as the union of (bounded) polytopes $P_k$ with a uniformly bounded number of facets, satisfying $P_k\subset P_{k+1}$. To see this, let $P_k=P\cap (-k,k)^n$ and notice that the number of facets of $P_k$ is bounded by the number of facets of $P$ plus the $2n$ facets of $(-k,k)^n$. Since $f\in \mathcal{F}^{-1}C_c^\infty(P)$, we can find $k$ such that $\supp\hat{f}\subset P_k$. By \cite[Theorem 1.2]{bampouras2026nehari}, there is a constant $C(n,N+2n)$ such that
		$$\int_{P_k} \dfrac{|\hat{f}(x)|}{\omega_{P_k}(x)}dx\leq C(n,N+2n)\|f\|_{L^1},$$
		and thus, since $\omega_{P_k}\leq \omega_{P},$
		\[\int_{P} \dfrac{|\hat{f}(x)|}{\omega_{P}(x)}dx\leq \int_{P_k} \dfrac{|\hat{f}(x)|}{\omega_{P_k}(x)}dx\leq C(n,N+2n)\|f\|_{L^1}. \qedhere \]
	\end{proof}

	By interpolation, we can now establish that when  $q \leq p$, the $(p, q, d)$ Fourier inequality is always true for the critical exponent $d = d_c(p,q) = 1+\frac{p}{q}-p$, as long as the necessary conditions $q \leq 2$ and $p\leq q'$ hold.
	\begin{theo}\label{hardyall}
		For $1\leq q<2$ and $q\leq p \leq q'$, the boundary-weighted Fourier inequality
		$$\int_{P} \dfrac{|\hat{f}(x)|^p}{\omega_P^{1+\frac{p}{q}-p}(x)}dx\leq C\|f\|_{L^q}^p, \quad f\in\PW^q(P),$$
		holds.
	\end{theo}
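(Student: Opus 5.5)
We will obtain the inequality by complex interpolation between two endpoint estimates, using the interpolation scale of Theorem~\ref{interpolation}. The first endpoint is the Hardy inequality of Lemma~\ref{polyhbamp}, the $(1,1,1)$ case: the map $f\mapsto \hat f|_P$ is bounded from $\PW^1(P)$ into $L^1(P,\omega_P^{-1}dx)$. The second endpoint is Plancherel, the $(2,2,0)$ case: $f\mapsto \hat f|_P$ is an isometry $\PW^2(P)\to L^2(P)$. Both endpoints have $d=d_c$, since $d_c(1,1)=1$ and $d_c(2,2)=0$. In addition there is a trivial estimate $\PW^1(P)\to L^\infty(P)$, because $\|\hat f\|_\infty\le\|f\|_1$; note that $d_c(\infty,1)=1+\infty-\infty$ is interpreted through the weighted $L^p$ scale as weight exponent $0$.

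\textbf{Step 1: the diagonal $p=q$.} For $1<q<2$, interpolate the linear map $T f=\hat f|_P$ between $\PW^1(P)\to L^1(\omega_P^{-1})$ and $\PW^2(P)\to L^2(\omega_P^{0})$. Theorem~\ref{interpolation} with $p=2$ identifies $(\PW^1(P),\PW^2(P))_\theta=\PW^q(P)$. On the target side, the Stein--Weiss interpolation theorem gives
\[
(L^1(w_0),L^2(w_1))_\theta=L^q(w),\qquad w^{1/q}=w_0^{(1-\theta)/1}w_1^{\theta/2}.
\]
With $w_0=\omega_P^{-1}$ and $w_1=1$, this yields $w=\omega_P^{-q(1-\theta)}$. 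Since $1-\theta=2/q-1$, we get $q(1-\theta)=2-q=d_c(q,q)$. This proves the diagonal case.

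\textbf{Step 2: from the diagonal to $q\le p\le q'$.} Fix $q$ and interpolate in $p$ between the diagonal estimate $\PW^q\to L^q(\omega_P^{-(2-q)})$ and the Hausdorff--Young endpoint $\PW^q\to L^{q'}(P)$, which is unweighted and has $d_c(q',q)=1+q'/q-q'=0$. Here only the target spaces change, so the Riesz--Thorin/Stein--Weiss theorem applies directly to a fixed domain $\PW^q(P)$, and no interpolation of Paley--Wiener spaces is needed. For $1/p=(1-t)/q+t/q'$, the resulting weight exponent is
\[
d=p(1-t)(2-q)/q.
\]
We then check that this equals $1+p/q-p$. Indeed, $d_c(p,q)/p=1/p+1/q-1$ is affine in $1/p$, and it takes the values $(2-q)/q$ at $p=q$ and $0$ at $p=q'$, so the interpolated exponent matches $d_c$ by linearity. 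For $q=1$ the diagonal case is Lemma~\ref{polyhbamp} itself, and the interpolation runs against $L^\infty$.

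\textbf{Main obstacle.} The delicate point is Step~1, namely justifying complex interpolation of the domain spaces $\PW^1(P)$ and $\PW^2(P)$. This is exactly what Theorem~\ref{interpolation} provides, and it is essential because no bounded projection exists on $L^1$. Two technical checks remain. First, weighted $L^p$ spaces with weight $\omega_P^{-1}$ form a compatible couple, which holds since $\omega_P$ is positive and locally bounded. Second, by density (Lemma~\ref{compactdense}), it suffices to verify the bound on $\mathcal{F}^{-1}C_c^\infty(P)$, where $T$ is well defined on the sum space.
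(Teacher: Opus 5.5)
Your proposal is correct and follows the paper's proof. The diagonal case comes from interpolating the Hardy inequality of Lemma~\ref{polyhbamp} against Plancherel via Theorem~\ref{interpolation}, and the case $q=1$ comes from the trivial $L^\infty$ bound. The only cosmetic difference is that you reach $q\le p\le q'$ by interpolating the target spaces (Stein--Weiss, affine in $1/p$), whereas the paper applies H\"older's inequality directly with $p=\theta q+(1-\theta)q'$; for a fixed function these are the same log-convexity estimate.
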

	
	\begin{proof}
		Since the boundary-weighted Fourier inequality holds for $p=q=d=1$, by Lemma \ref{polyhbamp}, the Fourier transform is a bounded operator
		$$\mathcal{F}:\PW^1(P)\to L^1(P,\omega_P^{-1}).$$ 
		For $q = 1$ the result is then immediate:
		$$\int_P\dfrac{|\hat{f}(x)|^p}{\omega_P(x)}dx\leq \|f\|_{L^1}^{p-1}\int_P\dfrac{|\hat{f}(x)|}{\omega_P(x)}dx.$$
		When $1 < q < 2$, we continue by noting that
		$$\mathcal{F}:\PW^2(P)\to L^2(P).$$ 
		Interpolating with Theorem \ref{interpolation}, we derive that $\mathcal{F}:\PW^q(P)\to L^q(P,\omega_P^{q-2})$ is bounded, that is,
		$$\int_P\dfrac{|\hat {f}(x)|^q}{\omega_P^{2-q}(x)}dx\leq C\|f\|_{L^q}^q.$$
		Let $\theta\in [0,1]$ be such that $p=\theta q +(1-\theta)q'$, so that 
		$$1+\frac{p}{q}-p=\theta(2-q).$$ 
		Then, by H\"older's inequality, we get that
		\begin{eqnarray} \int_P \dfrac{|\hat{f}(x)|^p}{\omega_P^{1+\frac{p}{q}-p}(x)}dx&=&\int_P|\hat{f}(x)|^{(1-\theta)q'} \dfrac{|\hat{f}(x)|^{\theta q}}{\omega_P^{\theta(2-q)}(x)}dx \nonumber \\
			&\leq& 
			\left(\int_P |\hat{f}(x)|^{q'} dx\right)^{1-\theta} \left(\int_P \dfrac{|\hat{f}(x)|^{ q}}{\omega_P^{(2-q)}(x)}dx\right)^\theta \nonumber \\ 
			&\lesssim& 
			\|f\|_{L^q}^{q'(1-\theta)}\|f\|_{L^q}^{q\theta}=\|f\|_{L^q}^p, \nonumber 
		\end{eqnarray} 
		as desired.
	\end{proof}
	When $1\leq p<q<2$ we can also give a complete answer.
	
	\begin{prop}
		Let $1\leq p<q<2$. Then the following hold.
		\begin{enumerate}
			\item If $P$ is bounded, the boundary-weighted Fourier inequality 
			$$\int_P \dfrac{|\hat{f}(x)|^p}{\omega_P^d(x)}dx\lesssim \|f\|_{L^q}^p, \quad f\in \PW^q(P),$$ holds if and only if $d< d_c(p,q) = 1+\frac{p}{q}-p$.
			\item If $P$ is unbounded, the boundary-weighted Fourier inequality $$\int_P \dfrac{|\hat{f}(x)|^p}{\omega_P^d(x)}dx\lesssim \|f\|_{L^q}^p, \quad f\in \PW^q(P),$$ fails for every $d\in\mathbb{R}$.
		\end{enumerate}
	\end{prop}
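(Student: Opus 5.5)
The necessity directions are already contained in Theorem~\ref{plessqd=dc}. If the $(p,q,d)$ inequality holds with $1\le p<q<2$, then $\Omega$ must be bounded and $d<d_c$. This gives part (2) outright, and it gives the ``only if'' half of part (1). What remains is sufficiency for bounded $P$ and $d<d_c(p,q)$.

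For sufficiency I would interpolate via Hölder between the diagonal critical inequality and the integrability of negative powers of $\omega_P$. Theorem~\ref{hardyall} with $p=q$ gives
\[
\int_P \frac{|\hat f|^q}{\omega_P^{2-q}}\,dx\lesssim \|f\|_{L^q}^q .
\]
Write
\[
\frac{|\hat f|^p}{\omega_P^{d}}=\frac{|\hat f|^p}{\omega_P^{p(2-q)/q}}\cdot \omega_P^{\,p(2-q)/q-d}.
\]
Apply Hölder with exponents $q/p$ and $q/(q-p)$ to get
\[
\int_P \frac{|\hat f|^p}{\omega_P^d}\,dx\le \Big(\int_P\frac{|\hat f|^q}{\omega_P^{2-q}}\,dx\Big)^{p/q}\Big(\int_P \omega_P^{-r}\,dx\Big)^{(q-p)/q},
\qquad r=\frac{q}{q-p}\Big(d-\frac{p(2-q)}{q}\Big).
\]
The first factor is bounded by $\|f\|_{L^q}^p$.

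It then suffices that $\omega_P^{-r}\in L^1(P)$, that is, $r\in A(P)=(-\infty,1)$, which holds for polytopes as recalled before Lemma~\ref{lem:qis2}. The condition $r<1$ reads $d-\frac{p(2-q)}{q}<\frac{q-p}{q}$, i.e.
\[
d<\frac{q-p+2p-pq}{q}=1+\frac pq-p=d_c ,
\]
which is exactly the hypothesis. When $r\le0$, integrability is trivial since $\omega_P$ is bounded on a bounded set.

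I expect no real obstacle. The only input needing care is the fact $A(P)=(-\infty,1)$ for bounded polyhedra, namely that $\int_P\omega_P^{-r}<\infty$ for $r<1$. If this needs justification beyond the citation \cite{MR1194970}, I would use the admissible cover. At level $a^j$, the volume $m(\Delta_j)$ is comparable to $a^j|\log a^j|^{n-1}$ for polytopes. Summing $a^{j(1-r)}|j|^{n-1}$ over $j\le C$ then converges precisely when $r<1$.
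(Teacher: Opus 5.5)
Your proposal is correct and matches the paper's proof: necessity comes from Lemma~\ref{bestd} and Theorem~\ref{plessqd=dc}, and sufficiency uses the same H\"older split with exponents $q/p$ and $q/(q-p)$ against the diagonal case $p=q$ of Theorem~\ref{hardyall}, reducing everything to $\omega_P^{-r}\in L^1(P)$ for $r<1$. The only cosmetic difference is the justification of that integrability fact: the paper cites \cite{Schuett1991}, whereas you point to the remark before Lemma~\ref{lem:qis2} and sketch a cover-based argument via $m(\Delta_j)\approx a^j|j|^{n-1}$, which is also fine.
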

	\begin{proof}
		By Lemma~\ref{bestd} and Theorem~\ref{plessqd=dc}, we have already covered the case of unbounded $P$, and shown that it is necessary that $d < 1+\frac{p}{q}-p$ when $P$ is bounded. 
		
		Therefore let us assume that $P$ is a polytope and $d<1+\frac{p}{q}-p$. By \cite{Schuett1991}, we know that $\omega_P^\beta\in L^1(P)$ if and only if $\beta>-1$. Therefore, by H\"{o}lder's inequality and Theorem \ref{hardyall}, we get that
		\begin{eqnarray}
			\int_P \dfrac{|\hat{f}(x)|^p}{\omega_P^d(x)}dx&=&\int_P \left(\dfrac{|\hat{f}(x)|^q}{\omega_P^{2-q}(x)}\right)^{p/q}\omega_P^{p(2-q)/q-d}(x) \, dx \nonumber \\
			&\leq& \left(\int_P \dfrac{|\hat{f}(x)|^q}{\omega_P^{2-q}(x)}dx\right)^{p/q}\left(\int_P \omega_P^{(p(2-q)/q-d)q/(q-p)}(x)dx\right)^{(q-p)/q}\nonumber \\
			&\lesssim& \|f\|_{L^q}^p, \nonumber
		\end{eqnarray}
		where the last inequality holds since $$(p(2-q)/q-d)q/(q-p)>-1.$$ This completes the proof.
	\end{proof}

	\section{Boundary-weighted Fourier inequalities for the ball}\label{Sec5}
	In this and the next section we will explore boundary-weighted Fourier inequalities for the unit ball $B \subset \R^n$. We start by verifying the sufficient criterion established in Lemma~\ref{1/omega}, implying the validity of Helson's inequality. We do so through a nearly explicit formula. 
	\begin{prop}\label{prop:symbol-ball}
		There exists $\psi\in L^\infty(\mathbb{R}^n)$ such that 
		$$\hat{\psi}|_{B}=\frac{1}{\omega_{B}},$$
		as distributions in $B$. 
	\end{prop}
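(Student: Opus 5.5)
The plan is to exploit that $\omega_B$ is an explicit radial function whose boundary singularity is an exact power of $1-|x|^2$ times a smooth positive factor. Then $1/\omega_B$ can be realized on $B$ as the Fourier transform of a Bessel-type kernel, which is bounded precisely because the exponent is the critical one, $-(n+1)/2$.

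\emph{Step 1 (structure of $\omega_B$).} Since $\omega_B(x)=m(B\cap B(2x,1))$, slicing perpendicular to $x$ gives
\[
\omega_B(x)=2c_n\int_{|x|}^1(1-t^2)^{\frac{n-1}{2}}\,dt .
\]
Substituting $u=1-t^2$ turns this into
\[
\omega_B(x)=c_n\int_0^{1-|x|^2}\frac{u^{\frac{n-1}{2}}}{\sqrt{1-u}}\,du=(1-|x|^2)^{\frac{n+1}{2}}F(1-|x|^2),
\]
where $F$ is real-analytic and positive on $[0,1)$ with $F(0)>0$. Fix a radial cutoff $\chi\in C_c^\infty(B)$ equal to $1$ on $B(0,1/2)$. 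Then $\chi/\omega_B\in C_c^\infty$ is the Fourier transform of a Schwartz function, which is in $L^\infty$. Near the sphere,
\[
(1-\chi)/\omega_B=(1-|\xi|^2)^{-\frac{n+1}{2}}\,a(\xi)\quad\text{on }B,
\]
where $a=(1-\chi)/F(1-|\xi|^2)$ extends to a function $\tilde a\in C_c^\infty(\R^n)$.

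\emph{Step 2 (the Riesz family).} Consider the entire family of distributions
\[
u_\delta=\frac{(1-|\xi|^2)_+^{\delta}}{\Gamma(\delta+1)},
\]
whose inverse Fourier transform is the classical Bochner--Riesz kernel
\[
K_\delta(y)=c\,\frac{J_{n/2+\delta}(2\pi|y|)}{|y|^{n/2+\delta}},
\]
with the identity persisting for all $\delta\in\mathbb{C}$ by analytic continuation. On the open ball, $u_\delta$ coincides with the function $(1-|\xi|^2)^\delta/\Gamma(\delta+1)$. At $\delta=-(n+1)/2$ the Bessel order is $\nu=-1/2$, and $r^{1/2}J_{-1/2}(2\pi r)\propto\cos(2\pi r)$. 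More generally the kernel has the form $r^{-\nu}J_\nu$ with $\nu\ge -1/2$ a half-integer, which is bounded near $0$ and, by the large-$r$ asymptotics $J_\nu(r)\sim r^{-1/2}\cos(\cdot)$, bounded at infinity. Setting
\[
\psi=c'\,K_{-\frac{n+1}{2}}*\mathcal F^{-1}\tilde a+\mathcal F^{-1}(\chi/\omega_B),
\]
we get $\psi\in L^\infty$ by Young's inequality ($L^\infty*L^1$). Moreover $\hat\psi=c'u_{-(n+1)/2}\tilde a+\chi/\omega_B$, which equals $1/\omega_B$ on $B$ after normalizing $c'$.

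\emph{Main obstacle: $n$ odd.} The normalization breaks down when $n$ is odd. Then $-(n+1)/2=-m$ is a negative integer, $\Gamma(\delta+1)$ has a pole there, and $u_{-m}$ is supported on the sphere, so its restriction to $B$ vanishes. For even $n$ the argument above should be complete. For odd $n$ I would first try the finite part of the unnormalized family $(1-|\xi|^2)_+^\delta$ at $\delta=-m$, which still agrees with $(1-|\xi|^2)^{-m}$ on $B$. Its inverse transform involves $\partial_\nu\bigl(r^{-\nu}J_\nu(r)\bigr)$ at $\nu=-1/2$. From the power series, this derivative has no logarithm near $0$. From the asymptotics, however, it may produce a $\log r\cos r$ term at infinity, and that is exactly the danger.

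To remove any such growth, I would use the freedom available off $B$. The extension $\tilde a$ and the choice of distribution outside $\overline B$ are arbitrary, and one may add sphere-supported distributions, namely derivatives of surface measure, whose transforms are explicit Bessel functions. The aim is to choose these so that the logarithmic oscillatory terms cancel, leaving $\psi\in L^\infty$.

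If this fails, the fallback is to treat the odd case through the $(n+1)$-dimensional ball, where the exponent $-(n+2)/2$ is a non-integer half-integer and the even-case argument applies. One would then descend to $\R^n$ by integrating out one variable, provided the relevant weights are compatible under slicing. Checking that compatibility is the step I expect to require the most care.
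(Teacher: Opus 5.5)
For even $n$ your argument is correct, and it is the paper's argument in different notation. The paper uses $T_\alpha=\frac12\bigl((1-|x|^2+i0)^{-\alpha}+(1-|x|^2-i0)^{-\alpha}\bigr)$ with $\alpha=\frac{n+1}{2}$. Since $(t\pm i0)^{\lambda}=t_+^{\lambda}+e^{\pm i\pi\lambda}t_-^{\lambda}$ for non-integer $\lambda$, this average is exactly $(1-|x|^2)_+^{-\alpha}=\Gamma(1-\alpha)\,u_{-\alpha}$ when $\alpha$ is a half-integer. One small correction to Step 1: $\chi/\omega_B$ is not in $C_c^\infty$. Indeed $\omega_B(x)=g(|x|)$ with $g'(0)=-2c_n\neq 0$, so $\omega_B$ has a conical point at the origin. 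This is harmless, since $\chi/\omega_B\in L^1$ already gives $\mathcal F^{-1}(\chi/\omega_B)\in L^\infty$, which is how the paper treats the interior part.

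For odd $n$ (already $n=3$) there is a genuine gap: no suitable distribution is produced, and neither repair you sketch can work.
\begin{itemize}
\item Your worry about the finite part is justified, not merely possible. Its inverse transform contains $\partial_\nu\bigl(r^{-\nu}J_\nu(2\pi r)\bigr)$ at $\nu=-\frac12$, whose large-$r$ expansion contains $-\pi^{-1}\log r\,\cos(2\pi r)$, which is unbounded.
\item Sphere-supported corrections cannot remove this. The inverse transform of $\delta_0^{(k)}(1-|\xi|^2)$ is a multiple of $r^{-\nu}J_\nu(2\pi r)$ with $\nu=\frac n2-k-1$ a half-integer. That is an elementary function (powers of $r$ times $\cos(2\pi r)$ and $\sin(2\pi r)$), so there is no logarithm to cancel against.
\item The detour through $\R^{n+1}$ fails structurally. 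Restricting $\mathcal F^{-1}\bigl((1-|\xi|^2)_+^{\delta}\bigr)$ from $\R^{n+1}$ to a hyperplane is the same as integrating out $\xi_{n+1}$. It yields $\sqrt{\pi}\,\Gamma(\delta+1)\,\mathcal F^{-1}u_{\delta+1/2}$ on $\R^n$, and at $\delta=-\frac{n+2}{2}$ this is a multiple of $u_{-m}$, which again vanishes on $B$. Integrating out $y_{n+1}$ instead restricts the symbol to $\xi_{n+1}=0$ and gives the wrong exponent $-\frac{n+2}{2}$.
\end{itemize}

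The missing idea is a specific use of the freedom off $\overline B$, which you identified but did not exploit. Continue $(1-|\xi|^2)^{-m}$ by the same formula to $\{|\xi|>1\}$ and regularize symmetrically across the sphere. That is, take $\frac12\bigl((P+i0)^{-m}+(P-i0)^{-m}\bigr)$ with $P=1-|\xi|^2$. The distributions $(P\pm i0)^{-\lambda}$ are entire in $\lambda$. For $\Real\lambda>\frac n2$ their inverse transforms are $C^{\pm}_{\lambda,n}|y|^{\lambda-n/2}H^{\pm}_{\lambda-n/2}(2\pi|y|)$. At $\lambda=\frac{n+1}{2}$ the order-$\frac12$ Hankel functions give constant multiples of $e^{\pm 2\pi i|y|}$, which are bounded. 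This distribution differs from the finite part of $P_+^{-m}$ only by a piece supported in $\{|\xi|\geq 1\}$. Apart from sphere-supported terms, that piece is the exterior $P^{-m}$, and it is exactly this exterior piece that cancels the $\log r\cos(2\pi r)$ growth.
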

	
	\begin{proof}
		Let $\chi$ be a smooth function supported in a sufficiently thin annulus $R = \{1-\epsilon<|x|<1+\epsilon\}$, with $\chi \equiv 1$ in $\{1-\epsilon/2<|x|<1+\epsilon/2\}$. Let 
		$$P(x) = 1 - |x|^2, \qquad \alpha = (n+1)/2.$$ On $B$, a computation shows that 
		$$\omega_B^{-1}= aP^{-\alpha} = (1-\chi)\omega_B^{-1} + \chi a P^{-\alpha},$$
		where $a$ is real analytic and non-vanishing in the annulus $R$. Noting that $(1-\chi)\omega_B^{-1} \in L^1 \subset \mathcal{F} L^\infty$ and that $\chi a \in C^\infty_c(\R^n) \subset \mathcal{F} L^1$, it suffices to find a distribution $T_\alpha \in \mathcal{F} L^\infty$ which coincides with $P^{-\alpha}$ on the open unit ball $B$. 
		
		For this purpose, let 
		\[
		u_\alpha
		=
		\frac12\left((t+i0)^{-\alpha}+(t-i0)^{-\alpha}\right) 
		\in \mathcal S'(\mathbb R),
		\]
		where the tempered distributions $(t \pm i0)^{-\alpha}$ are obtained by analytic continuation of the locally integrable distributions
		\begin{equation} \label{eq:distdef}
			(t \pm i0)^{\lambda} = \lim_{\varepsilon \to 0^+} (t \pm i\varepsilon)^\lambda, \qquad \Real \lambda > -1.
		\end{equation}
		Actually the distributional equality \eqref{eq:distdef} remains valid for non-integer $\lambda = -\alpha$ (corresponding to the even dimensions $n$), while for $\lambda = -1, -2, \ldots,$ $(t \pm i0)^{\lambda}$ can be defined by the usual jump formula $(t \pm i0)^{-1} = \operatorname{pv} t^{-1} \mp i\pi \delta_0$, and differentiation thereof. See \cite[Sections~3.6 and 4.4] {GelfandShilov1964} for details. Furthermore, since $P$ is a submersion in a neighbourhood of $\partial B$, we may consider the pullback distribution
		$$T_\alpha = P^\ast u_\alpha = 	\frac12\left(
		(1-|x|^2+i0)^{-\alpha}
		+
		(1-|x|^2-i0)^{-\alpha}
		\right).$$
		As distributions inside $B$ we of course have that $T_\alpha|_B = P^{-\alpha}$.
		
		We now use a classical Bessel kernel
		formula, see \cite[Ch. III, Sec. 2.8, Eqs. (10) and (11)]{GelfandShilov1964}. It says, adapted to our Fourier convention, that for $\Real \lambda > n/2$, we have, as a distributional identity,
		\[
		\mathcal F^{-1}
		\left((1-|x|^2\pm i0)^{-\lambda}\right)(y)
		=
		C_{\lambda, n}^{\pm}
		|y|^{\lambda-n/2}
		H^{\pm}_{\lambda - n/2}(2\pi |y|).
		\]
		where $H^\pm_\nu$ denote the two Hankel functions $H_\nu^{(1)}$ and $H_\nu^{(2)}$, and
		$$C_{\lambda, n}^{\pm} = \pm i e^{\mp i\pi n/2} \frac{\pi^{\lambda+1}}{\Gamma(\lambda)}.$$ 
		For the particular exponent \(\alpha=(n+1)/2\), we thus have
		\[
		\mathcal F^{-1}
		\left((1-|x|^2\pm i0)^{-\alpha}\right)(y)
		=
		C_n |y|^{1/2}H^\pm_{1/2}(2\pi |y|),
		\]
		where, by the first half-integer Hankel formula,
		\[
		H^{(1)}_{1/2}(t)
		=
		-i\left(\frac{2}{\pi t}\right)^{1/2}e^{it},
		\qquad
		H^{(2)}_{1/2}(t)
		=
		i\left(\frac{2}{\pi t}\right)^{1/2}e^{-it}, \qquad t > 0.
		\]
		We conclude that indeed $T_\alpha \in \mathcal{F} L^\infty$.
	\end{proof}

	As a consequence of Lemma \ref{1/omega}, we have therefore established Helson's inequality
	$$\int_B\dfrac{|\hat{f}(x)|^2}{\omega_B(x)}dx\lesssim \|f\|_{L^1}^2,\quad f\in \PW^1(B).$$
	We wish to extend this result to other values of $q$, but we cannot apply the techniques of Section \ref{Sec4}, since we do not know whether the $\PW^p(B)$-spaces sit in an interpolation scale. To overcome this, we first sharpen the proof of Proposition~\ref{prop:symbol-ball}.
	\begin{lm} \label{lem:ballinterp}
		\label{lem:analytic-family-ball}
		There exists a family of distributions \(M_z\), analytic for
		\(0<\operatorname{Re}z<1\) and continuous for
		\(0\leq\operatorname{Re}z\leq1\), such that
		\[
		M_z|_B=\omega_B^{-z},
		\]
		and there is a constant \(C>0\) such that, for all \(t\in\mathbb R\),
		\[
		\|M_{it}\|_{L^\infty}
		\leq Ce^{C|t|},
		\qquad
		\|\mathcal F^{-1}M_{1+it}\|_{L^\infty}
		\leq Ce^{C|t|}.
		\]
	\end{lm}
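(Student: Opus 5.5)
We build $M_z$ by making the construction in the proof of Proposition~\ref{prop:symbol-ball} depend analytically on the exponent. On $B$ we have $\omega_B = b\,P^{\alpha}$, where $P(x)=1-|x|^2$, $\alpha=(n+1)/2$, and $b$ is smooth, positive and real analytic in the annulus $R$. Hence $\omega_B^{-z} = b^{-z}P^{-\alpha z}$ there. With the cutoff $\chi$ as before, we define
\[
M_z = (1-\chi)\,\omega_B^{-z}\chi_B + \chi\, b^{-z}\, T_{\alpha z},\qquad T_{\lambda}=\tfrac12\bigl((1-|x|^2+i0)^{-\lambda}+(1-|x|^2-i0)^{-\lambda}\bigr).
\]
The restriction $M_z|_B=\omega_B^{-z}$ is then immediate. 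Analyticity in $z$ follows because $\lambda\mapsto (t\pm i0)^{-\lambda}$ is an entire family of tempered distributions, which is Gelfand--Shilov analytic continuation. Pulling back by the submersion $P$ near $\partial B$ and multiplying by the smooth factor $\chi b^{-z}$, which is entire in $z$, preserves analyticity.

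At $\Real z=0$ we need $M_{it}\in L^\infty$. The first term is bounded by $1$ in modulus. For the second, $(t\pm i0)^{-i s}$ with $s$ real is the bounded function $|t|^{-is}$ times $e^{\pm \pi s}$ on $t<0$. So $T_{i\alpha t}$ is a bounded function with norm at most $Ce^{C|t|}$, and $|b^{-it}|=1$. This gives $\|M_{it}\|_{L^\infty}\le Ce^{C|t|}$.

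At $\Real z=1$ we need $\mathcal F^{-1}M_{1+it}\in L^\infty$. The first term lies in $L^1$ with norm $\lesssim 1$, since $\Real z=1$ and $(1-\chi)\omega_B^{-1}$ is bounded and compactly supported. Hence its inverse Fourier transform is bounded. For the second term we follow Proposition~\ref{prop:symbol-ball} with $\lambda=\alpha(1+it)$, so that $\Real\lambda=\alpha>n/2$ and the Bessel kernel formula applies:
\[
\mathcal F^{-1}(1-|x|^2\pm i0)^{-\lambda}(y)=C^\pm_{\lambda,n}|y|^{\lambda-n/2}H^\pm_{\lambda-n/2}(2\pi|y|),\qquad \lambda-n/2=\tfrac12+i\alpha t.
\]
We must bound this uniformly in $y$, with constants growing at most exponentially in $|t|$.

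\textbf{Large $|y|$.} The Hankel asymptotics $H^\pm_\nu(r)\sim(2/\pi r)^{1/2}e^{\pm i(r-\nu\pi/2-\pi/4)}$ hold with a uniform error for $\Real\nu=1/2$. Combined with $\bigl||y|^{\lambda-n/2}\bigr|=|y|^{1/2}$, they give boundedness. The main obstacle is uniformity in $\Im\nu$. I would use Schläfli's integral representation $H^{(1)}_\nu(r)=\frac{2e^{-i\pi\nu/2}}{i\sqrt\pi\,\Gamma(\nu+\frac12)}(r/2)^\nu\int_1^\infty e^{irs}(s^2-1)^{\nu-1/2}ds$, interpreted by contour rotation. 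Alternatively, I would use the Poisson-type representation $H^{(1)}_\nu(r)=\sqrt{2/(\pi r)}\,\frac{e^{i(r-\nu\pi/2-\pi/4)}}{\Gamma(\nu+1/2)}\int_0^\infty e^{-u}u^{\nu-1/2}(1+\frac{iu}{2r})^{\nu-1/2}du$, which is valid for $\Real\nu>-1/2$. With $\nu-\tfrac12=i\alpha t$, the integrand has modulus at most $e^{-u}e^{\pi\alpha|t|}$. Stirling's formula controls $1/|\Gamma(1+i\alpha t)|\lesssim e^{\pi\alpha|t|/2}$, and the factors $e^{\mp i\pi\nu/2}$ and $C^\pm_{\lambda,n}$ (through $\pi^{\lambda+1}/\Gamma(\lambda)$) contribute at most $e^{C|t|}$. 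This yields $|y|^{1/2}|H^\pm_\nu(2\pi|y|)|\le Ce^{C|t|}$ for all $y$. In fact the representation works for every $r>0$, so it handles small $|y|$ as well.

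\textbf{Small $|y|$.} If needed, the small-$|y|$ regime can also be checked directly from $H_\nu\sim c_\nu r^{-\nu}$, where $|r^{1/2}r^{-\nu}|=1$.

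\textbf{Assembling the estimate.} Multiplication by $\chi b^{-1-it}$ is a convolution on the $\mathcal F^{-1}$ side with $\mathcal F^{-1}(\chi b^{-1-it})$. Its $L^1$ norm is $\lesssim e^{C|t|}$, since derivatives of $b^{-it}$ grow polynomially in $t$. Convolution of an $L^\infty$ function with an $L^1$ function is bounded, which gives $\|\mathcal F^{-1}M_{1+it}\|_{L^\infty}\le Ce^{C|t|}$.

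\textbf{Continuity up to the boundary.} Continuity of $z\mapsto M_z$ on $0\le\Real z\le1$, in the sense of distributions, follows from the same representations.
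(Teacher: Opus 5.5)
Your proposal is correct and follows the paper's proof essentially step for step. You use the same splitting $M_z=(1-\chi)\omega_B^{-z}+\chi\,(\text{analytic factor})^{-z}\,(1-|x|^2\pm i0)^{-\alpha z}$ and the same branch bound $e^{\pi\alpha|t|}$ on $\Real z=0$. On $\Real z=1$ you use the same Gelfand--Shilov Bessel-kernel formula together with Hankel's integral (Watson, \S 7.2) and a polynomial-in-$t$ $L^1$ bound on $\mathcal F^{-1}(\chi b^{-1-it})$. The differences are only cosmetic: you average the $\pm i0$ branches where the paper uses $+i0$ alone, and you spell out the Stirling estimates that the paper leaves implicit.
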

	
	\begin{proof}
		We keep the notation from the proof of Proposition~\ref{prop:symbol-ball}, and additionally introduce a smooth compactly supported function $\widetilde\chi$  such that $\widetilde \chi \equiv 1$ on $\supp \chi$, and such that $a \geq C > 0$ on $\supp \widetilde\chi$. Define
		\[
		a_z
		=
		\widetilde\chi a^{z}.
		\]
		By a standard estimate, see for example \cite[Lemma 2.5]{bampouras2024besovspacesschattenclass}, we then have that
		$$\|\mathcal{F}^{-1} (\chi a_z) \|_{L^1}\leq C \sum_{\gamma\in\{0,1\}^n}\|\partial^\gamma (\chi a_z)\|_{L^\infty}\ \leq C(1+|\Image z|)^n.$$
		
		Next define
		\[
		b_z(\xi)
		=
		(1-\chi(\xi))\omega_B(\xi)^{-z},
		\]
		clearly satisfying
		\[
		\|b_z\|_{L^\infty}
		+
		\|b_z\|_{L^1}
		\leq C,
		\qquad 0\leq \operatorname{Re}z\leq1,
		\]
		and let $T_z$ be the distribution 
		\[
		T_z 
		=
		(1-|x|^2+i0)^{-\alpha z},
		\]
		defined as in Proposition~\ref{prop:symbol-ball}. The sought analytic family of distributions is given by
		\[
		M_z
		=
		b_z+\chi a_z T_z. \qquad 0 \leq \Real z \leq 1.
		\]
		On $B$ it holds that
		\[
		M_z
		=
		(1-\chi)\omega_B^{-z}
		+
		\chi a^z P^{-\alpha z}
		=
		\omega_B^{-z},
		\]	
		and so it remains to prove the boundary estimates.
		
		Consider \(z=1+it\).  The first term is harmless,
		\[
		\|\mathcal F^{-1}b_{1+it}\|_{L^\infty}
		\leq
		\|b_{1+it}\|_{L^1}
		\leq C.
		\]
		By the explicit formulas given in the previous proof, we have that
		\begin{align*}
			\mathcal F^{-1} T_{1+it}(y)
			&=
			ie^{-i\pi n/2} \frac{\pi^{ \alpha (1+it)+1}}{\Gamma(\alpha (1+it))}
			|y|^{\frac{1}{2} + i \alpha t}
			H^{(1)}_{\frac{1}{2} + i \alpha t}(2\pi |y|)  \\ =& e^{-i\pi n/2}
			\frac{e^{i2\pi |y|} \pi^{ \alpha (1+it)}  
				e^{\pi \alpha t/2}}{\Gamma(\alpha (1+it)) \Gamma(1 + i\alpha t)} |y|^{i\alpha t} 	\int_0^\infty
			e^{-s}s^{i\alpha t}
			\left(1+\frac{is}{4\pi |y|}\right)^{i\alpha t}\,ds,
		\end{align*}
		where we also applied a deformed version of the usual contour representation of the Hankel function, see \cite[Ch.~VII, \S 7.2, p.~196]{watson1944bessel}. From this formula it is immediate that
		\[
		\|\mathcal F^{-1}T_{1+it}\|_{L^\infty}
		\leq
		Ce^{C|t|}.
		\]
		In total we have thus proved that
		\[
		\|\mathcal F^{-1}M_{1+it}\|_{L^\infty}
		\leq
		\|\mathcal F^{-1}b_{1+it}\|_{L^\infty}
		+
		\|\mathcal F^{-1}(\chi a_{1+it})\|_{L^1}
		\|\mathcal F^{-1}T_{1+it}\|_{L^\infty}     \leq
		Ce^{C|t|}.
		\]
		
		When \(z=it\), $T_{it}$ is an ordinary function, which due to the branch point of the logarithm satisfies $\|T_{it}\|_{L^\infty}
		\leq e^{\pi\alpha |t|}.$ Thus 
		\[
		\|M_{it}\|_{L^\infty} \leq \|b_{it}\|_{L^\infty} + e^{\pi\alpha |t|} \|\chi a_{it}\|_{L^\infty} \leq C e^{\pi\alpha |t|}. \qedhere
		\]
	\end{proof}
	
	Using Lemma~\ref{lem:ballinterp} to facilitate complex interpolation we can prove the following theorem, which combined with Lemma \ref{bestd} gives Theorem \ref{ball}.
	
	\begin{theo}
		Let $1\leq q<2$ and $2\leq p\leq q'$, $p<\infty$. Then the boundary-weighted Fourier inequality
		$$\int_B\dfrac{|\hat{f}(x)|^p}{\omega_{B}(x)^{1+\frac{p}{q}-p}} \, dx\leq C\|f\|_{L^q}^p, \qquad f\in \PW^q(B),$$ holds for the exponent $d_c(p,q) = 1 + \frac{p}{q} - p$.
	\end{theo}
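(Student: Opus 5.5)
The plan is to reduce the theorem to the two endpoint estimates on the critical line and then interpolate in $(1/q, 1/p)$ by Hölder, just as in Theorem~\ref{hardyall}. For $q$ fixed, write $p=\theta\cdot 2+(1-\theta)q'$. Then $d_c(p,q)=\theta d_c(2,q)+(1-\theta)d_c(q',q)$ and $d_c(q',q)=0$. Hölder's inequality together with Hausdorff--Young therefore reduces the claim to the single case $p=2$:
\begin{equation*}
\int_B \frac{|\hat f(x)|^2}{\omega_B(x)^{2/q-1}}\,dx\lesssim \|f\|_{L^q}^2,\qquad f\in\PW^q(B),\ 1\le q<2 .
\end{equation*}
Concretely, one bounds $|\hat f|^p\omega_B^{-d_c}=(|\hat f|^2\omega_B^{-(2/q-1)})^{\theta}|\hat f|^{(1-\theta)q'}$, checking that $\theta(2/q-1)=d_c(p,q)$, and applies Hölder with exponents $1/\theta$ and $1/(1-\theta)$.

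The $p=2$ estimate will come from complex interpolation on the symbol side using Lemma~\ref{lem:ballinterp}, since we cannot interpolate the spaces $\PW^q(B)$ themselves. By Lemma~\ref{compactdense} it suffices to take $f,g\in\mathcal F^{-1}C_c^\infty(B)$. We consider the bilinear form
\[
\Phi_z(f,g)=\langle \hat f M_z,\hat g\rangle=\int_B \hat f\,\overline{\hat g}\,\omega_B^{-z}\,dx ,
\]
which is legitimate because $M_z|_B=\omega_B^{-z}$ and $\hat f\overline{\hat g}$ is compactly supported in $B$.

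At $\Re z=1$ we write $\Phi_{1+it}(f,g)=\langle f*\mathcal F^{-1}M_{1+it},g\rangle$, giving $|\Phi_{1+it}|\le Ce^{C|t|}\|f\|_{L^1}\|g\|_{L^1}$. At $\Re z=0$ Plancherel gives $|\Phi_{it}|\le Ce^{C|t|}\|f\|_{L^2}\|g\|_{L^2}$.

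To interpolate, we take $q$ with $1/q=(1+s)/2$, that is $s=2/q-1$, and set $z=s$. We need analytic families $f_z,g_z$ with the right $L^1$/$L^2$ norms. The obstacle is that the usual Stein construction $|f|^{\dots}\operatorname{sgn} f$ destroys the Fourier support condition. I would avoid this by \emph{not} varying $f$. Instead I would dualize: the estimate $\|\omega_B^{-s/2}\hat f\|_{L^2(B)}\lesssim\|f\|_{L^q}$ is equivalent to boundedness of the linear operator $S_z f=\mathcal F^{-1}(\omega_B^{-z/2}\hat f)$ from $L^q$ into $L^2$. This operator can be defined on all of $L^q$ by using a multiplier family $N_z$ equal to $\omega_B^{-z/2}$ on $B$, for example $N_z=M_{z/2}\cdot\mathbf 1_{\text{near }B}$. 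The trouble is that $M_{1/2+it}$ is only controlled via interpolation itself.

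The cleaner route is to apply Stein interpolation to the operator $U_z:h\mapsto \mathcal F^{-1}(M_z\hat h)$ as a map from $L^1$ to $L^\infty$ at $\Re z=1$ and from $L^2$ to $L^2$ at $\Re z=0$. These bounds hold for all $h$, not only those with Fourier support in $B$, because $M_{it}\in L^\infty$ and $\mathcal F^{-1}M_{1+it}\in L^\infty$. Here the sign trick is allowed, since $h$ ranges over all of $L^{q}$. Stein's theorem with admissible growth $e^{C|t|}$ yields $\|U_s h\|_{L^{q'}}\lesssim\|h\|_{L^q}$, where $1/q=(1-s)/2+s$ indeed gives $s=2/q-1$. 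Then for $f\in\mathcal F^{-1}C_c^\infty(B)$,
\[
\int_B|\hat f|^2\omega_B^{-s}\,dx=\langle U_s f,f\rangle\le\|U_sf\|_{L^{q'}}\|f\|_{L^q}\lesssim\|f\|_{L^q}^2 ,
\]
which is exactly the $p=2$ estimate.

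I expect the main technical point to be justifying Stein interpolation for the distribution-valued family $M_z$. This requires analyticity of $z\mapsto\langle U_z h_z, g_z\rangle$ for simple functions $h_z,g_z$, which I would obtain by first mollifying (replacing $M_z$ by $M_z*\varphi_\varepsilon$ restricted in frequency, or testing against Schwartz functions) and then letting $\varepsilon\to0$. The growth $e^{C|t|}$ is within Stein's admissible class. Finally, $p<\infty$ and $p\ge2$ ensure $\theta\in[0,1]$. Together with Lemma~\ref{bestd} and the trivial decrease in $d$ for the bounded ball ($\omega_B$ is bounded), this gives Theorem~\ref{ball}.
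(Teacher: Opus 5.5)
Your proposal is correct and follows the paper's proof essentially step for step. The $p=2$ case is obtained by Stein interpolation of $h\mapsto\mathcal F^{-1}(M_z\hat h)$ from Lemma~\ref{lem:ballinterp}, between $L^2\to L^2$ at $\Real z=0$ and $L^1\to L^\infty$ at $\Real z=1$, followed by pairing $U_{2/q-1}f$ with $f$. General $p$ then follows by H\"older's inequality and Hausdorff--Young; your mollification remark makes rigorous a step the paper passes over by working with Schwartz functions.

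The one slip is the case $q=1$, $2<p<\infty$. There $q'=\infty$, so no $\theta\in[0,1]$ satisfies $p=2\theta+(1-\theta)q'$, and $d_c(q',q)$ is undefined. Instead bound $|\hat f|^p\omega_B^{-1}\le\|\hat f\|_{L^\infty}^{p-2}|\hat f|^2\omega_B^{-1}$ and combine $\|\hat f\|_{L^\infty}\le\|f\|_{L^1}$ with the Helson case, as the paper does.
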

	\begin{proof}
		By Lemma \ref{1/omega} and Proposition \ref{prop:symbol-ball} we have already seen that Helson's inequality holds
		$$\int_B\dfrac{|\hat{f}(x)|^2}{\omega_{B}(x)}dx\leq C\|f\|_{L^1}^2, \quad f\in \PW^1(B).$$ 
		Thus, for $p>2$ and $ f\in \PW^1(B)$,
		$$\int_B\dfrac{|\hat{f}(x)|^p}{\omega_{B}(x)} \, dx\leq\|\hat{f}\|_{L^\infty}^{p-2}\int_B\dfrac{|\hat{f}(x)|^2}{\omega_{B}(x)} \, dx\leq C\|f\|_{L^1}^p.$$ 
		
		Now let us turn our attention to the endpoint $p=2$, for $1<q<2$, applying Lemma \ref{lem:analytic-family-ball}. Let $\mathcal{T}_z$ be the Fourier multiplier associated with $M_z$,
		$$\mathcal{T}_z f = \mathcal{F}^{-1} (M_z \hat{f}), \qquad 0\leq \Real z\leq 1,$$
		initially defined for Schwartz functions $f$. Then
		$$\|\mathcal{T}_{it} f\|_{L^2} \leq Ce^{C|t|}\|f\|_{L^2},$$
		and 
		$$\|\mathcal{T}_{1+it} f\|_{L^\infty} \leq Ce^{C|t|}\|f\|_{L^1}.$$ 
		By interpolation we therefore conclude for $\frac{1}{q_\theta}=\frac{1+\theta}{2},$ $\theta\in (0,1)$, that $\mathcal{T}_\theta$ extends to a bounded operator
		$\mathcal{T}_\theta \colon L^{q_\theta} \to L^{q_\theta'}$. In particular, $\mathcal{T}_{\frac{2}{q} -1} \colon L^q \to L^{q'}$ is bounded.
		Therefore, for smooth $f$ with compact Fourier support in $B$,
		$$\int_B\frac{|\hat{f}(x)|^2}{\omega_B^{\frac{2}{q}-1}(x)}dx= (M_{\frac{2}{q}-1} \hat{f}, \hat{f}) = (\mathcal{T}_{\frac{2}{q} -1} f, f) \leq C\|f\|_{L^q}^2.$$ 
		
		The case when $p=q'$ is just the boundedness of the Fourier transform. To finish the proof we must consider $1<q<2$ and $2<p < q'$.
		Let $\theta\in(0,1)$ be such that $p=2\theta+q'(1-\theta)$. Then H\"{o}lder's inequality and the boundedness of the Fourier transform yield
		\begin{eqnarray}
			\int_B\dfrac{|\hat{f}(x)|^p}{\omega_B^{1+\frac{p}{q}-p}(x)} \, dx&=& 	\int_B|\hat{f}(x)|^{q'(1-\theta)}\dfrac{|\hat{f}(x)|^{2\theta}}{\omega_B^{1+\frac{p}{q}-p}(x)} \, dx  \nonumber \\
			&\leq& \left(\int_B|\hat{f}(x)|^{q'}dx\right)^{1-\theta}\left(\int_B\dfrac{|\hat{f}(x)|^{2}}{\omega_B^{\frac{2}{q}-1}(x)}dx\right)^\theta \nonumber \\
			&\leq& C\|f\|_{L^q}^{q'(1-\theta)}\|f\|_{L^q}^{2\theta}=C\|f\|_{L^q}^p, \nonumber
		\end{eqnarray}
		which finishes the proof.
	\end{proof}

	\section{Kakeya sets and the restriction conjecture}\label{Sec6}
	
	In this section, we explore boundary-weighted Fourier inequalities further for the unit ball \(B \subset\mathbb R^n\), \(n\ge2\), observing and exploiting the connection with Kakeya-type phenomena, the Kakeya conjecture, and the restriction conjecture. 
	
	\subsection{The connection with Kakeya phenomena}
	
	Recall from the previous section that
	\begin{equation*}
		\omega_B(x)\simeq (1-|x|)^{(n+1)/2},
		\qquad \frac12<|x|<1.
	\end{equation*}
	With \(c_1=1/4\) and \(c_2=2\), we consider for \(N\ge2\) the frequency-side annuli
	\begin{equation*}
		A_N=\left\{ x \in B:
		\frac{c_1}{N^2}<1-|x|<\frac{c_2}{N^2}\right\}.
	\end{equation*}
	Since $\omega_B(x)\simeq N^{-(n+1)}$ for $x \in A_N$, the $(p,q,d)$ Fourier inequality \eqref{hardy} implies that
	\begin{equation}
		\|\widehat f\|_{L^p(A_N)}
		\lesssim N^{-(n+1)d/p}\|f\|_q,
		\qquad N \geq 2, \; f\in\PW^q(B).
		\label{eq:Hardyannular}
	\end{equation}
	In fact, as long as the necessary conditions \(1\le q\le2\) and \(p\le q'\) hold, summation over $N = 2^k$ gives a converse, which we record as a lemma.
	\begin{lm}
		Assume that $1 \leq q \leq 2$ and $p \leq q'$. Then the $(p,q,d)$ Fourier inequality \eqref{hardy} holds for every $d < d_c(p,q) = 1 + \frac{p}{q} - p$ if and only if \eqref{eq:Hardyannular} holds, with a constant depending only on $d$, for every $d < d_c(p,q)$.
	\end{lm}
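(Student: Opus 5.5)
The forward implication is immediate and was observed just before the statement. If \eqref{hardy} holds for some $d$, then on $A_N$ we have $\omega_B\simeq N^{-(n+1)}$, so $\omega_B^{-d}\gtrsim N^{(n+1)d}$ there. Restricting the integral to $A_N$ therefore gives $N^{(n+1)d}\|\widehat f\|_{L^p(A_N)}^p\lesssim \|f\|_q^p$, which is \eqref{eq:Hardyannular} with the same $d$. Applying this for each $d<d_c$ proves one direction.

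For the converse, fix $d<d_c$ and choose $d'$ with $d<d'<d_c$. By hypothesis, \eqref{eq:Hardyannular} holds with exponent $d'$. We split $B$ into the inner ball $\{|x|\le 1-c_1/4\}$ and the dyadic annuli $A_{2^k}$, $k\ge1$. These annuli cover the shell $\{0<1-|x|<c_2/4\}$, since the intervals $(c_1 4^{-k},c_2 4^{-k})=(4^{-k-1},2\cdot 4^{-k})$ overlap consecutively and reach $1-|x|<1/2$ at $k=1$. Hence they cover $\{|x|>1/2\}$, and $\{|x|\le 1/2\}$ is handled as the inner part; any gap is absorbed by the bounded region.

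On the bounded inner region $\omega_B\simeq 1$, so it suffices to bound $\|\widehat f\|_{L^p(\{|x|\le 3/4\})}\lesssim\|f\|_q$. This follows from Hausdorff--Young, using $q\le 2$, $p\le q'$ and the fact that the region has finite measure (Hölder). On $A_{2^k}$ we have $\omega_B^{-d}\lesssim 2^{k(n+1)d}$, so the $d'$ estimate gives
\[
\int_{A_{2^k}}\frac{|\widehat f|^p}{\omega_B^{d}}\,dx\lesssim 2^{k(n+1)d}\,2^{-k(n+1)d'}\|f\|_q^p=2^{-k(n+1)(d'-d)}\|f\|_q^p .
\]
This is a geometric series in $k$, which sums to $C_{d,d'}\|f\|_q^p$. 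Because the annuli have bounded overlap, summing the pieces yields \eqref{hardy} for $d$. If $d\le 0$ the weight is bounded and the claim reduces to Hausdorff--Young directly, though the argument above covers that case as well.

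The only genuine point is the \emph{loss of $\varepsilon$} in the exponent. Summation over dyadic scales would diverge at a single fixed $d$, which is why the equivalence is stated for all $d<d_c$ simultaneously. Passing to a slightly larger $d'<d_c$ supplies the geometric decay, and this is where the openness of the range $d<d_c$ is used. A smaller remaining check is that the annuli with $c_1=1/4$, $c_2=2$ at $N=2^k$ really cover a boundary neighbourhood with uniformly bounded overlap, so that the comparison $\omega_B\simeq N^{-(n+1)}$ is uniform.
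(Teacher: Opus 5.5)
Your proposal is correct and follows the paper's proof. The forward direction is restriction of the integral to $A_N$. The converse sums the annular estimate at a slightly larger exponent $d<d'<d_c$ over $N=2^k$, with Hausdorff--Young plus H\"older (using $p\le q'$) controlling the interior. The only blemishes are cosmetic: the inner radius is stated inconsistently ($1-c_1/4$, $1/2$, $3/4$), which is harmless since $\omega_B\simeq 1$ on any ball of radius $<1$, and bounded overlap of the annuli is not needed for the upper bound, only that they cover $\{1/2<|x|<1\}$.
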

	\begin{proof}
		We have seen one direction already. In the converse direction, assume that \eqref{eq:Hardyannular} holds for some $d$. Summing over $N = 2^k$ then gives the $(p, q, d')$ Fourier inequality for every $d' < d$; the boundedness of the Fourier transform is used to control the interior part. 
	\end{proof}
	
	Kakeya-type phenomena arise from overlapping thin tubes pointing in many directions on the space side; by Fourier duality, such collections can be encoded by functions with Fourier support localized to small frequency caps with separated directions. We will begin by describing how such configurations intertwine with boundary-weighted Fourier inequalities for $B$.
	
	Fix a nonzero $\phi \in S(\mathbb R^n)$ with Fourier transform supported in a small neighborhood of the origin.  Given $N \geq 2$ and a set
	$V\subset S^{n-1}$ of directions, pairwise separated by $N^{-1}$, pick corresponding rotations
	\(R_v\in SO(n)\) with \(R_ve_1=v\), and let
	\[
	z_{v,N}=\left(1-\frac{3}{2N^2}\right)v,
	\qquad
	Q_{v,N}= z_{v,N}+R_vS_N^{-1}\operatorname{supp}\widehat\phi,
	\]
	where $S_N$ is the diagonal matrix 	$S_N=\operatorname{diag}(N^2,N,\ldots,N)$.
	If the fixed support of \(\widehat\phi\) is chosen sufficiently small (independently of $N$ and $V$), the frequency caps $Q_{v,N}$ lie in $A_N$ and are pairwise disjoint.
	
	\begin{lm}
		\label{lem:capinequality}
		Let $1\le q <2$, $d\in\mathbb R$, and assume that the $(q, q, d)$ Fourier inequality holds for $B$.
		Given \(N\ge2\), an
		\(N^{-1}\)-separated set \(V\subset S^{n-1}\), arbitrary points
		\(a_v\in\mathbb R^n\), and \(L\ge1\), define
		\begin{equation*}
			\varphi_{v,a_v}(y)
			= J_N^{-1} e^{2\pi i \langle y, z_{v,N}\rangle}
			\phi\bigl(S_N^{-1}R_v^{-1}(y-a_v)\bigr), \qquad v \in V,
		\end{equation*}
		and
		\[
		K_L=\bigcup_{v\in V}
		\left(a_v+R_vS_N[-L/2,L/2]^n\right),
		\]
		where $J_N = \det S_N = N^{n+1}$.
		Then, for every \(M>0\),
		\begin{equation} \label{eq:maincapconcl}
			J_N^{d-1} \# V
			\lesssim
			|K_L|^{1-q/2}\bigl(J_N^{-1} \# V \bigr)^{q/2}
			+C_ML^{-M}J_N^{1-q} \# V,
		\end{equation}
		where $C_M$ depends only on $M$ and $\phi$, and the remaining implied constant is independent of $M, N,V,(a_v),$ and $L$.
	\end{lm}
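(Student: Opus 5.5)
Apply the $(q,q,d)$ inequality to randomized sums $f_\epsilon=\sum_{v\in V}\epsilon_v\varphi_{v,a_v}$, $\epsilon\in\{-1,1\}^V$, average over signs, and bound the resulting square function by Hölder, splitting $\mathbb R^n$ into $K_L$ and its complement.

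\textbf{Step 1 (Fourier side).} By construction $\widehat{\varphi}_{v,a_v}$ is supported in $Q_{v,N}\subset A_N$, and the caps are pairwise disjoint. Hence
\[
\int_B |\widehat{f_\epsilon}|^q\omega_B^{-d}=\sum_{v\in V}\int_{Q_{v,N}}|\widehat{\varphi}_{v,a_v}|^q\omega_B^{-d}
\]
for every choice of signs. A change of variables gives $|\widehat{\varphi}_{v,a_v}(x)|=|\widehat\phi(S_N R_v^{-1}(x-z_{v,N}))|$. On $A_N$ we have $\omega_B\simeq N^{-(n+1)}=J_N^{-1}$, and $|Q_{v,N}|\approx J_N^{-1}$. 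So each term is $\approx J_N^{-1}J_N^{d}$, and the left side is $\approx J_N^{d-1}\#V$; this is the left side of \eqref{eq:maincapconcl}.

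\textbf{Step 2 (space side).} The inequality gives $J_N^{d-1}\#V\lesssim \|f_\epsilon\|_q^q$ for every $\epsilon$. Averaging over $\epsilon$ and applying Khintchine pointwise, as in the proof of Theorem~\ref{plessqd=dc}, gives
\[
J_N^{d-1}\#V\lesssim \int_{\mathbb R^n}G^{q/2},\qquad G=\sum_{v\in V}|\varphi_{v,a_v}|^2 .
\]
Split the integral over $K_L$ and $K_L^c$.

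\emph{On $K_L$:} Hölder with exponents $2/q$ and $2/(2-q)$ gives $\int_{K_L}G^{q/2}\le |K_L|^{1-q/2}\bigl(\int G\bigr)^{q/2}$. Moreover
\[
\int G=\sum_{v\in V}\|\varphi_{v,a_v}\|_2^2=\#V\,J_N^{-2}J_N\|\phi\|_2^2\approx J_N^{-1}\#V .
\]
This is the first term of \eqref{eq:maincapconcl}.

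\emph{On $K_L^c$:} use $q/2<1$ to get $G^{q/2}\le\sum_v|\varphi_{v,a_v}|^q$. For each $v$, the point $y\notin K_L$ lies outside the tube $a_v+R_vS_N[-L/2,L/2]^n$. Substituting $y=a_v+R_vS_Nu$, this means $u\notin[-L/2,L/2]^n$, and the Jacobian is $J_N$. Hence
\[
\int_{K_L^c}|\varphi_{v,a_v}|^q\le J_N^{-q}J_N\int_{|u|_\infty>L/2}|\phi(u)|^q\,du\le C_M L^{-M}J_N^{1-q},
\]
by the Schwartz decay of $\phi$. Summing over $v$ gives $C_ML^{-M}J_N^{1-q}\#V$.

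\textbf{Main difficulty.} There is no real obstacle; the only points needing care are the support facts. One must check that $Q_{v,N}\subset A_N$ with $\omega_B\simeq J_N^{-1}$ uniformly there, and that the caps are disjoint, so that the Fourier-side integral splits exactly and is independent of $\epsilon$. The setup before the lemma asserts both facts, so they can be taken as given. The remaining work is bookkeeping of the powers of $J_N$.
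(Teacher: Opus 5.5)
Your proposal is correct and matches the paper's proof step for step. You use sign-randomized sums over the disjoint caps to get $\approx J_N^{d-1}\#V$ on the Fourier side, then Khintchine pointwise to reduce to the square function. You then split into $K_L$, handled by H\"older with $\sum_v\|\varphi_{v,a_v}\|_2^2\approx J_N^{-1}\#V$, and $K_L^c$, handled by subadditivity of $t\mapsto t^{q/2}$ and Schwartz decay after substituting $y=a_v+R_vS_Nu$.

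One small precision: the lower bound in Step 1 rests on the exact identity $\|\widehat{\varphi}_{v,a_v}\|_q^q=J_N^{-1}\|\widehat\phi\|_q^q$ (with $\widehat\phi\neq0$), not on $|Q_{v,N}|\approx J_N^{-1}$. Your change of variables already gives you that identity.
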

	
	\begin{proof}
		We have that
		\[
		\widehat{\varphi}_{v,a_v}(x)
		= e^{-2\pi i \langle x- z_{v,N}, a_v\rangle} \widehat\phi\bigl(S_NR_v^{-1}(x- z_{v,N})\bigr),
		\]
		and, consequently,
		\begin{equation*}
			\operatorname{supp}\widehat{\varphi}_{v,a_v}\subset Q_{v,N},
			\qquad
			\|\widehat{\varphi}_{v,a_v}\|_q^q\simeq J_N^{-1},
			\qquad
			\|\varphi_{v,a_v}\|_2^2\simeq J_N^{-1}.
		\end{equation*}
		For signs $\epsilon = (\epsilon_v)_{v \in V} \in \{-1,1\}^V$, let
		\(g_\epsilon=\sum_{v\in V}\epsilon_v\varphi_{v,a_v}\).  Then, since the caps $Q_{v,N}$ are disjoint,
		\[
		J_N^{d-1} \# V \approx \int_B|\widehat g_\epsilon(x)|^q \omega_B(x)^{-d}\,d x \lesssim \|g_\epsilon\|_{L^q}^q,
		\]
		where we have also applied the assumed $(q, q, d)$ Fourier inequality. Khintchine's inequality, applied pointwise and integrated, therefore gives us that
		\begin{equation*}
			J_N^{d-1} \#V
			\lesssim
			\int_{\mathbb R^n}
			\left(\sum_{v\in V}|\varphi_{v,a_v}(x)|^2\right)^{q/2}\,dx.
		\end{equation*}
		On \(K_L\), H\"older's inequality yields
		\begin{align*}
			\int_{K_L}
			\left(\sum_{v\in V}|\varphi_{v,a_v}(x)|^2\right)^{q/2}\,dx
			&\le |K_L|^{1-q/2}
			\left(\sum_{v\in V}\|\varphi_{v,a_v}\|_2^2\right)^{q/2}\\
			&\approx |K_L|^{1-q/2}
			\bigl(J_N^{-1} \# V \bigr)^{q/2}.
		\end{align*}
		On \(K_L^c\), we simply apply the subadditivity of the \(q/2\)-power and, after a change of variable, the rapid decay
		of \(\phi\),
		\begin{align*}
			\int_{K_L^c}
			\left(\sum_{v\in V}|\varphi_{v,a_v}(x)|^2\right)^{q/2}\,dx
			&\le \sum_{v\in V}\int_{K_L^c}|\varphi_{v,a_v}(x)|^q\,dx\\
			&\lesssim_M L^{-M} J_N^{1-q} \# V.
		\end{align*}
		Collecting these bounds proves the lemma.
	\end{proof}
	By a width-one \textit{tube} with centre $b_v \in \R^n$ and direction $v \in S^{n-1}$, we shall mean a set of the form 
	\begin{equation} \label{eq:tubedef}
		T_v=b_v+R_v\bigl([-N/2,N/2]\times[-1/2,1/2]^{n-1}\bigr).
	\end{equation}
	Under the assumption of a valid $(q,q,d)$-inequality, Lemma~\ref{lem:capinequality} forces the following lower estimate for the size of unions of such tubes.
	\begin{lm}
		\label{lem:tubeinequality}
		Let \(1\le q<2\), and assume that the $(q,q,d)$ Fourier inequality holds for some $d \leq d_c(q,q) = 2 - q$.
		Given an \(N^{-1}\)-separated set \(V\subset S^{n-1}\), $N \geq 2$, with corresponding tubes \eqref{eq:tubedef}, let $U = \bigcup_{v \in V} T_v$. If $d < 2 - q$, then, for every $\epsilon > 0$, it must hold that
		\begin{equation} 		 	\label{eq:subcriticaltubeunion}
			|U|\gtrsim_{\epsilon}
			N^{1- \frac{(n+1)(2-q - d)}{1 - q/2} -\epsilon} \# V.
		\end{equation}
		If instead $d = 2-q$, then it must be that 
		\begin{equation}		 	\label{eq:criticaltubekakeya}
			|U|\gtrsim N \#V.
		\end{equation}
		The implied constants are uniform in $N,V,$ $(b_v)$, and the choice of rotations \((R_v)\).
	\end{lm}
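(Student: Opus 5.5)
The plan is to apply Lemma~\ref{lem:capinequality} with the points $a_v$ chosen so that the set $K_L$ is a dilate of the tube union $U$. Then $|K_L|$ is an explicit multiple of $|U|$, and the lower bound on $|U|$ comes from rearranging \eqref{eq:maincapconcl}. Given the tubes \eqref{eq:tubedef}, with centres $b_v$ and rotations $R_v$, I use the same rotations in Lemma~\ref{lem:capinequality} and set $a_v = NL\, b_v$. Since
\[
R_vS_N[-L/2,L/2]^n = NL\, R_v\bigl([-N/2,N/2]\times[-1/2,1/2]^{n-1}\bigr),
\]
we get $K_L = NL\cdot U$, and hence $|K_L| = (NL)^n|U|$. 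Substituting this and $J_N=N^{n+1}$ into \eqref{eq:maincapconcl} gives
\[
J_N^{d-1}\#V \le C_0 (NL)^{n(1-q/2)}|U|^{1-q/2}J_N^{-q/2}(\#V)^{q/2} + C_M L^{-M}J_N^{1-q}\#V .
\]
Here $C_0$ does not depend on $M,N,V,(a_v),L$, which is exactly what makes the tail term absorbable.

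\emph{Critical case $d=2-q$.} Now $J_N^{d-1}=J_N^{1-q}$. I fix, say, $M=1$ and then a constant $L$ so large that $C_M L^{-M}\le \tfrac12 c$, where $c$ is the implied constant in $J_N^{d-1}\#V\gtrsim\cdots$. The tail term is then absorbed into the left-hand side, and we are left with
\[
J_N^{1-q/2}(\#V)^{1-q/2}\lesssim N^{n(1-q/2)}|U|^{1-q/2}.
\]
Taking the power $1/(1-q/2)$ (note $q<2$) gives $|U|\gtrsim N^{n+1-n}\#V = N\#V$, which is \eqref{eq:criticaltubekakeya}.

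\emph{Subcritical case $d<2-q$.} Here $J_N^{d-1}$ is smaller than $J_N^{1-q}$ by the factor $N^{-(n+1)(2-q-d)}$, so a fixed $L$ no longer absorbs the tail. Given $\epsilon>0$, I set $\delta=\epsilon/n$ and take $L=N^{\delta}$. I then choose $M$ so large that $\delta M>(n+1)(2-q-d)$. With these choices, $C_M L^{-M}J_N^{1-q}\le \tfrac12 c J_N^{d-1}$ for all $N$ beyond a threshold depending on $\epsilon$; the finitely many smaller $N$ are covered by the trivial bound $|U|\gtrsim N\ge N^{1-\kappa}$, with $\kappa$ the exponent loss below. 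After absorption,
\[
J_N^{d-1+q/2}(\#V)^{1-q/2}\lesssim N^{n(1-q/2)}L^{n(1-q/2)}|U|^{1-q/2}.
\]
Raising to $1/(1-q/2)$ and writing $d-1+q/2=(1-q/2)-(2-q-d)$ gives
\[
|U|\gtrsim \#V\, N^{(n+1)-\frac{(n+1)(2-q-d)}{1-q/2}-n}L^{-n}.
\]
Since $L^{-n}=N^{-\epsilon}$, this is \eqref{eq:subcriticaltubeunion}.

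The only delicate step is the absorption of the Schwartz tail in the subcritical case. It is handled by letting $L$ grow like a small power of $N$ while taking $M$ large, at the cost of the $N^{-\epsilon}$ loss. Uniformity in $(b_v)$ and $(R_v)$ is inherited from Lemma~\ref{lem:capinequality}, whose constants do not depend on these data.
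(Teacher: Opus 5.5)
Your proposal is correct and follows the paper's proof essentially verbatim: you take $a_v = NLb_v$ so that $|K_L| = (NL)^n|U|$, use a constant $L$ in the critical case, and in the subcritical case take $L$ to be a small power of $N$ with $M$ large before rearranging. The one difference is cosmetic: the paper takes $L = AN^{\epsilon'}$ with a large constant $A$, which absorbs the tail uniformly for all $N \geq 2$ and removes the need for your separate small-$N$ step; as written, that step also tacitly needs $\#V \lesssim N^{n-1}$ to supply the missing factor $\#V$.
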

	
	\begin{proof}
		We apply Lemma~\ref{lem:capinequality} with $a_v=NLb_v$, so that $|K_L|=(NL)^n|U|$. If $d < 2 -q$, let $L = A N^{\epsilon'}$, for $\epsilon' > 0$ and a large constant $A$. Then the quotient of the second term in the right-hand side of \eqref{eq:maincapconcl} and the left-hand side is
		\[
		\frac{C_ML^{-M}J_N^{1-q} \# V}{J_N^{d-1} \# V} =C_M A^{-M}  N^{-M\epsilon'} N^{(n+1)(2-q - d)}.
		\]
		Therefore, choosing $M$ so large that $M\epsilon' \geq (n+1)(2-q-d)$, and then $A$ sufficiently large, we can make this quotient as small as needed, uniformly in $N$ and $V$.  Therefore \eqref{eq:maincapconcl} reads 
		\[
		J_N^{d-1} \# V
		\lesssim
		|K_L|^{1-q/2}\bigl(J_N^{-1} \# V \bigr)^{q/2},
		\]
		which upon rearrangement is 
		\[
		|U|
		\gtrsim
		A^{-n}N^{1-\frac{(n+1)(2-q - d)}{1 - q/2}-n\epsilon'}\#V.
		\]
		When $d = 2-q$, we apply the exact same argument with $\epsilon' = 0$.
	\end{proof}
	
	The reader familiar with Besicovitch/Kakeya sets recognizes that the tubes $T_v$ can be chosen in contradiction with \eqref{eq:criticaltubekakeya}.
	
	\begin{theo} \label{Hardygate}
		For every \(1\le q<2\), the $(q, q, d_c)$ boundary-weighted Fourier inequality fails for the critical exponent $d_c = 2-q$. 
	\end{theo}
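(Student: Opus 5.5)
We argue by contradiction, using Lemma~\ref{lem:tubeinequality} in its critical form. Suppose the $(q,q,2-q)$ inequality holds for $B$ for some $1\le q<2$. Then \eqref{eq:criticaltubekakeya} says that for every $N\ge2$, every $N^{-1}$-separated set $V\subset S^{n-1}$, every choice of centres $(b_v)$ and every choice of rotations $(R_v)$ with $R_ve_1=v$, the union $U=\bigcup_{v\in V}T_v$ of the width-one tubes \eqref{eq:tubedef} satisfies $|U|\gtrsim N\#V$. Since each $T_v$ has volume $N$, this says the tubes are essentially disjoint in measure. The plan is to contradict this with a Besicovitch/Kakeya configuration.

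First we construct the configuration. It suffices to work in a two-dimensional slice. Take $V$ to be a maximal $N^{-1}$-separated set of directions in a fixed arc of the great circle $S^{n-1}\cap(\R^2\times\{0\})$, so $\#V\approx N$. Choose the rotations $R_v$ so that they fix the last $n-2$ coordinates. Each tube is then $T_v=b_v+(\text{planar } N\times1 \text{ rectangle})\times[-1/2,1/2]^{n-2}$, with $b_v\in\R^2\times\{0\}$. This reduces the claim to the planar statement: for large $N$ there are $\approx N$ rectangles of size $N\times1$ in $N^{-1}$-separated directions whose union has area $o(N^2)$.

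For the planar statement we use the classical Besicovitch construction, or equivalently its rescaling. By Perron-tree / Kakeya-needle constructions (as in C. Fefferman's disproof of the disc conjecture \cite{MR296602}), for every $\eta>0$ there is $\delta_0$ such that, for $\delta<\delta_0$, one can place $\delta^{-1}$ rectangles of size $1\times\delta$, in $\delta$-separated directions, with union of area $\le\eta$; quantitatively the area is $\lesssim 1/\log(1/\delta)$. Rescaling by $N=\delta^{-1}$ gives $N$ rectangles of size $N\times1$ with union of area $\lesssim N^2/\log N$. Multiplying by the unit cube in the remaining $n-2$ variables, we obtain $|U|\lesssim N\#V/\log N$.

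This contradicts $|U|\gtrsim N\#V$ as $N\to\infty$, because the implied constant in Lemma~\ref{lem:tubeinequality} is uniform in $N$, $V$, $(b_v)$ and $(R_v)$.

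The main obstacle is matching the Besicovitch construction to exactly the setup of Lemma~\ref{lem:tubeinequality}, which requires precisely $N^{-1}$-separated directions and full-length $N\times1$ tubes. The standard constructions give triangles or rectangles indexed by a uniform partition of an angular interval into $\delta$-arcs. If the resulting directions are not exactly $N^{-1}$-separated, we pass to every other one, or enlarge the separation constant by adjusting $N$ by a constant factor. Either change affects only constants, and the loss factor $1/\log N$ is unaffected.

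If one prefers not to cite the quantitative bound, it suffices to use the qualitative fact that the area of the union can be made $\le\eta N^2$ for any $\eta>0$ once $N$ is large. Letting $\eta\to0$ then gives the contradiction, which proves Theorem~\ref{Hardygate}, and hence Theorem~\ref{Hardynegate-intro}.
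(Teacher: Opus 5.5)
Your proposal is correct and is essentially the paper's proof: you assume the critical inequality, invoke \eqref{eq:criticaltubekakeya} from Lemma~\ref{lem:tubeinequality}, contradict it with a planar Besicovitch configuration of $\approx N$ tubes of size $N\times 1$ whose union has area $o(N^2)$, and pass to $n>2$ by taking products with $[-1/2,1/2]^{n-2}$. The only difference is cosmetic. The paper uses the thickenings $E_{c/N}$ of a single compact measure-zero Besicovitch set, which gives a tube in every direction for every $N$ at once; this avoids your bookkeeping with finite Perron-tree families and the quantitative $1/\log N$ bound, which, as you note, is not needed.
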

	
	\begin{proof}
		Suppose first that $n = 2$.  Let \(E\subset\mathbb R^2\) be a compact planar Besicovitch set of measure
		zero; given for example by the Perron tree construction \cite[Chapter~VIII]{MR596037}. For a small $\delta > 0$, let $E_\delta = E + B(0,\delta)$ be the thickening of $E$. For each $N$, choose an $N^{-1}$-separated set $V \subset S^1$, consisting of $N$ directions. For every $v \in V$, there is a length one segment $I_v \subset E$ with direction $v$. Thus, there is a width-one tube $T_v$ of the form \eqref{eq:tubedef} contained in the rescaled thickened set $N E_{c/N}$, where $c$ is a universal constant. 
		If the $(q, q, d_c)$-inequality were true, inequality \eqref{eq:criticaltubekakeya} of Lemma~\ref{lem:tubeinequality} would thus imply that
		\[N^2|E_{c/N}| \gtrsim N^2.
		\]
		This of course contradicts the fact that $|E_{c/N}| \to 0$, as $N \to \infty$.
		
		When $n > 2$, we run the same argument with the set $E' = E \times [-1/2,1/2]^{n-2}$ and the tubes $T_v' = T_v \times [-1/2,1/2]^{n-2}$.
	\end{proof}
	
	We can also prove that if the subcritical boundary-weighted Fourier inequalities hold, then every compact Kakeya set in $\R^n$ must have full Minkowski dimension $n$, thus verifying the Kakeya conjecture in its Minkowski form. See, for example, \cite[Chapter~5]{Mattila1995} for the relevant definitions and background material. 
	
	\begin{theo}
		Let \(1\le q<2\).  Assume that the $(q,q,d)$ boundary-weighted Fourier inequality for $B$ holds for every $d < d_c(q,q) = 2-q$, 
		\begin{equation*} 
			\int_{B}\dfrac{|\hat{f}(x)|^q}{\omega_{B}^d(x)}dx\leq C_{q, d}\|f\|_{L^q}^q,\quad f\in \PW^q(B). 
		\end{equation*}
		Then every compact Kakeya set in
		\(\mathbb R^n\) has full lower Minkowski dimension $n$.
	\end{theo}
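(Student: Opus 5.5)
We argue by contradiction through Lemma~\ref{lem:tubeinequality}. Let $E\subset\mathbb R^n$ be a compact Kakeya set, containing a unit segment $I_v$ in every direction $v\in S^{n-1}$. For $\delta=N^{-1}$ small, let $E_\delta=E+B(0,\delta)$. Then $\underline{\dim}_M E=n-\limsup_{\delta\to0}\log|E_\delta|/\log\delta$. It therefore suffices to show that for every $\eta>0$,
\[
|E_{c/N}|\gtrsim_\eta N^{-\eta}
\]
along all large $N$.

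First we set up the tube configuration. Fix $N\ge2$ and a maximal $N^{-1}$-separated set $V\subset S^{n-1}$, so $\#V\approx N^{n-1}$. For each $v\in V$ pick the segment $I_v\subset E$ of direction $v$. Rescaling by $N$, the set $NI_v+B(0,c)$ contains a width-one tube $T_v$ of the form \eqref{eq:tubedef}, for a universal $c$. Consequently $U=\bigcup_v T_v\subset NE_{c/N}$, and $|U|\le N^n|E_{c/N}|$. This step is identical to the proof of Theorem~\ref{Hardygate}.

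Next we apply the subcritical tube bound. By hypothesis, the $(q,q,d)$ inequality holds for every $d<2-q$. Lemma~\ref{lem:tubeinequality}, specifically \eqref{eq:subcriticaltubeunion}, then gives for every $\epsilon>0$
\[
N^n|E_{c/N}|\ge|U|\gtrsim_{\epsilon,d} N^{1-\frac{(n+1)(2-q-d)}{1-q/2}-\epsilon}\,N^{n-1}.
\]
Write $\kappa(d)=\frac{(n+1)(2-q-d)}{1-q/2}$. We obtain
\[
|E_{c/N}|\gtrsim N^{-\kappa(d)-\epsilon}.
\]
Given $\eta>0$, choose $d<2-q$ close enough to $2-q$ that $\kappa(d)<\eta/2$, and take $\epsilon=\eta/2$. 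This yields $|E_{c/N}|\gtrsim_\eta N^{-\eta}$ uniformly in $N$. The implied constants do not depend on $E$, the choice of $V$, or the rotations, as guaranteed by Lemma~\ref{lem:tubeinequality}.

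Finally we conclude. For $\delta\in[c/(N+1),c/N]$, monotonicity of $|E_\delta|$ in $\delta$ gives $|E_\delta|\gtrsim_\eta\delta^{\eta}$ for all small $\delta$. Hence $\liminf_{\delta\to0}\frac{\log|E_\delta|}{\log\delta}\le\eta$, so $\underline{\dim}_M E\ge n-\eta$. Since $\eta>0$ is arbitrary, $\underline{\dim}_M E=n$.

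The main technical point is the geometric rescaling step. We must make sure that a length-one segment thickened by $c/N$ and dilated by $N$ really contains a tube of length $N$ and width one in the exact normalized form \eqref{eq:tubedef}, with centre $b_v$ free and arbitrary $R_v$ with $R_ve_1=v$. The segment's length $1$ becomes $N$, and the thickening $c/N$ becomes $c$. The cross-section $[-1/2,1/2]^{n-1}$ fits inside a ball of radius $\sqrt{n-1}/2$, so $c=\sqrt{n}$ suffices. Everything else is exponent bookkeeping. The key feature is that the loss $\kappa(d)$ tends to $0$ as $d\to 2-q$, which is exactly why the full family of subcritical inequalities is needed.
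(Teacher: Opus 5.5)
Your proof is correct and follows the paper's argument: the tubes along the unit segments of $E$ lie inside $NE_{c/N}$, and the subcritical bound \eqref{eq:subcriticaltubeunion}, applied with $d$ close to $2-q$ and $\#V\approx N^{n-1}$, gives $|E_{c/N}|\gtrsim_\eta N^{-\eta}$; your explicit $c=\sqrt{n}$ and the monotonicity step in $\delta$ fill in details the paper leaves implicit. One small slip: since $|E_\delta|\gtrsim_\eta\delta^\eta$ holds for all small $\delta$, what you obtain, and what the lower dimension requires, is $\limsup_{\delta\to0}\log|E_\delta|/\log\delta\le\eta$, not merely the $\liminf$ bound you wrote.
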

	\begin{proof}
		Given a compact Kakeya set $E$, we must prove that $|E_\delta| \gtrsim_{\epsilon} \delta^{\epsilon}$ for every $\epsilon > 0$. Choose an $N^{-1}$-separated set $V \subset S^{n-1}$, consisting of $cN^{n-1}$ directions. As before, there is a length one segment $I_v \subset E$ with direction $v$, and a corresponding width-one tube $T_v$ of length $N$ contained in $N E_{c'/N}$. Applying the conclusion \eqref{eq:subcriticaltubeunion} of Lemma~\ref{lem:tubeinequality} with $d$ sufficiently close to $2-q$ then yields
		\[N^n |E_{c/N}| \gtrsim_{\epsilon} N^{n - \epsilon},\]
		as desired.
	\end{proof}

	\subsection{Consequences of the restriction conjecture}
	Let \(R(p, q)\) denote the spherical restriction estimate
	\[
	\|\widehat F|_{S^{n-1}}\|_{L^p(S^{n-1})}
	\lesssim \|F\|_{L^q(\mathbb R^n)},
	\qquad F\in\mathcal S(\mathbb R^n).
	\]
	The spherical restriction conjecture \cite{Tao2004restriction} asserts that $R(p,q)$
	holds whenever
	\begin{equation}
		1\le q<\rho_n:=\frac{2n}{n+1},
		\qquad
		1\le p \le \kappa_n(q):=\frac{(n-1)q'}{n+1}.
		\label{eq:restrictionconj}
	\end{equation}
	The critical case \(p=\kappa_n(q)\) gives the so-called
	Knapp line,
	\begin{equation*}
		\frac{n+1}{q}+\frac{n-1}{p}=n+1.
	\end{equation*}
	In this subsection, our goal is to show that the restriction conjecture implies a range of boundary-weighted Fourier inequalities for $B$. In fact, it implies a stronger version, where the functions need not have Fourier support in $B$.
	
	\begin{lm}
		\label{lem:resttohardy}
		Suppose that $1\le q\le2$, $1\le p <\infty$, $p \le q'$, and that
		$R(p,q)$ holds.  Then, for every
		$d< 2 - \rho_n = \frac{2}{n+1}$,
		\begin{equation*}
			\int_B \frac{|\widehat F(x)|^p}
			{\omega_B(x)^{d}} \,d x
			\lesssim \|F\|_{L^q}^p,
			\qquad F\in\mathcal S(\mathbb R^n).
		\end{equation*}
	\end{lm}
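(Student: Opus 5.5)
Decompose $B$ into spheres $rS^{n-1}$ by polar coordinates and apply the restriction estimate $R(p,q)$ on each sphere after rescaling. For $0<r\leq 1$, the function $F_r(y)=r^{-n}F(y/r)$ satisfies $\widehat{F_r}(\xi)=\widehat F(r\xi)$ and $\|F_r\|_{L^q}=r^{-n/q'}\|F\|_{L^q}$. Applying $R(p,q)$ to $F_r$ gives
\[
\int_{S^{n-1}}|\widehat F(r\theta)|^p\,d\sigma(\theta)\lesssim r^{-np/q'}\|F\|_{L^q}^p .
\]
This bound is useless near $r=0$ but harmless for $r\geq 1/2$. So I split $B$ into the inner ball $\{|x|<1/2\}$ and the shell $\{1/2\leq |x|<1\}$.

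On the inner ball, $\omega_B$ is bounded above and below. Hausdorff--Young with $p\le q'$ gives $\|\widehat F\|_{L^{q'}}\le\|F\|_{L^q}$, and since the region is bounded, Hölder yields $\int_{|x|<1/2}|\widehat F|^p\lesssim\|F\|_{L^q}^p$. Here I use $p\leq q'$, which is part of the hypothesis.

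On the shell, I use the earlier computation $\omega_B(x)\simeq(1-|x|)^{(n+1)/2}$. In polar coordinates,
\[
\int_{1/2\le|x|<1}\frac{|\widehat F(x)|^p}{\omega_B(x)^d}\,dx\simeq\int_{1/2}^1 (1-r)^{-\frac{(n+1)d}{2}} r^{n-1}\int_{S^{n-1}}|\widehat F(r\theta)|^p\,d\sigma(\theta)\,dr
\lesssim \|F\|_{L^q}^p\int_{1/2}^1(1-r)^{-\frac{(n+1)d}{2}}\,dr .
\]
The last integral is finite exactly when $\frac{(n+1)d}{2}<1$, that is, when $d<\frac{2}{n+1}=2-\rho_n$, which is the stated range. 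For $d\le 0$ the weight is bounded, so there is nothing more to check.

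The only points needing care are these. First, the restriction estimate for $F\in\mathcal S$ must be applied to the dilates $F_r$, which are again Schwartz, with constants uniform in $r\in[1/2,1]$; the factor $r^{-np/q'}$ is bounded there. Second, the exact comparability $\omega_B\simeq (1-|x|)^{(n+1)/2}$ must hold on the shell, including its behavior near $|x|=1/2$, which is fine since $\omega_B$ is continuous and positive there. Since $F$ is not assumed to have Fourier support in $B$, no density argument is needed. I do not expect a genuine obstacle. The main point is to notice that the spherical-slice integrability threshold $d<2/(n+1)$ is exactly $2-\rho_n$, and that it is independent of $p$ and $q$.
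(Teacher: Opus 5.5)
Your proof is correct and matches the paper's argument: you dilate the restriction estimate uniformly for $r\in[1/2,1]$, integrate in polar coordinates against $(1-r)^{-(n+1)d/2}$ on the shell, and handle $B(0,\tfrac12)$ with Hausdorff--Young and $p\le q'$. The only difference is that you write out the dilation identity $\|F_r\|_{L^q}=r^{-n/q'}\|F\|_{L^q}$, which the paper leaves implicit.
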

	\begin{remark}
		On the Knapp line $p = \kappa_n(q)$, it holds that $2 - \rho_n = d_c(p,q) = 1 + \frac{p}{q} - p$, and thus Lemma~\ref{lem:resttohardy} already gives the full range $d < d_c(p,q)$ in this case.
	\end{remark}
	
	\begin{proof}
		Dilation of the restriction estimate gives, 
		\[
		\|\widehat F(r\,\cdot)\|_{L^p(S^{n-1})}
		\lesssim \|F\|_q,
		\]
		uniformly for
		\(1/2\le r\le1\).
		Polar coordinates thus give us that
		\[
		\int_{B \setminus B(0,\frac12)} \frac{|\widehat F(x)|^p}
		{\omega_B(x)^{d}} \,d x \lesssim \|F\|_q^p
		\int_{1/2}^1(1-r)^{-\frac{n+1}{2}d} \,dr.
		\]
		The integral is finite exactly when $d<\frac{2}{n+1}$.  On $B(0,\frac{1}{2})$ we simply use the boundedness of the Fourier transform from $L^q$ to $L^{q'}$, and the fact that $p \leq q'$.
	\end{proof}
	
	We obtain the main result of this subsection by interpolation.
	\begin{theo}
		\label{thm:restrconj}
		Assume that the spherical restriction conjecture
		\eqref{eq:restrictionconj} holds.  Then, if $\rho_n\le  q< 2$ and $d<2-q$, 
		\begin{equation}
			\int_B \frac{|\widehat F(x)|^q}
			{\omega_B(x)^{d}} \,d x
			\lesssim \|F\|_{L^q}^q,
			\qquad F\in\mathcal S(\mathbb R^n).
			\label{eq:condcrit}
		\end{equation}
		In particular, the $(q, q, d)$ boundary-weighted Fourier inequality holds for $B$ for every $d < d_c(q,q) = 2-q$. 
	\end{theo}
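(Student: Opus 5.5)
The plan is to interpolate, on the space side and with a change of weights, between a diagonal restriction-type estimate from Lemma~\ref{lem:resttohardy} and the trivial Plancherel estimate at $q=2$. Since \eqref{eq:condcrit} is stated for all $F\in\mathcal S(\mathbb R^n)$, with no Fourier-support restriction, I would work with the linear operator $\mathcal R\colon F\mapsto \widehat F|_B$ acting on all of $L^q(\mathbb R^n)$. This sidesteps the unknown interpolation theory of $\PW^q(B)$, because Riesz--Thorin/Stein--Weiss interpolation of $L^q(\mathbb R^n)$ is available. The final statement for $\PW^q(B)$ then follows from Lemma~\ref{compactdense}.

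\textbf{First endpoint.} Fix $q_0<\rho_n$. The diagonal estimate $R(q_0,q_0)$ lies inside the conjectured range \eqref{eq:restrictionconj}: indeed, $q_0\le\kappa_n(q_0)$ is equivalent to $(n+1)(q_0-1)\le n-1$, that is, $q_0\le\rho_n$. Also $q_0\le 2\le q_0'$. Lemma~\ref{lem:resttohardy} therefore gives that $\mathcal R\colon L^{q_0}(\mathbb R^n)\to L^{q_0}(B,\omega_B^{-d_0})$ is bounded for every $d_0<2-\rho_n$.

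\textbf{Second endpoint and interpolation.} At $q=2$, Plancherel gives $\mathcal R\colon L^2(\mathbb R^n)\to L^2(B)$, i.e. $d_1=0$. Stein--Weiss interpolation with change of measure gives
\[
\bigl(L^{q_0}(\omega_B^{-d_0}),L^{2}(\omega_B^{0})\bigr)_\theta=L^{q}(\omega_B^{-d}),\qquad \frac1q=\frac{1-\theta}{q_0}+\frac\theta2,\qquad \frac dq=(1-\theta)\frac{d_0}{q_0}.
\]
Hence $\mathcal R\colon L^q\to L^q(B,\omega_B^{-d})$ is bounded for every $q\in(q_0,2)$ with $d$ as above.

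\textbf{Exponent bookkeeping.} The target critical line satisfies $d_c/q=(2-q)/q=2/q-1$, which is affine in $1/q$. It takes the value $0$ at $q=2$ and $2/\rho_n-1=(2-\rho_n)/\rho_n$ at $q=\rho_n$. Since the interpolated $d/q$ is also affine in $1/q$, the gap to the critical line is controlled by its value at $q_0$:
\[
\Bigl(\frac{2}{q_0}-1\Bigr)-\frac{d_0}{q_0}=\frac{(2-\rho_n-d_0)+(\rho_n-q_0)}{q_0}.
\]
This tends to $0$ as $q_0\uparrow\rho_n$ and $d_0\uparrow 2-\rho_n$.

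Now fix $q\in[\rho_n,2)$ and $d<2-q$. I would choose $q_0<\rho_n$ and $d_0<2-\rho_n$ close enough to the endpoints that the interpolated exponent $d^*$ at this $q$ satisfies $d^*\ge d$. The endpoint $q=\rho_n$ is included, since there $\theta\to0$ as $q_0\to\rho_n$. Finally, since $\omega_B$ is bounded on $B$, any $d\le d^*$ also works.

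\textbf{Main obstacle.} I expect the main difficulty to be justifying the weighted interpolation rigorously. That is, one must verify the Stein--Weiss identity for the weighted Lebesgue spaces $L^{q_0}(B,\omega_B^{-d_0})$ and $L^2(B)$, which are standard $\sigma$-finite weighted spaces, and that $\mathcal R$ is well defined and consistent on $L^{q_0}\cap L^2$. Both are classical. The rest is the exponent arithmetic above, which shows that no loss occurs in the limit.
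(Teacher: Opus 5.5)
Your proposal is correct and follows essentially the paper's argument: you apply Lemma~\ref{lem:resttohardy} to $R(q_0,q_0)$ with $q_0<\rho_n$, interpolate complexly against an unweighted endpoint, and then let $q_0\uparrow\rho_n$ and $d_0\uparrow 2-\rho_n$. The only difference is that the paper works in two steps (first reaching $q=\rho_n$ by interpolating against Hausdorff--Young at some $q_+\in(\rho_n,2)$, then interpolating that case against Plancherel), whereas you interpolate directly against Plancherel; your affine-gap bookkeeping, equivalently $d=(2-q)\,d_0/(2-q_0)$, shows that this one-step version loses nothing.
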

	
	\begin{proof}
		We first consider the case when $q = \rho_n$, $d < 2 - \rho_n$. 
		Choose \(1<q_-<\rho_n<q_+<2\).  Then $R(q_-, q_-)$ holds by assumption, since 
		\(q_-\le\kappa_n(q_-)\). Lemma~\ref{lem:resttohardy}
		therefore gives, 
		\begin{equation*}
			\int_B \frac{|\widehat F(x)|^{q_-}}
			{\omega_B(x)^{d_-}} \,d x
			\lesssim \|F\|_{L^{q_-}}^{q_-},
			\qquad F\in\mathcal S(\mathbb R^n), \; \; d_- < 2 - \rho_n.
		\end{equation*}
		On the other hand, the boundedness of the Fourier transform immediately yields
		\begin{equation*}
			\|\widehat F\|_{L^{q_+}(B)}\lesssim\|F\|_{q_+}.
		\end{equation*}
		Complex interpolation of these two inequalities now gives \eqref{eq:condcrit} for $q = \rho_n$ and 
		\[
		d =  \frac{\theta \rho_n}{q_-}d_{-},
		\]
		where $\theta$ is such that $\frac1{\rho_n}=\frac\theta{q_-}+\frac{1-\theta}{q_+}$. By keeping $q_+$ fixed and letting $q_{-} \to \rho_n$, we thus obtain \eqref{eq:condcrit} for $q = \rho_n$ and any $d < d_-$. Since $d_- < 2 - \rho_n$ was arbitrary, this in fact gives us the full range $d < 2 - \rho_n$.
		
		Finally, to obtain the full statement of the Theorem, we interpolate between the just established case and Plancherel.
	\end{proof}
	
	Zygmund proved the restriction conjecture for the circle
	\cite{Zygmund1974}, and we therefore obtain a sharp unconditional result for the disc.
	\begin{cor}
		\label{cor:zygmund}
		Unconditionally, the $(q,q,d)$ boundary-weighted Fourier inequality 
		\[
		\int_{\mathbb{D}} \frac{|\hat{f}(x)|^q}{(1-|x|)^{\frac{3}{2}d}} \, dx \lesssim \|f\|_{L^q}^q, \qquad f \in \PW^q(\mathbb{D}),
		\]
		holds for the unit disc $\mathbb{D} \subset \R^2$, when $\frac43 \leq q <2$ and $d < d_c(q,q) = 2-q$.  
	\end{cor}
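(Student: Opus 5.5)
The plan is to specialize Theorem~\ref{thm:restrconj} to $n=2$, using Zygmund's theorem as the input. Zygmund's theorem gives the restriction estimate $R(p,q)$ for the circle $S^1$ in the full conjectured range: $1\le q<\rho_2=\frac43$ and $p\le\kappa_2(q)=q'/3$. So the hypothesis of Theorem~\ref{thm:restrconj} holds unconditionally in dimension two. That theorem then yields
\[
\int_{\mathbb D}\frac{|\hat F(x)|^q}{\omega_{\mathbb D}(x)^d}\,dx\lesssim\|F\|_{L^q}^q,\qquad F\in\mathcal S(\mathbb R^2),
\]
for all $\frac43\le q<2$ and $d<2-q$.

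Only two reductions remain.

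First, we pass from Schwartz functions to the Paley--Wiener space. By Lemma~\ref{compactdense}, $\mathcal F^{-1}C_c^\infty(\mathbb D)$ is dense in $\PW^q(\mathbb D)$, and these functions are Schwartz. For general $f\in\PW^q(\mathbb D)$ we take approximants $f_k\to f$ in $L^q$. Hausdorff--Young gives $\hat f_k\to\hat f$ in $L^{q'}$, so along a subsequence $\hat f_k\to \hat f$ almost everywhere. Fatou's lemma then transfers the inequality to $f$.

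Second, we replace $\omega_{\mathbb D}$ by the explicit weight. Since $\omega_B(x)\simeq(1-|x|)^{(n+1)/2}$ near the boundary and $\omega_{\mathbb D}$ is bounded above and below on $|x|\le\frac12$, we have $\omega_{\mathbb D}^{-d}\simeq(1-|x|)^{-\frac32 d}$ on all of $\mathbb D$, for either sign of $d$. Hence the two weighted integrals are comparable, and the displayed inequality of the corollary follows.

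There is no real obstacle here; the substantive work sits in Theorem~\ref{thm:restrconj} and Zygmund's theorem. The one point to check is that Zygmund's result covers every pair $(q_-,q_-)$ used in that proof with $q_-<\frac43$. It does, because $q_-\le q_-'/3$ holds precisely when $q_-\le\frac43$.
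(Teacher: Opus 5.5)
Your proposal is correct and follows the paper's argument: the paper obtains the corollary by inserting Zygmund's theorem (restriction for $S^1$ in the full range $q<\frac43$, $p\le q'/3$) into Theorem~\ref{thm:restrconj} with $n=2$ and $\rho_2=\frac43$. The extra steps you supply are correct and are left implicit in the paper: the density/Fatou passage from Schwartz functions to $\PW^q(\mathbb{D})$, and the two-sided comparability $\omega_{\mathbb{D}}\simeq(1-|x|)^{3/2}$ on all of $\mathbb{D}$.
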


	\section{Boundary-weighted Fourier inequalities and Hankel operators}\label{Sec7}
	
	\subsection{Schatten class Hankel operators}
	Boundary-weighted Fourier inequalities originated from truncated Hankel operators $\Ha_\phi:\PW^2(\Omega)\to \PW^2(\Omega)$. For a distribution $\hat{\phi}$ in $2\Omega$, the Hankel operator is defined via the formula
	$$\widehat{\Ha}_\phi f(x)=\int_{\mathbb{R}^n}\widehat{\phi}(x+y)\hat{f}(y) \, dy \quad x\in\Omega,$$
	initially for $f \in \mathcal{F}^{-1}C_c^\infty(\Omega)$. We refer to \cite{bampouras2024besovspacesschattenclass} for further details and bibliography, but only mention that the prominent question in higher dimensions, open already in the simplest case $\Omega = \R_+^n$, is whether the conclusion of Nehari's theorem is valid: given a bounded Hankel operator $\Ha_\phi:\PW^2(\Omega)\to \PW^2(\Omega)$, does there exist $\psi \in L^\infty$ such that the restriction $\hat{\psi}|_{2\Omega}$ coincides with $\hat{\phi}$ as a distribution in $2\Omega$? In this case, we say that $\phi$ has a \textit{bounded symbol}.
	
	To illustrate the connection with boundary-weighted Fourier inequalities, consider first the Hilbert--Schmidt class of Hankel operators. Then $$\|\Ha_\phi\|_{S^2}^2 = \int_\Omega\int_\Omega |\widehat{\phi}(x+y)|^2 \,dx \, dy=2^{-n}\int_{2\Omega}|\widehat{\phi}(x)|^2\omega_{2\Omega}(x) \, dx.$$ Therefore a simple duality argument, explained in the proofs of Propositions \ref{firstprop} and \ref{prop:Nehari-implies-Hardy}, shows that Helson's inequality, the $(2,1,1)$ Fourier inequality for $\Omega$,
	$$\int_\Omega\dfrac{|\hat{f}(x)|^2}{\omega_\Omega(x)}dx\lesssim \|f\|_{L^1}^2, \quad f\in \PW^1(\Omega)$$ is equivalent to the following statement: every Hilbert-Schmidt Hankel operator on $\PW(\Omega)$ has a bounded symbol. We say that \textit{Nehari's theorem holds for $S^2(\PW^2(\Omega))$}. 
	
	Helson's inequality for the ball was demonstrated in Section~\ref{Sec5}, and accordingly we have the following reinterpretation. We refer to \cite{MR878246} for the case $n=1$.
	
	\begin{cor} \label{cor:S2Nehari}
		In the case of the unit ball $B \subset \R^n$, $n \geq 2$, Nehari's theorem holds for $S^2(\PW^2(B))$. 
	\end{cor}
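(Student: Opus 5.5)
The plan is to combine Helson's inequality for the ball with a duality argument between the weighted space $L^2(\Omega,\omega_\Omega^{-1})$ on the Fourier side and the Hilbert--Schmidt norm of the Hankel operator. By Proposition~\ref{prop:symbol-ball} and Lemma~\ref{1/omega}, the $(2,1,1)$ inequality
\[
\int_B \frac{|\hat f(x)|^2}{\omega_B(x)}\,dx \lesssim \|f\|_{L^1}^2, \qquad f\in\PW^1(B),
\]
holds. It therefore suffices to show that this inequality implies that every Hilbert--Schmidt Hankel operator on $\PW^2(B)$ has a bounded symbol. Since $\omega_{2B}(2x)=2^n\omega_B(x)$, I will rescale once so that the weight on $2B$ matches the weight appearing in the Hilbert--Schmidt formula.

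First, given $\Ha_\phi\in S^2$, the identity
\[
\|\Ha_\phi\|_{S^2}^2 = 2^{-n}\int_{2B}|\hat\phi|^2\omega_{2B}
\]
shows that $\hat\phi\in L^2(2B,\omega_{2B})$. Next, define a linear functional on the dense subspace $\mathcal F^{-1}C_c^\infty(2B)\subset\PW^1(2B)$ (dense by Lemma~\ref{compactdense}) by
\[
\ell(g)=\int_{2B}\hat g(x)\,\overline{\hat\phi(x)}\,dx .
\]
Cauchy--Schwarz gives
\[
|\ell(g)|\le \Bigl(\int_{2B}|\hat g|^2\omega_{2B}^{-1}\Bigr)^{1/2}\Bigl(\int_{2B}|\hat\phi|^2\omega_{2B}\Bigr)^{1/2}\lesssim \|g\|_{L^1}\,\|\Ha_\phi\|_{S^2}.
\]
Here I use Helson's inequality for $2B$, which follows from the one for $B$ by dilation, or equally from Proposition~\ref{prop:symbol-ball} applied to $2B$. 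So $\ell$ extends to a bounded functional on $\PW^1(2B)\subset L^1$.

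Then apply Hahn--Banach to extend $\ell$ to all of $L^1(\mathbb R^n)$, and represent the extension by some $\psi\in L^\infty$ with $\|\psi\|_\infty\lesssim\|\Ha_\phi\|_{S^2}$. Testing against $g\in\mathcal F^{-1}C_c^\infty(2B)$ and using Parseval gives $\int \hat g\,\overline{\hat\psi}=\int\hat g\,\overline{\hat\phi}$ for all such $g$. Hence $\hat\psi|_{2B}=\hat\phi$ as distributions in $2B$, so $\psi$ is a bounded symbol for $\Ha_\phi$.

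The main point needing care is matching conventions: the pairing on $2B$ versus $B$, and the harmless constant $2^{-n}$ together with the dilation of the weight. A secondary point is checking that $\hat\phi$ is a locally integrable function in $2B$, so that the pairing with $\hat g$ is well defined on test functions. This follows directly from $\hat\phi\in L^2(2B,\omega_{2B})$, since $\omega_{2B}$ is bounded below on compact subsets of $2B$.
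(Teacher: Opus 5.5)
Your proof is correct. It follows the paper's overall strategy (Helson's inequality for $B$, then duality and Hahn--Banach), but you carry out the middle step more elementarily.

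The paper obtains the corollary from Proposition~\ref{firstprop} with $p=2$. There, Helson's inequality yields the embedding $\PW^1(2B)\hookrightarrow B^{-1/2}_{2,2}(2B)$. Lemma~\ref{embednehari} then pairs this embedding against the necessity estimate \eqref{eq:besovnec}, $\|\phi\|_{B^{1/2}_{2,2}(2B)}\lesssim\|\Ha_\phi\|_{S^2}$. That estimate relies on admissibility (Theorem~\ref{thm:main}), Fourier partitions, Besov duality and the cited Schatten-class necessity theorem.

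You replace all of this with the exact Hilbert--Schmidt identity and Cauchy--Schwarz between $L^2(2B,\omega_{2B}^{-1})$ and $L^2(2B,\omega_{2B})$. At $p=2$ this is exactly what the Besov pairing reduces to, since $\|\phi\|^2_{B^{1/2}_{2,2}(2B)}\approx\int_{2B}|\hat\phi|^2\omega_{2B}$.

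What each route buys:
\begin{itemize}
\item Your route is self-contained and gives an explicit bound, $\|\psi\|_{L^\infty}\le 2^{n/2}C^{1/2}\|\Ha_\phi\|_{S^2}$, where $C$ is the Helson constant.
\item The Besov route is what extends to $p>2$ in Proposition~\ref{firstprop}. There no exact formula for the $S^{p'}$-norm exists, and only the lower bound \eqref{eq:besovnec} is available.
\end{itemize}

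One small point needs fixing. You deduce that $\hat\phi$ is a function from the Hilbert--Schmidt identity, but that identity already presupposes it, so the step is circular. Argue instead as follows: a Hilbert--Schmidt operator has a kernel in $L^2(B\times B)$, and this kernel must coincide with the distribution $\hat\phi(x+y)$. The linear change of variables $(u,v)=(x+y,y)$ then gives $\hat\phi\in L^2_{\mathrm{loc}}(2B)$.
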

	
	This result sits at the intersection of two general implications, which this subsection is devoted to. More precisely, we are going to show that when $p\geq 2$, the $(p, 1, 1)$ boundary-weighted Fourier inequality \eqref{hardy} for $\Omega$ always implies Nehari's theorem for $S^{p'}(\PW^2(\Omega))$ and, assuming a Besov space characterization, that Nehari's theorem for $S^p(\PW^2(\Omega))$ implies the corresponding $(p', 1, 1)$ Fourier inequality for $\Omega$. Here $\frac{1}{p} + \frac{1}{p'} = 1$.
	
	In order to prove these implications we will need to make a connection between the $S^p$-Nehari theorem and the Besov spaces of Paley--Wiener type which were introduced in Section~\ref{subsec:besov}. Since we proved that every set $\Omega$ is admissible in Theorem~\ref{thm:main}, the necessity result \cite[Theorem 1.4]{bampouras2024besovspacesschattenclass} implies that we always have the estimate
	\begin{equation} \label{eq:besovnec}
		\|\phi\|_{B_{p,p}^{1/p}(2\Omega)} \leq C \|\Ha_\phi\|_{S^p(\PW^2(\Omega))}, \qquad 1 \leq p \leq \infty.
	\end{equation}
	This can be dualized as follows.
	\begin{lm}\label{embednehari}
		Let $1\leq p\leq \infty$. If $$\PW^1(\Omega)\hookrightarrow B_{p',p'}^{-1/p}(\Omega),$$ then Nehari's theorem holds for $S^p(\PW^2(\Omega))$.
	\end{lm}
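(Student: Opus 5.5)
The plan is to prove the statement by duality, using the embedding hypothesis together with the Hahn--Banach theorem to produce a bounded symbol. Nehari's theorem for $S^p(\PW^2(\Omega))$ asks that every $\phi$ with $\Ha_\phi\in S^p$ coincide on $2\Omega$ with $\hat\psi$ for some $\psi\in L^\infty$. Concretely, we aim for $\|\psi\|_{L^\infty}\lesssim\|\Ha_\phi\|_{S^p}$. The natural pairing is
\[
\langle f,\phi\rangle=\int \hat f(x)\overline{\hat\phi(x)}\,dx,
\]
taken for $f\in\mathcal F^{-1}C_c^\infty(2\Omega)$, which is dense in $\PW^1(2\Omega)$ by Lemma~\ref{compactdense}. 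Suppose we can show
\[
|\langle f,\phi\rangle|\lesssim\|f\|_{L^1}\|\Ha_\phi\|_{S^p}.
\]
Then the functional $f\mapsto\langle f,\phi\rangle$ extends boundedly to $\PW^1(2\Omega)$, viewed as a closed subspace of $L^1$. Hahn--Banach extends it further to all of $L^1$, and it is then represented by some $\psi\in L^\infty$. Testing against $f\in\mathcal F^{-1}C_c^\infty(2\Omega)$ shows that $\hat\psi=\hat\phi$ as distributions on $2\Omega$.

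The pairing bound splits into three steps.

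\textbf{Step 1 (rescaling).} Rescale the hypothesis from $\Omega$ to $2\Omega$. The weight transforms as $\omega_{2\Omega}(2x)=2^n\omega_\Omega(x)$, and an admissible cover of $\Omega$ dilates to one of $2\Omega$, with $a^j$ replaced by $2^na^j$. Hence $\PW^1(2\Omega)\hookrightarrow B^{-1/p}_{p',p'}(2\Omega)$, with a constant depending only on $n$ and $p$.

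\textbf{Step 2 (Besov duality).} Apply the duality between $B^{-1/p}_{p',p'}(2\Omega)$ and $B^{1/p}_{p,p}(2\Omega)$ from \cite[Lemma 2.7]{bampouras2024besovspacesschattenclass}. Concretely, fix a partition $\{\phi_j^i\}\in\mathscr P(A_j^i)$ and write
\[
\langle f,\phi\rangle=\sum_{j,i}\langle f*\phi_j^i,\ \phi*\widetilde\phi_j^i\rangle,
\]
where $\widetilde\phi_j^i=\sum_{(k,l):A_k^l\cap A_j^i\neq\emptyset}\phi_k^l$. This identity holds because $\sum\hat\phi_j^i=1$ on $\supp\hat f$. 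By Axioms 4--5 only boundedly many terms enter each $\widetilde\phi_j^i$, and by Axiom 1 they all have comparable levels $a^k\approx a^j$. Hölder on each term with exponents $p',p$, followed by Hölder in $\ell^{p'}\times\ell^p$ with the weights $a^{-j/p}$ and $a^{j/p}$, gives
\[
|\langle f,\phi\rangle|\lesssim\|f\|_{B^{-1/p}_{p',p'}(2\Omega)}\|\phi\|_{B^{1/p}_{p,p}(2\Omega)}.
\]

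\textbf{Step 3 (necessity estimate).} Combine Step 2 with the embedding from Step 1 and the necessity estimate \eqref{eq:besovnec} to obtain
\[
|\langle f,\phi\rangle|\lesssim\|f\|_{L^1}\|\Ha_\phi\|_{S^p}.
\]

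The endpoint $p=\infty$ (so $p'=1$, $B^0_{1,1}$) and $p=1$ are handled by the same argument, reading $\ell^\infty$ norms where appropriate.

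The main obstacle is rigor around which objects the pairing and estimates apply to. The key issue is that $\phi$ is only a distribution on $2\Omega$. However, $\langle f,\phi\rangle$ only sees $\hat\phi$ near the compact set $\supp\hat f$, so the pairing is well defined. The Besov norm of $\phi$ in \eqref{eq:besovnec} is defined through the same local pieces $\phi*\phi_j^i$, so Step 2 applies to $\phi$ directly. One must also check that the embedding constant on $2\Omega$ does not depend on the particular cover. This follows from the equivalence of Besov norms across admissible covers, which is part of the construction in \cite{bampouras2024besovspacesschattenclass}.
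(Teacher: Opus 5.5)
Your proposal is correct and follows the paper's proof essentially step for step. You bound $|\langle f,\phi\rangle|$ via Besov duality, the necessity estimate \eqref{eq:besovnec}, and the embedding rescaled to $2\Omega$, and then use density and Hahn--Banach to produce $\psi\in L^\infty$ with $\hat\psi=\hat\phi$ on $2\Omega$; the only difference is that you write out the duality inequality and the rescaling explicitly, where the paper simply cites \cite[Lemma 2.7]{bampouras2024besovspacesschattenclass}.
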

	\begin{proof}
		Suppose that $\Ha_\phi\in S^p$ and that $f \in \mathcal{F}^{-1} C_c^\infty(2\Omega)$. Then, by duality of Besov spaces \cite[Lemma 2.7]{bampouras2024besovspacesschattenclass}, \eqref{eq:besovnec}, and the hypothesis (rescaled to $2\Omega$), we have that
		$$|\langle \phi,f\rangle|\lesssim \|\phi\|_{B_{p,p}^{1/p}(2\Omega)}  \|f\|_{B_{p',p'}^{-1/p}(2\Omega)}\lesssim \|\Ha_\phi\|_{S^p}\|f\|_{L^1}.$$
		Thus by density, $\phi$ generates a bounded functional on $\PW^1(2\Omega) \subset L^1$. By Hahn-Banach there is $\psi\in L^\infty$ such that for $f\in \PW^1(2\Omega)$, $\langle f,\psi\rangle  = \langle f,\phi\rangle$. Therefore $\hat{\phi}=\hat{\psi}$ in $2\Omega$, hence $\Ha_\phi=\Ha_\psi$.
	\end{proof}
	In particular, Lemma \ref{embednehari} clearly applies when $p = 1$. Indeed, $\PW^1(\Omega)\hookrightarrow B_{\infty,\infty}^{-1}(\Omega)$, since 
	$$a^{-j}\|f\ast\phi_j^i\|_{L^\infty}\leq a^{-j}\|f\|_{L^1}\|\phi_j^i\|_{L^\infty} \lesssim \|f\|_{L^1},$$
	by Lemma~\ref{estim}.
	
	Now, let us prove the first result of this subsection.
	
	\begin{prop}\label{firstprop}
		Let $2 \leq p < \infty$. If the $(p,1,1)$ boundary-weighted Fourier inequality holds,
		$$\int_\Omega\dfrac{|\hat{f}(x)|^{p}}{\omega_\Omega(x)}dx\lesssim \|f\|_{L^1}^{p},\quad f\in\PW^1(\Omega),$$ 
		then Nehari's theorem holds for $S^{p'}(\PW^2(\Omega))$ for $\Omega$.
	\end{prop}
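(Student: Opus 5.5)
The plan is to reduce to Lemma~\ref{embednehari} with the Schatten exponent $p'$ in place of $p$. That lemma, applied to $S^{p'}(\PW^2(\Omega))$, asks for the embedding
$$\PW^1(\Omega)\hookrightarrow B^{-1/p'}_{p,p}(\Omega),$$
since the dual index of $p'$ is $p$. We therefore need to show that
$$\sum_{j,i} a^{-jp/p'}\|f\ast\phi_j^i\|_{L^p}^p\lesssim \|f\|_{L^1}^p, \qquad f\in\PW^1(\Omega),$$
using the $(p,1,1)$ inequality. By Lemma~\ref{compactdense} it suffices to take $f\in\mathcal F^{-1}C_c^\infty(\Omega)$.

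The natural route goes through the Fourier side. For $p\ge 2$, Hausdorff--Young gives $\|f\ast\phi_j^i\|_{L^p}\le \|\hat f\,\hat\phi_j^i\|_{L^{p'}}$, since $p'\le 2$. The term $\hat f\hat\phi_j^i$ is supported in $A_j^i$, which has measure $\approx a^j$, and $\hat\phi_j^i$ is uniformly bounded (because $\|\phi_j^i\|_{L^1}$ is). H\"older's inequality from $L^{p}$ to $L^{p'}$ on $A_j^i$ then yields
$$\|\hat f\hat\phi_j^i\|_{L^{p'}}\lesssim m(A_j^i)^{1/p'-1/p}\|\hat f\|_{L^p(A_j^i)}\approx a^{j(1/p'-1/p)}\|\hat f\|_{L^p(A_j^i)}.$$
Raising to the power $p$ and multiplying by $a^{-jp/p'}$, the exponent of $a^j$ becomes $p/p'-1-p/p'=-1$. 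This gives
$$a^{-jp/p'}\|f\ast\phi_j^i\|_{L^p}^p\lesssim a^{-j}\int_{A_j^i}|\hat f|^p\approx\int_{A_j^i}\frac{|\hat f(x)|^p}{\omega_\Omega(x)}\,dx,$$
where the last step uses \ref{Axiom1}, which says $\omega_\Omega\approx a^j$ on $A_j^i$. Summing over $(j,i)$ and using the bounded overlap of the cover (\ref{Axiom5}, or equivalently the argument in the proof of Theorem~\ref{plessqd=dc}), the left side is controlled by $\int_\Omega|\hat f|^p\omega_\Omega^{-1}$. The $(p,1,1)$ hypothesis bounds this by $\|f\|_{L^1}^p$.

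Once the embedding is established, Lemma~\ref{embednehari} finishes the proof. Rescaling from $\Omega$ to $2\Omega$ is harmless, since $\omega_{2\Omega}(2x)=2^n\omega_\Omega(x)$ and the hypothesis is dilation invariant.

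The only delicate point is the bounded overlap of the $A_j^i$, that is, that $\sum_{j,i}\chi_{A_j^i}\lesssim 1$. It should follow from Axioms 1 and 5 together with the disjointness of the Macbeath regions $M(x_j^i,\eta)$ in the construction: overlapping sets have comparable levels $j$, and within a level a volume count bounds the number of overlaps. If needed, this count can be re-derived directly from Lemma~\ref{Macbeath}. Note that $p\ge 2$ is used exactly where Hausdorff--Young is applied.
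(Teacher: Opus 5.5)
Your proposal is correct and follows essentially the same route as the paper's proof: Hausdorff--Young for $p\ge 2$, H\"older on each $A_j^i$ at the cost $a^{j(1/p'-1/p)}$, Axiom~1 and bounded overlap to reach $\int_\Omega|\hat f|^p\omega_\Omega^{-1}$, then the $(p,1,1)$ hypothesis and Lemma~\ref{embednehari} with $p'$ in place of $p$. The bounded overlap you flag as delicate is in fact immediate from Axiom~5 with $(k,l)=(j,i)$: since $\frac12(A_j^i+A_j^i)=A_j^i$, every $A_\beta^\gamma$ meeting $A_j^i$ lies in a uniformly bounded set $J_{j,i,j,i}$.
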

	\begin{proof}
		We may assume that the Fourier inequality holds for $2\Omega$, since it is unaffected by affine bijections. Let $\{\phi_j^i\}\in\mathscr{P}(2A_j^i)$,  where $\{A_j^i\}$ is an admissible cover for $\Omega$. For $f\in \mathcal{F}^{-1} C_c^\infty(2\Omega)$, using the boundedness of the Fourier transform, H\"{o}lder's inequality in the integral, and \ref{Axiom5}, we can estimate
		\begin{eqnarray} \|f\|_{B_{p,p}^{-1/p'}(2\Omega)}^p&=&\sum_{j,i}a^{-jp/p'}\|f\ast\phi_j^i\|_{L^p}^p\leq \sum_{j,i}a^{-jp/p'}\left(\int_{2A_j^i}|\hat{f}(x)\widehat{\phi}_j^i(x)|^{p'}dx\right)^{\frac{p}{p'}} \nonumber \\
			&\lesssim& \sum_{j,i}a^{-jp/p'} a^{jp(\frac{1}{p'}-\frac{1}{p})}\int_{2A_j^i}|\hat{f}(x)\widehat{\phi}_j^i(x)|^{p}dx \nonumber \\ &=& \sum_{j,i}  a^{-j}\int_{2A_j^i}|\hat{f}(x)\widehat{\phi}_j^i(x)|^{p}dx \lesssim \int_{2\Omega}\frac{|\hat{f}(x)|^{p}}{\omega_{2\Omega}(x)}dx\lesssim \|f\|_{L^1}^p. \nonumber
		\end{eqnarray}
		The result follows by Lemma \ref{embednehari}.
	\end{proof} 
	
	If we had access also to the reverse inequality of \eqref{eq:besovnec}, we would obtain that the class of symbols generating Schatten class Hankel operators $\Ha_\phi : \PW^2(\Omega) \to \PW^2(\Omega)$ coincides exactly with $B_{p,p}^{1/p}(2\Omega)$,
	\begin{equation} \label{eq:besovsuf}
		\|\phi\|_{B_{p,p}^{1/p}(2\Omega)} \approx \|\Ha_\phi\|_{S^p(\PW^2(\Omega))}.
	\end{equation}
	By admissibility of $\Omega$, Theorem~\ref{thm:main}, and \cite[Theorem~1.3]{bampouras2024besovspacesschattenclass}, we know that \eqref{eq:besovsuf} is always holds true for $1 \leq p \leq 2$. Furthermore, we know that \eqref{eq:besovsuf} holds for all $1 \leq p < \infty$ when $\Omega = P$ is a simple polytope, and for all $1 \leq p < 2 \frac{n+1}{n-1}$ when $\Omega = B \subset \R^n$ is the unit ball, see \cite{bampouras2024besovspacesschattenclass}. We expect that \eqref{eq:besovsuf} holds for all $1 \leq p < \infty$ for arbitrary convex sets $\Omega$.
	
	\begin{prop}\label{prop:Nehari-implies-Hardy}
		Let $2\leq p< \infty$. Assume that 
		$$\|\Ha_\phi\|_{S^p}\lesssim \|\phi\|_{B_{p,p}^{1/p}(2\Omega)}$$
		holds. Then Nehari's theorem for $S^p(\PW^2(\Omega))$ implies the $(p', 1, 1)$ boundary-weighted Fourier inequality for $\Omega$,
		$$\int_\Omega\dfrac{|\hat{f}(x)|^{p'}}{\omega_\Omega(x)}dx\lesssim \|f\|_{L^1}^{p'},\quad f\in\PW^1(\Omega).$$
	\end{prop}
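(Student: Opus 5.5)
The plan is to dualize the argument of Proposition~\ref{firstprop}. Nehari's theorem for $S^p(\PW^2(\Omega))$ is combined with the assumed sufficiency estimate $\|\Ha_\phi\|_{S^p}\lesssim\|\phi\|_{B^{1/p}_{p,p}(2\Omega)}$. Together they show that every $\phi\in B^{1/p}_{p,p}(2\Omega)$ coincides on $2\Omega$ with some $\psi\in L^\infty$. We first upgrade this to a norm bound, $\inf\{\|\psi\|_{L^\infty}:\hat\psi|_{2\Omega}=\hat\phi\}\lesssim\|\phi\|_{B^{1/p}_{p,p}(2\Omega)}$. The map $\phi\mapsto[\psi]$ into the quotient $L^\infty/\{\psi:\hat\psi|_{2\Omega}=0\}$ is well defined and linear. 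Its graph is closed, because both norms dominate distributional convergence tested against $\mathcal F^{-1}C_c^\infty(2\Omega)$. The closed graph theorem then gives the bound. Consequently, for $f\in\mathcal F^{-1}C_c^\infty(2\Omega)$,
\[
|\langle f,\phi\rangle|=|\langle f,\psi\rangle|\lesssim \|f\|_{L^1}\|\phi\|_{B^{1/p}_{p,p}(2\Omega)}.
\]
By the duality of Besov spaces \cite[Lemma 2.7]{bampouras2024besovspacesschattenclass} (taking the supremum over test $\phi$), this yields the embedding $\PW^1(2\Omega)\hookrightarrow B^{-1/p}_{p',p'}(2\Omega)$. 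After an affine change of variables, the same holds with $\Omega$ in place of $2\Omega$.

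It remains to pass from the embedding $\|f\|_{B^{-1/p}_{p',p'}(\Omega)}\lesssim\|f\|_{L^1}$ to the weighted integral. Now $p'\le2$, so Hausdorff--Young goes the wrong way, and the H\"older step of Proposition~\ref{firstprop} cannot be reversed at the level of a single block. Instead we aim to bound each piece directly,
\[
\int_{A_j^i}|\hat f|^{p'}\,dx\lesssim a^{j(1-p'/p)}\|f\ast\tilde\phi_j^i\|_{L^{p'}}^{p'}+(\text{controllable error}).
\]
We expect this to follow from the fact that $\hat f$ restricted to a parallelepiped of volume $a^j$ is the Fourier transform of $f\ast\phi_j^i$, whose $L^{p'}$ norm we control. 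The function $f\ast\phi_j^i$ is frequency localized to $A_j^i$. Rescaling by $T_{j,i}$ to the unit cube, we use that for fixed frequency support in a cube, $\|\hat g\|_{L^{p'}}\le\|\hat g\|_{L^\infty}|\text{cube}|^{1/p'}$ and $\|\hat g\|_{L^\infty}\le\|g\|_{L^1}$. We therefore want an $L^1$ bound rather than an $L^{p'}$ bound.

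This is the main obstacle. To get $L^1$ control, we use the embedding only at the $\ell^{p'}$ level and supply the $L^1$ control per block from the trivial estimate $\|f\ast\phi_j^i\|_{L^1}\lesssim\|f\|_{L^1}$. Concretely, write $|\hat f|^{p'}=|\hat f|^{p'-1}|\hat f|$ on each block, or interpolate between $\|\hat f\|_{L^\infty}\le\|f\|_{L^1}$ and the block $L^{2}$ bounds.

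If this fails, the fallback is a more robust route that avoids pointwise reversal. Apply the embedding to the modulated family $f_\epsilon=f\ast\sum\epsilon_{j,i}\phi_j^i$ with random signs, and average. The Fourier-side weighted integral is sign-independent, and Khintchine gives square functions as in Theorem~\ref{plessqd=dc}. Alternatively, test the Nehari bound directly against symbols $\hat\phi=\sum_{j,i}c_{j,i}\,a^{-j}\overline{\hat f}|\hat f|^{p'-2}\widehat{\phi}_j^i$ chosen so that $\langle f,\phi\rangle\approx\int|\hat f|^{p'}\omega^{-1}$. One then estimates $\|\phi\|_{B^{1/p}_{p,p}}$ by Hausdorff--Young in the correct direction: $p\ge2$ on the $\phi$ side. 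This last route seems most promising. With it, the Besov norm of $\phi$ is bounded by $\big(\int|\hat f|^{p'}\omega^{-1}\big)^{1/p}$, using Axioms~1, 2 and~5. Dividing then gives $\int|\hat f|^{p'}\omega^{-1}\lesssim\|f\|_{L^1}^{p'}$, first for $\mathcal F^{-1}C_c^\infty$ functions and then, by Lemma~\ref{compactdense}, in general.
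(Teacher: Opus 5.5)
Your final route is essentially the paper's proof. It has the same four steps: a closed-graph bound $\|\psi\|_{L^\infty}\lesssim\|\phi\|_{B^{1/p}_{p,p}(2\Omega)}$; the test symbol $\hat\phi\approx\overline{\hat f}\,|\hat f|^{p'-2}\omega_{2\Omega}^{-1}$; Hausdorff--Young plus H\"older on each block to get $\|\phi\|_{B^{1/p}_{p,p}}^p\lesssim\int_{2\Omega}|\hat f|^{p'}\omega_{2\Omega}^{-1}\,dx$; and pairing with $f$ and dividing (the paper takes $\hat\phi=|\hat f|^{p'-2}\hat f/\omega_{2\Omega}$ outright, which spares you needing $\widehat{\phi}_j^i\geq0$ in your block sum). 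Separately, the ``main obstacle'' in your first route is illusory and needs no $L^1$ detour: on a block of volume $\approx a^j$, H\"older, Plancherel and Bernstein (as in the proof of Theorem~\ref{embed}) give $\int|\hat f\,\widehat{\phi}_j^i|^{p'}\,dx\lesssim a^{j(2-p')}\|f\ast\phi_j^i\|_{L^{p'}}^{p'}$, so the embedding $\PW^1\hookrightarrow B^{-1/p}_{p',p'}$ you derived already yields the $(p',1,1)$ inequality (this is the dual form of the paper's Besov estimate), whereas splitting $|\hat f|^{p'}=|\hat f|^{p'-1}|\hat f|$ with $|\hat f|\leq\|f\|_{L^1}$ would reduce matters to the $(1,1,1)$ inequality, which fails for the ball.
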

	\begin{proof}
		Let $\{\phi_j^i\}\in\mathscr{P}(2A_j^i),$ where $\{A_j^i\}$ is an admissible cover of $\Omega$. As in the proof of Proposition~\ref{firstprop}, we can bound
		\begin{eqnarray} \|\phi\|_{B_{p,p}^{1/p}(2\Omega)}^p&=&\sum_{j,i}a^j\|\phi\ast\phi_j^i\|_{L^p}^p\leq \sum_{j,i}a^j\left(\int_{2A_j^i}|\widehat{\phi}(x)\widehat{\phi}_j^i(x)|^{p'}dx\right)^{\frac{p}{p'}} \nonumber \\
			&\lesssim& \sum_{j,i}a^j a^{jp(\frac{1}{p'}-\frac{1}{p})}\int_{2A_j^i}|\widehat{\phi}(x)\widehat{\phi}_j^i(x)|^{p}dx \lesssim\int_{2\Omega}|\widehat{\phi}(x)|^{p}\omega_{2\Omega}^{p-1}(x)dx. \nonumber
		\end{eqnarray}
		By an argument with the closed graph theorem, we can restate the assumption that Nehari's theorem holds for $S^p$ as follows: for every Hankel operator $\Ha_\phi\in S^p(\PW(\Omega))$ there is a bounded function $\psi\in L^\infty(\mathbb{R}^n)$ such that $\widehat{\phi}=\widehat{\psi}$ in $2\Omega$ and  $\|\psi\|_{L^\infty}\leq C\|\Ha_\phi\|_{S^p}$. Therefore this symbol $\psi = \psi_{\phi}$ satisfies
		$$\|\psi\|_{L^\infty}\lesssim \left(\int_{2\Omega}|\widehat{\phi}(x)|^{p}\omega_{2\Omega}^{p-1}(x)dx\right)^{1/p}.$$
		Given $f\in\mathcal{F}^{-1} C_c^\infty(2\Omega)$, consider the symbol $\phi \in L^\infty$ given by 
		$$\widehat{\phi}(x)=\dfrac{|\hat{f}(x)|^{p'-2}\hat{f}(x)}{\omega_{2\Omega}(x)} \chi_{ \{x \in 2\Omega  \, : \, |\hat{f}(x)| > 0\}}.$$ Then we have that
		\[\left(\int_{2\Omega}|\widehat{\phi}(x)|^{p}\omega_{2\Omega}^{p-1}(x)dx\right)^{1/p} = \left(\int_{2\Omega} \dfrac{|\hat{f}(x)|^{p'}}{\omega_{2\Omega}(x)}dx\right)^{1/p} < \infty,
		\]
		and thus, by the preceding estimates, $\Ha_\phi \in S^p$, and there is $\psi = \psi_\phi \in L^\infty$ such that $\widehat{\psi} = \hat{\phi}$ in $2\Omega$ and
		\[
		\|\psi\|_{L^\infty} \lesssim \left(\int_{2\Omega} \dfrac{|\hat{f}(x)|^{p'}}{\omega_{2\Omega}(x)}dx\right)^{1/p}.
		\]
		On the other hand,
		\[
		\int_{2\Omega} \dfrac{|\hat{f}(x)|^{p'}}{\omega_{2\Omega}(x)}dx = \langle f,\psi\rangle\ \leq \|f\|_{L^1}\|\psi\|_{L^\infty}\lesssim \|f\|_{L^1}\left(\int_{2\Omega} \dfrac{|\hat{f}(x)|^{p'}}{\omega_{2\Omega}(x)}dx\right)^{1/p}. 
		\]
		This is the inequality we needed to show.
	\end{proof}
	
	\subsection{The Hilbert matrix and the Hardy inequality}
	
	We finish by discussing the endpoint $(p,q,d) = (1,1,1)$, which yields the Paley--Wiener analogues of both the classical Hardy inequality and the Hilbert matrix. 
	
	For a function $f(z) = \sum_{k \geq 0} \hat{f}(k) z^k \in H^1(\mathbb{D})$, the classical inequalities of Hardy and Carleman are
	\begin{equation*}
		\sum_{k\geq0}\frac{|\hat{f}(k)|}{k+1} \leq \pi \|f\|_{H^1},
		\qquad
		\sum_{k\geq0}\frac{|\hat{f}(k)|^2}{k+1} \leq \|f\|_{H^1}^2.
	\end{equation*}
	The Carleman inequality is the one-variable model for the $(2,1,1)$ Helson inequality considered in
	this paper; the analogy is exactly motivated by the equivalence of Helson's inequality and the $S^2$-Nehari theorem of Corollary~\ref{cor:S2Nehari}. The same sequence $((k+1)^{-1})_{k\geq0}$ also generates the classical Hilbert matrix
	\[
	H = \left(\frac{1}{m+n+1}\right)_{m,n\geq0}.
	\]
	This is not a coincidence. The quadratic form of the Hilbert matrix makes a direct connection with both the Hardy and the Carleman inequality:
	\[
	\sum_{m,n\geq0}\frac{\hat{f}(m) \hat{g}(n)}{m+n+1}
	=
	\sum_{k\geq0}\frac{\widehat{fg}(k)}{k+1}, \qquad f, g \in H^2(\mathbb{D}).
	\]
	
	In our setting, for a general set $\Omega$ the analogous $(1,1,1)$ Hardy-type inequality and the $(2,1,1)$ Helson inequality are, for $f\in\PW^{1}(\Omega)$,
	\[
	\int_\Omega \dfrac{|\hat{f}(x)|}{\omega_{\Omega}(x)}dx\leq C\|f\|_{L^1}, \qquad \int_\Omega \dfrac{|\hat{f}(x)|^2}{\omega_{\Omega}(x)}dx\leq C'\|f\|_{L^1}^2.
	\]
	The analogous operator to the Hilbert matrix is the integral operator $\mathcal{H}_\Omega : L^2(\Omega) \to L^2(\Omega)$ given by
	\[
	\mathcal{H}_\Omega(f)(x)=\int_{\Omega}\dfrac{f(y)}{\omega_{2\Omega}(x+y)} \, dy, \qquad f \in L^2(\Omega), \; x\in \Omega.
	\]
	The Hardy inequality, the Helson inequality, and the Hilbert matrix are again connected via the quadratic form of the Hilbert matrix,
	\begin{equation} \label{eq:hilbquad}
		\langle \mathcal{H}_\Omega \hat{f}, \overline{\hat{g}} \rangle = \int_{\Omega} \int_{\Omega}\dfrac{\hat{f}(x)\hat{g}(y)}{\omega_{2\Omega}(x+y)}\, dx   \, dy = \int_{2\Omega}\dfrac{\widehat{fg}(x)}{\omega_{2\Omega}(x)} \, dx, \qquad f,g \in \PW^2(\Omega).
	\end{equation}
	
	For the ball, $\Omega = B \subset \R^n$, $n \geq 2$, we have already shown that $\mathcal{H}_B$ is bounded. Indeed, in Proposition~\ref{prop:symbol-ball} we produced a symbol $\psi \in L^\infty$ such that $\hat{\psi}$ coincides with $\omega_B^{-1}$ on $B$ -- this is exactly how we proved Helson's inequality for the ball. On the other hand, in Theorem~\ref{Hardygate} we showed that the $(1,1,1)$ Hardy inequality fails for $B$. 
	
	For a general set $\Omega$, we of course have the implications $(1)\Rightarrow (2) \Rightarrow (3)$ among these properties:
	\begin{enumerate}
		\item Hardy's inequality holds, that is, the $(1,1,1)$ boundary-weighted Fourier inequality holds for $\Omega$.
		\item There is $\psi \in L^\infty$ such that $\hat{\psi}|_{\Omega} = \omega_\Omega^{-1}$ as distributions in $\Omega$.
		\item The Hilbert matrix $\mathcal{H}_\Omega \colon L^2(\Omega) \to L^2(\Omega)$ is bounded.
	\end{enumerate}
	
	The next lemma shows that, under Nehari's theorem, these properties are equivalent.
	\begin{lm} \label{lem:hardyineqandhilbert}
		Suppose that the full Nehari theorem holds: every bounded Hankel operator $\Ha_\phi \colon \PW^2(\Omega) \to \PW^2(\Omega)$ is generated by a bounded symbol. Then the $(1,1,1)$ Hardy inequality holds for $\Omega$ if and only if $\mathcal{H}_\Omega \colon L^2(\Omega) \to L^2(\Omega)$ is bounded.
	\end{lm}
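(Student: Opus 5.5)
The forward direction follows from the chain (1)$\Rightarrow$(2)$\Rightarrow$(3) recorded just before the statement. Concretely, if Hardy's inequality holds, then $f\mapsto\int_\Omega \hat f\,\omega_\Omega^{-1}\,dx$ is a bounded functional on $\PW^1(\Omega)\subset L^1$. Hahn--Banach then produces $\psi\in L^\infty$ with $\hat\psi|_\Omega=\omega_\Omega^{-1}$. For $\hat f,\hat g\in C_c^\infty(\Omega)$, the quadratic form identity \eqref{eq:hilbquad}, rescaled to $2\Omega$, gives
\[
|\langle \mathcal H_\Omega\hat f,\overline{\hat g}\rangle|
=\Bigl|\int_{2\Omega}\frac{\widehat{fg}}{\omega_{2\Omega}}\Bigr|
\lesssim \|fg\|_{L^1}\le \|f\|_{L^2}\|g\|_{L^2}.
\]
Here we use that $\omega_{2\Omega}^{-1}$ also has a bounded symbol (dilate $\psi$) and that $fg\in\PW^1(2\Omega)$. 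By Plancherel and density, $\mathcal H_\Omega$ is bounded.

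For the converse, I would assume $\mathcal H_\Omega$ is bounded and invoke Nehari. The plan is to view, for each unimodular measurable function $u$ on $\Omega$ with $|u|\equiv1$, the symbol
\[
\hat\phi_u(x)=\frac{u(x/2)}{\omega_{2\Omega}(x)},\qquad x\in 2\Omega,
\]
which is chosen so that the pairing in \eqref{eq:hilbquad} against $\widehat{fg}$ reproduces $\int |\hat h|/\omega$ after choosing $u=\overline{\operatorname{sgn}\hat h}$. The difficulty is that a Hankel operator with this symbol must be bounded uniformly in $u$. Since $|\hat\phi_u(x+y)|=1/\omega_{2\Omega}(x+y)$, the Hankel kernel is dominated pointwise by the positive kernel of $\mathcal H_\Omega$. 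Thus, by Schur domination of a kernel by a positive bounded kernel,
\[
\|\Ha_{\phi_u}\|\le\|\mathcal H_\Omega\|,
\]
uniformly in $u$. By the closed graph formulation of Nehari's theorem (as in the proof of Proposition~\ref{prop:Nehari-implies-Hardy}), there is then $\psi_u\in L^\infty$ with $\hat\psi_u=\hat\phi_u$ on $2\Omega$ and $\|\psi_u\|_\infty\le C\|\mathcal H_\Omega\|$.

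Now take $h\in\mathcal F^{-1}C_c^\infty(2\Omega)$ and set $u(x/2)=\overline{\operatorname{sgn}\hat h(x)}$. Then
\[
\int_{2\Omega}\frac{|\hat h|}{\omega_{2\Omega}}\,dx=\langle h,\psi_u\rangle\le C\|h\|_{L^1},
\]
which is Hardy's inequality for $2\Omega$. An affine rescaling, together with the density result Lemma~\ref{compactdense}, transfers this to $\Omega$ and all of $\PW^1(\Omega)$.

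The main obstacle is the domination step $\|\Ha_{\phi_u}\|\le\|\mathcal H_\Omega\|$. The cleanest route is to note that $|\langle \Ha_{\phi_u}\hat f,\hat g\rangle|\le\langle\mathcal H_\Omega|\hat f|,|\hat g|\rangle$ by the triangle inequality, since $\mathcal H_\Omega$ has a nonnegative kernel. Some care is needed with the normalization $x\mapsto x+y$ versus $(x+y)/2$ in the definition of $\Ha_\phi$. One must also check that $\phi_u$ is a legitimate distribution symbol on $2\Omega$; this holds because it is locally bounded there.
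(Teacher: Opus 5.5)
Your proof is correct, and it is essentially the dual of the paper's argument.

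\textbf{The paper's route.} The paper invokes the weak factorization reformulation of Nehari's theorem \cite[Proposition~5.1]{MR4227573}. Every $f\in\PW^1(2\Omega)$ is written as $\sum_j g_jh_j$ with $g_j,h_j\in\PW^2(\Omega)$ and $\sum_j\|g_j\|_{L^2}\|h_j\|_{L^2}\lesssim\|f\|_{L^1}$. The phases are then removed on the factor side: $|\hat g_j*\hat h_j|\le|\hat g_j|*|\hat h_j|=\widehat{G_jH_j}$. Finally $|\hat f|$ is fed into the positive quadratic form \eqref{eq:hilbquad} of $\mathcal H_\Omega$.

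\textbf{Your route.} You stay on the symbol side. You apply Nehari's theorem, with the uniform constant from the closed graph/open mapping argument, to the phase-modulated symbols $\overline{\operatorname{sgn}\hat h}/\omega_{2\Omega}$. Their Hankel kernels are dominated pointwise by the kernel of $\mathcal H_\Omega$, which gives $\|\Ha_{\phi_u}\|\le\|\mathcal H_\Omega\|$ uniformly in the phase. Pairing the resulting bounded symbol with $h$ then yields Hardy's inequality on $2\Omega$.

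\textbf{Comparison.} Both arguments use the same fact: the kernel $\omega_{2\Omega}(x+y)^{-1}$ is positive, so phases can be absorbed, either into the factors or into the symbol. Both also need Nehari's theorem with a uniform constant. The cited factorization packages that constant; you extract it directly. Your version is more self-contained, since it avoids the factorization theorem and its infinite sums. The paper's version is shorter given the citation, and it never has to construct symbols.

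\textbf{A minor point.} In the forward direction, your Hahn--Banach detour through a bounded symbol for $\omega_\Omega^{-1}$ is harmless but unnecessary. Applying Hardy's inequality for $2\Omega$ (which is affine invariant) directly to $|\widehat{fg}|$ already gives $|\langle\mathcal H_\Omega\hat f,\overline{\hat g}\rangle|\lesssim\|fg\|_{L^1}\le\|f\|_{L^2}\|g\|_{L^2}$, which is what the paper does.
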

	\begin{proof}
		The Hardy inequality immediately implies that the Hilbert matrix is bounded, by \eqref{eq:hilbquad}. For the other direction suppose that $\mathcal{H}_\Omega$ is bounded. The key is that Nehari's theorem is equivalent to a weak factorization statement, see \cite[Proposition~5.1]{MR4227573}. More precisely, Nehari's theorem holds if and only if there is $C>0$ such that every $f\in \PW^1(2\Omega)$ can be written as a sum $f=\sum_j g_j h_j$, where $g_j,h_j\in \PW^2(\Omega)$ and $\|f\|_{L^1} \leq \sum_j \|g_j\|_{L^2}\|h_j\|_{L^2}\leq C\|f\|_{L^1}$. Therefore, if we introduce new functions $G_j, H_j \in \PW^2(\Omega)$ such that $\widehat{G_j}=|\hat{g}_j|$ and $\widehat{H_j}=|\hat{h}_j|$, we have that
		\[
		|\hat{f}| = \bigl| \sum_j \hat{g}_j * \hat{h}_j \bigr| \leq \sum_j \widehat{G_j H_j}.
		\]
		Therefore, applying that the quadratic form \eqref{eq:hilbquad} of $\mathcal{H}_\Omega$ is bounded, we obtain
		$$\int_{2\Omega}\dfrac{|\hat{f}(x)|}{\omega_{2\Omega}(x)}dx\leq \sum_j\int_{2\Omega}\dfrac{\widehat{G_j H_j}(x)}{\omega_{2\Omega}(x)}dx \lesssim \sum_j \|G_j\|_{L^2}\|H_j\|_{L^2}\lesssim \|f\|_{L^1}.$$
		That is, the Hardy inequality holds.
	\end{proof}
	
	In particular, we have arrived at yet another proof that Nehari's theorem must fail for the ball, since Hardy's inequality fails, but $\mathcal{H}_B$ is bounded. See \cite{MR4502777, MR4700194} for other proofs of this result. 
	
	\begin{cor}
		Nehari's theorem fails for balls in $\mathbb{R}^n$, $n\geq 2$.
	\end{cor}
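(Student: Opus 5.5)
The statement is a corollary: combine Lemma~\ref{lem:hardyineqandhilbert} with two facts already proved about the ball, and argue by contradiction. I assume that the full Nehari theorem holds for the unit ball $B\subset\R^n$, $n\geq 2$. Since Nehari's theorem and all the properties involved are invariant under affine bijections (translations and dilations of $\Omega$ only rescale $\omega_\Omega$ and conjugate the relevant operators), it suffices to treat the unit ball.

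The first ingredient is the boundedness of the Hilbert matrix operator $\mathcal{H}_B$ on $L^2(B)$. Proposition~\ref{prop:symbol-ball} gives $\psi\in L^\infty$ with $\hat\psi|_B=\omega_B^{-1}$. Rescaling to $2B$, there is $\psi_2\in L^\infty$ with $\hat\psi_2|_{2B}=\omega_{2B}^{-1}$. For $f,g\in\mathcal F^{-1}C_c^\infty(B)$, the identity \eqref{eq:hilbquad} then gives
\[
\Bigl|\int_{2B}\frac{\widehat{fg}(x)}{\omega_{2B}(x)}\,dx\Bigr| = |\langle fg,\psi_2\rangle| \leq \|\psi_2\|_{L^\infty}\|fg\|_{L^1} \leq \|\psi_2\|_{L^\infty}\|f\|_{L^2}\|g\|_{L^2},
\]
which is exactly property (2)$\Rightarrow$(3) in the list preceding Lemma~\ref{lem:hardyineqandhilbert}. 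By density this shows $\mathcal{H}_B\colon L^2(B)\to L^2(B)$ is bounded.

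Next, under the Nehari assumption, Lemma~\ref{lem:hardyineqandhilbert} upgrades the boundedness of $\mathcal H_B$ to the $(1,1,1)$ Hardy inequality for $B$. This is precisely the critical diagonal case $(q,q,d_c)$ with $q=1$ and $d_c(1,1)=1$. Theorem~\ref{Hardygate} (equivalently Theorem~\ref{Hardynegate-intro}) asserts that this inequality fails for the ball, which is the desired contradiction. Hence Nehari's theorem fails for $B$.

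The only step needing care is bookkeeping rather than a genuine obstacle. Lemma~\ref{lem:hardyineqandhilbert} is stated with the weight $\omega_{2\Omega}$ on $2\Omega$, so the Hardy inequality it produces lives on $2B$ with weight $\omega_{2B}$. This must be transferred back to the $(1,1,1)$ inequality on $B$ (or on any ball) by the dilation $f\mapsto f(2\,\cdot)$. Under this dilation $\omega_{2B}(2x)=2^n\omega_B(x)$, and both sides of the inequality scale compatibly. Theorem~\ref{Hardygate}, whose proof via Kakeya sets works for any ball, then applies.
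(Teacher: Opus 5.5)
Your proposal is correct and is exactly the paper's argument: boundedness of $\mathcal{H}_B$ (from the $L^\infty$ symbol of Proposition~\ref{prop:symbol-ball}), Lemma~\ref{lem:hardyineqandhilbert} applied under the Nehari hypothesis, and the failure of the $(1,1,1)$ Hardy inequality from Theorem~\ref{Hardygate} together give the contradiction. Your explicit rescaling between $B$ and $2B$ via $\omega_{2B}(2x)=2^n\omega_B(x)$ fills in bookkeeping that the paper leaves implicit.
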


	\bibliographystyle{plain}
	\bibliography{Helson}
	
\end{document}